\documentclass[11pt, a4paper, reqno]{amsart}

\usepackage{amsfonts,amsmath,amsthm}
\usepackage{amssymb}
\usepackage{latexsym}
\usepackage{calrsfs}

\usepackage[english]{babel}

\numberwithin{equation}{section}

\newtheorem{theorem}{Theorem}[section]
\newtheorem{proposition}[theorem]{Proposition}
\newtheorem{lemma}[theorem]{Lemma}
\newtheorem{corollary}[theorem]{Corollary}
\newtheorem{thm}[theorem]{Theorem}
\newtheorem{cor}[theorem]{Corollary}
\newtheorem{prop}[theorem]{Proposition}
\newtheorem{lem}[theorem]{Lemma}

\theoremstyle{definition}

\newtheorem{definition}[theorem]{Definition}
\newtheorem{defn}[theorem]{Definition}
\newtheorem{example}[theorem]{Example}

\theoremstyle{remark}

\newtheorem{remark}[theorem]{Remark}
\newtheorem{rem}[theorem]{Remark}

\newcommand{\eps}{\varepsilon}

\newcommand{\R}{{\mathbb R}}

\newcommand{\Id}{\mathrm{Id}}
\newcommand{\C}{\mathbb{C}}
\newcommand{\K}{\mathbb{K}}

\newcommand{\calS}{\mathcal{S}}

\newcommand{\loglike}[1]{\mathop{\rm #1}\nolimits}

\newcommand{\supp}{\loglike{supp}}

\newcommand{\re}{\loglike{Re}}
\newcommand{\im}{\loglike{Im}}

\arraycolsep 2pt

%
%

\newcounter{abc}   
\newcounter{iiiii} 

\newenvironment{aequivalenz}
{\setcounter{iiiii}{0}
\begin{list}%
{{\rm (\roman{iiiii})}}
{\usecounter{iiiii}
\parsep=0pt plus 1pt
\topsep=1pt plus 2pt minus 1pt
\itemsep=1pt plus 2pt minus 1pt
\leftmargin=3\baselineskip \labelsep=.6\baselineskip
\labelwidth=2.4\baselineskip
\rightmargin 0pt}%
}
{\end{list}}

\newenvironment{statements}%
{\setcounter{abc}{0}
\begin{list}%
{{\rm (\alph{abc})}}
{\usecounter{abc}
\parsep=0pt plus 1pt
\topsep=1pt plus 2pt minus 1pt
\itemsep=1pt plus 2pt minus 1pt
\leftmargin=3\baselineskip \labelsep=.6\baselineskip
\labelwidth=2.4\baselineskip
\rightmargin 0pt}%
}
{\end{list}}

{\begin{list}%
{$\bullet$}
{\leftmargin=3\baselineskip
\labelsep=\baselineskip \labelwidth=2.5\baselineskip
\parsep=0pt plus 1pt
\topsep=1pt plus 2pt minus 1pt
\itemsep=1pt plus 2pt minus 1pt
\rightmargin 0pt}%
}
{\end{list}}


\newcommand{\bea}{\begin{eqnarray*}}
\newcommand{\eea}{\end{eqnarray*}}
\newcommand{\beq}{\begin{equation}}
\newcommand{\eeq}{\end{equation}}
\newcommand{\begsta}{\begin{statements}}
\def\endsta{\end{statements}}
\newcommand{\begaeq}{\begin{aequivalenz}}
\def\endaeq{\end{aequivalenz}}

\def\DP{Daugavet property}


\begin{document}

\title{The Daugavet equation for bounded vector valued functions}

\author{Stefan Brach, Enrique A. S\'anchez P\'erez and Dirk Werner}
\date{\today. \textit{File}: \texttt{\jobname.tex}}

\subjclass[2000]{Primary 46B04; secondary  46B25, 46B80}

\keywords{Daugavet property, Daugavet equation, slice, slice
  continuity, alternative Daugavet equation, Daugavet centre, 
  nonlinear maps}  

\thanks{The authors acknowledge with thanks the
support of the Ministerio de  Econom\'{\i}a y Competitividad
(Spain) under the research project MTM2012-36740-C02-02.}

\address{Department of Mathematics, Freie Universit\"at Berlin,
Arnimallee~6, \qquad {}\linebreak D-14\,195~Berlin, Germany}
\email{brach.stefan@gmail.com}

\address{Instituto
Universitario de Matem\'atica Pura y Aplicada,  Universidad
\linebreak
Polit\'ecnica de Valencia, Camino de Vera s/n, 46022 Valencia, Spain.}
\email{easancpe@mat.upv.es}

\address{Department of Mathematics, Freie Universit\"at Berlin,
Arnimallee~6, \qquad {}\linebreak D-14\,195~Berlin, Germany}
\email{werner@math.fu-berlin.de}

\begin{abstract}
Requirements under which the Daugavet equation and the alternative
Daugavet equation hold for pairs of nonlinear maps between Banach
spaces are analysed. A geometric description is given in
terms of nonlinear slices. 
Some local versions of these properties are also introduced
and studied, as well as  tests for checking if the required
conditions are satisfied in relevant cases. 
\end{abstract}

\maketitle

\thispagestyle{empty}


\section{Introduction}

I.K.~Daugavet \cite{dauga} proved his eponymous equation in 1963 which
establishes the norm identity  
$$
\|\Id + T\| = 1 + \|T\|
$$
for a compact linear operator $T: C[0,1] \to C[0,1]$. This equation
was extended to more general classes of linear operators on various
spaces over the years. Nowadays investigations on this topic build on
the approach of V.~Kadets et al.\ \cite{ams2000} who defined a Banach
space $X$ to have the \DP\ if all rank-$1$ operators on $X$ satisfy
the Daugavet equation. This property can conveniently be characterised
in terms of slices of the unit ball, and it can be shown that on a
space with the \DP\ all weakly compact operators and all operators not
fixing a copy of $\ell_1$ satisfy the Daugavet
equation; see \cite{AAbook}, \cite{ams2000}, \cite{studia2001} or
\cite{Dirk-IrBull}.   

The Daugavet equation has been extended in a number of other ways as
well, replacing 
the identity operator by a more general reference operator called a
Daugavet centre (\cite{bosen}, \cite{boseka}) or replacing the linear
operators $T$ by nonlinear ones (\cite{pol}, \cite{polL1},
\cite{KMMW-Lip}). 
Here we attempt to combine both these ideas. We study the equation
$$
\|\Phi + \Psi\| = \|\Phi\| + \|\Psi\|
$$
where $ \Phi$ and $\Psi$ are bounded maps on the unit ball of some
Banach space $X$ having values in some (possibly different) Banach
space $Y$ and $\Psi$ is in some sense small with respect to $\Phi$,
the norm being the sup norm. Also, the so-called alternative Daugavet
equation 
$$
\max_{|\omega|=1} \|\Phi + \omega\Psi\| = \|\Phi\| + \|\Psi\|
$$
will be considered. We are going to investigate these equations by
means of suitable modifications of the notion of slice continuity
introduced in \cite{SPWFuncAp};  
cf.\ Definition~\ref{def:natural set of slices} below.
We also rely on some techniques from \cite{pol} and \cite{polL1}.

The paper is organised as follows. After the preliminary Section~2,
we study the $\Phi$-Daugavet   equation  in the third section, giving
complete characterisations using the notion of strong slice continuity
introduced below. Likewise, we introduce weak slice continuity in
order to deal with the alternative Daugavet equation in Section~4.  
Finally, Section~5 is devoted to some technical local versions of
these Daugavet type properties which  are obtained by considering
suitable subsets of the ones appearing in the definitions studied
before. Some tests that guarantee that the requirements in our main
theorems are satisfied  are also presented. In particular, examples
show their usefulness, especially for the cases of $C(K)$-spaces and
$L^1(\mu)$-spaces.

Let us introduce some fundamental definitions and notation. We will
write $\mathbb T$ for the set of scalars of modulus~$1$; the field of
scalars can be $\K=\R$ or $\K=\C$.  We write $\re \omega$ for the real
part, $\im \omega$ for the imaginary part  and $\overline{\omega}$ for
the complex conjugate of~$\omega$. For a Banach space $X$, $B_X$ is
its closed unit ball and $S_X$ its unit sphere, and we will denote by
$X^*$ its dual space.   
If $L$ is a Banach lattice, we use the symbol $L^+$ to denote the
positive cone, and $B_{L^+}$ for the set $B_L \cap L^+$.
$L(X,Y)$ denotes the space of continuous linear operators from $X$ to
$Y$. 

 For a bounded mapping $\Phi:B_X \to Y$, we define its  norm  to be
 the sup norm, i.e.,  
$$
\|\Phi\| := \sup_{x \in B_X} \|\Phi(x)\|;
$$
the space of all such mappings is denoted by $\ell_\infty(B_X, Y)$. In
the scalar case an element of $\ell_\infty(B_X)$ is typically denoted
by~$x'$. The symbol $x'\otimes y$ stands for the mapping $x\mapsto 
x'(x)y$. 

Our main characterizations are given in terms of slices. A slice 
$S(x^*, \varepsilon)$ of  $B_X$ 
defined by a norm one element $x^* \in X^*$ and an $\varepsilon>0$ 
is defined by
$$
S(x^*, \varepsilon)= \{ x \in B_X :  \re x^*(x) \ge 1- \varepsilon \}.
$$
When a nonlinear scalar-valued   function is considered, the same
definition makes sense; if $p:X \to \mathbb K$ is a function with norm
$\le1$, we write 
$$
S(p, \varepsilon)= \{ x \in B_X : \re p(x) \ge 1-  \varepsilon \}.
$$
Note that in this case it may happen that 
$S(p,\eps)=\emptyset$. 


\section{\label{chap:Preliminary-Generalities}Preliminaries}

In this section, we prove fundamental characterisations of the Daugavet
and the alternative Daugavet equation. The theorems in this section
are  adapted from
results in \cite{pol} and \cite{SPWFuncAp}.

\begin{defn}
Let $X,Y$ be Banach spaces and let $\Phi \in\ell_{\infty}(B_{X},Y)$.
We say that $\Psi\in\ell_{\infty}(B_{X},Y)$ satisfies the
\textit{$\Phi $-Daugavet equation} if the norm equality
\begin{equation}
\tag{$\Phi $-DE}
\|\Phi +\Psi\|=\|\Phi \|+\|\Psi\|
\end{equation}
holds. If $\Phi $ is the restriction of the identity to $B_X$, we call the above
equation the \textit{Daugavet equation} (DE).
\end{defn}

To connect the Daugavet equation to a set $V\subset\ell_{\infty}(B_{X},Y)$,
we establish the following terminology.

\begin{defn}
Let $X,Y$ be Banach spaces and let $\Phi \in\ell_{\infty}(B_{X},Y)$.
\begin{enumerate}
\item[(1)]
$Y$ has the \textit{$\Phi $-Daugavet property with respect to
$V\subset\ell_{\infty}(B_{X},Y)$}
if ($\Phi $-DE) is satisfied by all $\Psi\in V$.
\item[(2)]
$Y$ has the \textit{$\Phi $-Daugavet property} if
$\|\Phi +R\|=\|\Phi \|+\|R \|$
for all $R \in L(X,Y)$ with one-dimensional range.
\item[(3)]
$Y$ has the \textit{Daugavet property} if (2) holds for $X=Y$ and
$\Phi =\Id$.
\end{enumerate}
\end{defn}

The following lemma (see e.g.\ 
\cite[Lemma~11.4]{AAbook} or
\cite{Dirk-IrBull} for a proof)
frequently simplifies proofs concerning the Daugavet
equation, because we only need to consider maps of norm~$1$. We will
often make use of the lemma without explicitly mentioning it.

\begin{lem}
\label{lem:betaT}
Two
vectors $u$ and $v$ in a normed space satisfy $\|u+v\|=\|u\|+\|v\|$
if and only if $\|\alpha u+\beta v\|=\alpha\|u\|+\beta\|v\|$ holds
for all $\alpha,\beta\geq0$. In particular, $\Psi$ satisfies
$\mathrm{(}\Phi $-$\mathrm{DE)}$
if and only if $\alpha\Psi$ satisfies $\mathrm{(}\beta \Phi $-$\mathrm{DE)}$
for all $\alpha,\beta\geq0$.
\end{lem}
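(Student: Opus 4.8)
The plan is to prove the two-vector statement first and then obtain the ``in particular'' clause as an immediate special case. The implication from the parametrized family of equalities to the single one is trivial (take $\alpha=\beta=1$), so the content lies in the converse. Assume $\|u+v\|=\|u\|+\|v\|$. For arbitrary $\alpha,\beta\ge0$ the triangle inequality already gives $\|\alpha u+\beta v\|\le\alpha\|u\|+\beta\|v\|$, so it suffices to establish the reverse inequality. Since the assertion is symmetric in the pairs $(\alpha,u)$ and $(\beta,v)$, and is trivial when $\alpha=\beta=0$, I may assume $0\le\alpha\le\beta$ with $\beta>0$.

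The key step is to use the algebraic identity $\alpha u+\beta v=\beta(u+v)-(\beta-\alpha)u$. Applying the reverse triangle inequality and then the hypothesis,
\[
\|\alpha u+\beta v\|\ \ge\ \beta\|u+v\|-(\beta-\alpha)\|u\|\ =\ \beta\bigl(\|u\|+\|v\|\bigr)-(\beta-\alpha)\|u\|\ =\ \alpha\|u\|+\beta\|v\|,
\]
which is exactly what is needed; combined with the triangle inequality this yields $\|\alpha u+\beta v\|=\alpha\|u\|+\beta\|v\|$.

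For the ``in particular'' part I would simply observe that $\ell_{\infty}(B_X,Y)$, equipped with the sup norm, is itself a normed space, and that $\|\beta\Phi+\alpha\Psi\|$ is precisely the norm of the vector $\beta\Phi+\alpha\Psi$ in that space. Hence the equivalence just proved, applied with $u=\Phi$ and $v=\Psi$ and with the roles of $\alpha$ and $\beta$ interchanged, says that $\|\Phi+\Psi\|=\|\Phi\|+\|\Psi\|$ holds if and only if $\|\beta\Phi+\alpha\Psi\|=\beta\|\Phi\|+\alpha\|\Psi\|$ for all $\alpha,\beta\ge0$, and the latter is by definition the statement that $\alpha\Psi$ satisfies the $\beta\Phi$-Daugavet equation. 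No real obstacle arises: the only points requiring a moment's care are choosing the useful decomposition $\alpha u+\beta v=\beta(u+v)-(\beta-\alpha)u$ rather than an unhelpful one, and reducing to the ordered case $\alpha\le\beta$ by symmetry.
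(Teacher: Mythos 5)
Your argument is correct and complete: the decomposition $\alpha u+\beta v=\beta(u+v)-(\beta-\alpha)u$ together with the reverse triangle inequality is exactly the standard proof, and the ``in particular'' clause does follow by viewing $\ell_\infty(B_X,Y)$ with the sup norm as a normed space. The paper itself omits the proof and refers to the literature, where essentially this same argument is given, so there is nothing to add.
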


To prove the first theorem of this section, we need the following
lemma.

\begin{lem}
\label{lem:x in S(x',epsilon) implies |1-x'x| leq 2epsilon stefan}
Let
$X$ be a Banach space and assume $x'\in\ell_{\infty}(B_{X})$
with $\|x'\|\leq1$. Let $0\leq\varepsilon\leq1$ and $x\in B_{X}$.
Then $\re x'(x)\geq1-\varepsilon$ implies
$|1-x'(x)|\leq\sqrt{2\varepsilon}$.
\end{lem}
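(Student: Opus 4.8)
The statement is a purely two-dimensional (indeed one-complex-variable) computation once we unwind what the hypotheses give us. We know $|x'(x)|\le\|x'\|\le 1$ and $\re x'(x)\ge 1-\eps$. Write $z=x'(x)\in\K$ and split into real and imaginary parts, $a=\re z$ and $b=\im z$. The plan is to bound $|1-z|^2=(1-a)^2+b^2$ from above and then take the square root.

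First I would record the two elementary consequences of the hypotheses: from $\re z\ge 1-\eps$ we get $(1-a)^2\le\eps^2\le\eps$ (using $0\le\eps\le 1$, so that $1-a$ lies in $[0,\eps]$), and from $|z|\le 1$ together with $a\ge 1-\eps$ we get $b^2=|z|^2-a^2\le 1-a^2=(1-a)(1+a)\le 2(1-a)\le 2\eps$. The only mild subtlety is making sure $1-a\ge 0$, i.e. that we may legitimately drop the square on $(1-a)^2\le\eps^2$ to get $1-a\le\eps$; this is exactly where $\eps\le 1$ is used, since $\re z\ge 1-\eps\ge 0$. If $\K=\R$ then $b=0$ and the bound is even cleaner.

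Then I would simply add: $|1-z|^2=(1-a)^2+b^2\le\eps^2+2\eps$. At this point one wants $\eps^2+2\eps\le 2\eps$, which is false, so a little more care is needed — in fact the bound one really wants comes from not throwing away information too early. Instead of $(1-a)^2\le\eps^2$, combine directly: $|1-z|^2=(1-a)^2+b^2\le(1-a)^2+(1-a)(1+a)=(1-a)\cdot 2=2(1-a)\le 2\eps$. Taking square roots gives $|1-x'(x)|\le\sqrt{2\eps}$, as claimed. So the clean route is: write $|1-z|^2=(1-a)^2+b^2$, bound $b^2\le 1-a^2$ via $|z|\le 1$, combine to get $|1-z|^2\le 2(1-a)$, and finish with $1-a\le\eps$.

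There is essentially no obstacle here; the only thing to watch is the order of estimates — bounding $(1-a)^2$ and $b^2$ separately and then summing is wasteful and does not give the stated constant, whereas keeping the common factor $(1-a)$ does. The hypothesis $\eps\le 1$ is needed only to guarantee $\re x'(x)\ge 0$ so that $1+a\le 2$ and $1-a=|1-a|\le\eps$.
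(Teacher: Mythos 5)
Your proof is correct and is essentially the same computation as the paper's: both decompose $|1-x'(x)|^2$ into $(1-\re x'(x))^2+(\im x'(x))^2$, bound the imaginary part via $|x'(x)|\le 1$, and use $\varepsilon\le 1$ to get $1-\re x'(x)\le\varepsilon$. The only cosmetic difference is that you factor out $(1-a)$ to reach $2(1-a)\le 2\varepsilon$ directly, whereas the paper bounds the two terms separately by $\varepsilon^{2}$ and $2\varepsilon-\varepsilon^{2}$, whose sum is still exactly $2\varepsilon$ — so your remark that summing separate bounds loses the constant applies only to the looser estimate $(\im x'(x))^{2}\le 2\varepsilon$, not to the paper's actual route.
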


\begin{proof}
First note that
\[
1  \geq  |x'(x)|^{2}
  =  \left(\im x'(x)\right)^{2}+\left(\re x'(x)\right)^{2}
  \geq  \left(\im x'(x)\right)^{2}+(1-\varepsilon)^{2}.
\]
Hence
\[
\left(\im x'(x)\right)^{2}  \leq  1-(1-\varepsilon)^{2}
  =  2\varepsilon-\varepsilon^{2}.
\]
Since $\re x'(x)\geq1-\varepsilon$ and $|x'(x)|\leq1$,
we know that $0\leq1-\re x'(x)\leq\varepsilon$. Thus
\begin{eqnarray*}
|1-x'(x)|^{2} & = & |1-\re x'(x)-i\im x'(x)|^{2}\\
 & = & (1-\re x'(x))^{2}+(\im x'(x))^{2}\\
 & \leq & \varepsilon^{2}+2\varepsilon-\varepsilon^{2}\\
 & = & 2\varepsilon,
\end{eqnarray*}
i.e., $\left|1-x'(x)\right|\leq\sqrt{2\varepsilon}$.
\end{proof}

\begin{thm}
\label{thm:generalization daugavet center one map}
Let $X,Y$ be Banach
spaces. Let $\Phi \in\ell_{\infty}(B_{X},Y)$ and consider a norm one
map $x'\in\ell_{\infty}(B_{X})$ and $y\in Y\backslash\{0\}$.
Then the following are equivalent:
\begin{enumerate}
\item[(1)]
$\|\Phi +x'\otimes y\|=\|\Phi \|+\|y\|$.
\item[(2)]
For every $\varepsilon>0$ there are $x\in B_{X}$ and
$\omega\in\mathbb{T}$ such that
\[
\re \omega x'(x)\geq1-\varepsilon\quad\mbox{and}\quad\left\Vert \omega
  \Phi (x)+\frac{y}{\|y\|}\right\Vert \geq\|\Phi \|+1-\varepsilon.
\]
\end{enumerate}
\end{thm}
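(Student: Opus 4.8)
\emph{Approach.} The plan is to normalise first and then establish the two implications by short perturbation estimates built on Lemma~\ref{lem:x in S(x',epsilon) implies |1-x'x| leq 2epsilon stefan}. By Lemma~\ref{lem:betaT} the validity of (1) is unaffected by replacing $y$ with $y/\|y\|$, and (2) only refers to $y/\|y\|$; so I would assume throughout that $\|y\|=1$, in which case (1) reads $\|\Phi+x'\otimes y\|=\|\Phi\|+1$. Since $\|\Phi(x)+x'(x)y\|\le\|\Phi(x)\|+|x'(x)|\,\|y\|\le\|\Phi\|+1$ for every $x\in B_X$, the inequality ``$\le$'' is automatic, so (1) amounts to: for each $\varepsilon>0$ there is $x\in B_X$ with $\|\Phi(x)+x'(x)y\|>\|\Phi\|+1-\varepsilon$. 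This reformulation is the bridge to (2).

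\emph{(2)$\Rightarrow$(1).} Given $\varepsilon\in(0,1]$, take $x,\omega$ from (2). The scalar $\omega x'(x)$ has modulus $\le\|x'\|=1$ and real part $\ge1-\varepsilon$, so Lemma~\ref{lem:x in S(x',epsilon) implies |1-x'x| leq 2epsilon stefan} (applied to $\omega x'$) yields $|1-\omega x'(x)|\le\sqrt{2\varepsilon}$. Then, multiplying by $|\omega|=1$ and using $\|y\|=1$,
\[
\|\Phi(x)+x'(x)y\|=\|\omega\Phi(x)+\omega x'(x)y\|\ge\|\omega\Phi(x)+y\|-|1-\omega x'(x)|\ge\|\Phi\|+1-\varepsilon-\sqrt{2\varepsilon},
\]
and letting $\varepsilon\downarrow0$ gives $\|\Phi+x'\otimes y\|\ge\|\Phi\|+1$; with the trivial reverse inequality this is (1).

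\emph{(1)$\Rightarrow$(2).} It suffices to treat small $\varepsilon$, say $\varepsilon\in(0,1]$. Set $\delta=\varepsilon/2$ and use the reformulation of (1) to pick $x\in B_X$ with $\|\Phi(x)+x'(x)y\|>\|\Phi\|+1-\delta$; since $\|\Phi(x)\|\le\|\Phi\|$ this forces $|x'(x)|>1-\delta>0$. Writing $x'(x)=|x'(x)|\mu$ with $\mu\in\mathbb{T}$ and putting $\omega=\overline{\mu}$, one gets $\re\omega x'(x)=|x'(x)|>1-\varepsilon$, and, multiplying by $|\omega|=1$ (which turns $\omega x'(x)y$ into $|x'(x)|y$),
\[
\|\omega\Phi(x)+y\|\ge\|\omega\Phi(x)+|x'(x)|y\|-(1-|x'(x)|)=\|\Phi(x)+x'(x)y\|-(1-|x'(x)|)>\|\Phi\|+1-2\delta,
\]
which is exactly the required estimate $\ge\|\Phi\|+1-\varepsilon$ (recall $y=y/\|y\|$).

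\emph{Main difficulty.} There is no deep obstacle; the only thing needing care is the bookkeeping around the unimodular rotation $\omega$ and matching up the $\varepsilon$'s. In (2)$\Rightarrow$(1) the loss $\sqrt{2\varepsilon}$ produced by Lemma~\ref{lem:x in S(x',epsilon) implies |1-x'x| leq 2epsilon stefan} disappears in the limit; in (1)$\Rightarrow$(2) one can rotate $x'(x)$ \emph{exactly} onto $|x'(x)|$, so the deviation is linear in $1-|x'(x)|<\delta$ and is swallowed by the choice $\delta=\varepsilon/2$.
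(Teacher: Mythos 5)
Your proposal is correct and follows essentially the same route as the paper's proof: normalise to $\|y\|=1$ via Lemma~\ref{lem:betaT}, choose the unimodular rotation $\omega=|x'(x)|/x'(x)$ in one direction and invoke Lemma~\ref{lem:x in S(x',epsilon) implies |1-x'x| leq 2epsilon stefan} in the other, with the same triangle-inequality bookkeeping. (Your use of the sharp bound $\sqrt{2\varepsilon}$ in (2)$\Rightarrow$(1) is in fact slightly more careful than the paper's stated ``$2\varepsilon$''.)
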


\begin{proof}
(1) $\Rightarrow$ (2):
By Lemma~\ref{lem:betaT},
we can assume $y\in S_{Y}$. Hence there is an element $x\in B_{X}$
such that
\begin{eqnarray*}
\|\Phi \|+1-\frac{\varepsilon}{2} & \leq & \|\Phi (x)+x'(x)y\|\\
 & \leq & \|\Phi (x)\|+|x'(x)|\|y\|\\
 & \leq & \|\Phi \|+|x'(x)|.
\end{eqnarray*}
Thus $|x'(x)|\geq1-\frac{\varepsilon}{2}$. Writing
$\omega=|x'(x)|/x'(x)\in\mathbb{T}$ we have
\[
\re \omega x'(x)=|x'(x)|\geq1-\varepsilon.
\]
Moreover
\begin{eqnarray*}
\|\Phi \|+1-\frac{\varepsilon}{2} & \leq & \|\Phi (x)+x'(x)y\|\\
 & = & \|\omega \Phi (x)+\omega x'(x)y\|\\
 & \leq & \|\omega \Phi (x)+y\|+\|\omega x'(x)y-y\|\\
 & = & \|\omega \Phi (x)+y\|+|\omega x'(x)-1|\|y\|\\
 & = & \|\omega \Phi (x)+y\|+|\mbox{ }|x'(x)|-1|\\
 & \leq & \|\omega \Phi (x)+y\|+\frac{\varepsilon}{2},
\end{eqnarray*}
and (2) follows.

(2) $\Rightarrow$ (1):
Again, by Lemma~\ref{lem:betaT},
it suffices to consider the case $\|y\|=1$. Let $\varepsilon>0$
and take $x\in B_{X}$ and $\omega\in\mathbb{T}$ such that
\[
\re \omega x'(x)\geq1-\varepsilon\quad\mbox{and}\quad\|\omega \Phi
(x)+y\|\geq\|\Phi \|+1-\varepsilon.
\]
Thus
\begin{eqnarray*}
\|\Phi \|+1-\varepsilon & \leq & \|\omega \Phi (x)+y\|\\
 & = & \|\Phi (x)+\overline{\omega}y\|\\
 & \leq & \|\Phi (x)+x'(x)y\|+\|\overline{\omega}y-x'(x)y\|\\
 & = & \|\Phi (x)+x'(x)y\|+\|y-\omega x'(x)y\|\\
 & = & \|\Phi (x)+x'(x)y\|+|1-\omega x'(x)|\\
 & \leq & \|\Phi (x)+x'(x)y\|+2\varepsilon,
\end{eqnarray*}
where the last inequality is due to Lemma~\ref{lem:x in S(x',epsilon)
  implies |1-x'x| leq 2epsilon stefan}.
Since $\varepsilon$ was arbitrary, (1) holds.
\end{proof}

Next we present analogous results in the setting of the alternative
Daugavet equation.

\begin{defn}
Let $X,Y$ be Banach spaces and $\Phi ,\Psi\in\ell_{\infty}(B_{X},Y)$.
We say that $\Psi$ satisfies the
\textit{alternative $\Phi $-Daugavet equation}
if
\begin{equation}
\tag{$\Phi $-ADE}
\max_{|\omega|=1}\|\Phi +\omega\Psi\|=\|\Phi \|+\|\Psi\|
\end{equation}
is true. In the case where $\Phi $ is the identity, we refer to the above
equation simply as the alternative Daugavet equation (ADE).
\end{defn}

We will also make use of the following definitions regarding a set
$V\subset\ell_{\infty}(B_{X},Y)$.

\begin{defn}
Let $X,Y$ be Banach spaces and let $\Phi \in\ell_{\infty}(B_{X},Y)$.
\begsta
\item
$Y$ has the \textit{alternative $\Phi $-Daugavet property with respect
to $V\subset\ell_{\infty}(B_{X},Y)$} if ($\Phi $-ADE) is satisfied by
all $\Psi\in V$.
\item
$Y$ has the \textit{alternative $\Phi $-Daugavet property} if
$\max_{|\omega|=1}\|\Phi +\omega R \|=\|\Phi \|+\|R\|$
for all $R \in L(X,Y)$ with one-dimensional range.
\item
$Y$ has the \textit{alternative Daugavet property} if it has the alternative
$\Id$-\DP.
\endsta
\end{defn}

Now that the notation is fixed, let us look at how the Daugavet and
the alternative Daugavet equation are interrelated.

\begin{rem}
\label{rem:remark ade}
Let $X,Y$ be Banach spaces and $\Phi, \Psi \in\ell_{\infty}(B_{X},Y)$.
\begin{enumerate}
\item[(1)]
$\Psi$ satisfies ($\Phi $-ADE) if and only if there exists
$\omega\in\mathbb{T}$ such that $\omega\Psi$ fulfills ($\Phi $-DE).
\item[(2)]
($\Phi $-DE) implies ($\Phi $-ADE), but, in general, the converse
is not true. For example, $-\mathrm{Id}$ always satisfies (ADE),
but never (DE).
\item[(3)]
$\Psi$ satisfies ($\Phi $-$A\mathrm{DE}$) if and only if
$\alpha\Psi$ satisfies ($\beta \Phi $-$A\mathrm{DE}$) for every
$\alpha,\beta\geq0$.
This is a consequence of (1) and Lemma~\ref{lem:betaT}.
\end{enumerate}
\end{rem}

\begin{thm}
\label{thm:generalization alternative daugavet center one map}
Let
$X,Y$ be Banach spaces. Let $\Phi \in\ell_{\infty}(B_{X},Y)$ and consider
a norm one map $x'\in\ell_{\infty}(B_{X})$ and $y\in Y\backslash\{0\}$.
Then the following are equivalent:
\begin{enumerate}
\item[(1)]
$\max_{|\omega|=1}\|\Phi +\omega x'\otimes y\|=\|\Phi \|+\|y\|$.
\item[(2)]
For every $\varepsilon>0$ there exist $\omega_{1},\omega_{2}\in\mathbb{T}$
and $x\in B_{X}$ such that
\[
\re \omega_{1}x'(x)\geq1-\varepsilon\quad\mbox{and}\quad\left\Vert
  \omega_{2}\Phi (x)+\frac{y}{\|y\|}\right\Vert \geq\|\Phi
\|+1-\varepsilon.
\]
\item[(3)]
For every $\varepsilon>0$ there exist $\omega\in\mathbb{T}$
and $x\in B_{X}$ such that
\[
\left|x'(x)\right|\geq1-\varepsilon\quad\mbox{and}\quad\left\Vert \omega
  \Phi (x)+\frac{y}{\|y\|}\right\Vert \geq\|\Phi \|+1-\varepsilon.
\]
\end{enumerate}
\end{thm}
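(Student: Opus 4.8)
The plan is to prove the triangle of equivalences by routing everything through Theorem~\ref{thm:generalization daugavet center one map} and part~(1) of Remark~\ref{rem:remark ade}, which together already relate the alternative $\Phi$-Daugavet equation for $x'\otimes y$ to the ordinary $\Phi$-Daugavet equation for the twisted map $\omega(x'\otimes y)=(\omega x')\otimes y$. The key observation is that $\omega x'$ is again a norm one scalar map on $B_X$, so Theorem~\ref{thm:generalization daugavet center one map} applies to it verbatim, and its condition~(2) for $(\omega x')\otimes y$ reads: for every $\varepsilon>0$ there are $x\in B_X$ and $\eta\in\T$ with $\re\eta\omega x'(x)\ge1-\varepsilon$ and $\|\eta\Phi(x)+y/\|y\|\|\ge\|\Phi\|+1-\varepsilon$. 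Absorbing $\eta\omega$ into a single unimodular scalar $\omega_1$ and renaming $\eta=\omega_2$ is exactly statement~(2) of the present theorem, so the equivalence (1)$\Leftrightarrow$(2) will follow by a short bookkeeping argument.

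For (2)$\Leftrightarrow$(3) I would argue directly. The implication (2)$\Rightarrow$(3) is immediate: if $\re\omega_1 x'(x)\ge1-\varepsilon$ then $|x'(x)|\ge|\omega_1 x'(x)|\ge\re\omega_1 x'(x)\ge1-\varepsilon$, and we keep the same $x$ and set $\omega=\omega_2$. Conversely, given $x$ and $\omega$ with $|x'(x)|\ge1-\varepsilon$ and $\|\omega\Phi(x)+y/\|y\|\|\ge\|\Phi\|+1-\varepsilon$, choose $\omega_1\in\T$ so that $\omega_1 x'(x)=|x'(x)|$ is real and nonnegative; then $\re\omega_1 x'(x)=|x'(x)|\ge1-\varepsilon$, and taking $\omega_2=\omega$ gives~(2). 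Both directions use only the elementary fact that every nonzero complex number has a unimodular multiple making it real and nonnegative, together with $|z|\le1$ forcing $\re z\le|z|$; no appeal to Lemma~\ref{lem:x in S(x',epsilon) implies |1-x'x| leq 2epsilon stefan} is needed here, since we are comparing $|x'(x)|$ with $\re\omega_1 x'(x)$ rather than with $1$.

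The only point requiring a little care is the passage (1)$\Leftrightarrow$(2) via Remark~\ref{rem:remark ade}(1): that remark says $x'\otimes y$ satisfies ($\Phi$-ADE) iff some $\omega\in\T$ makes $\omega(x'\otimes y)$ satisfy ($\Phi$-DE), and one must check that $\omega(x'\otimes y)=(\omega x')\otimes y$ as maps $B_X\to Y$ and that $\|\omega x'\|=\|x'\|=1$, so that Theorem~\ref{thm:generalization daugavet center one map} is genuinely applicable with $x'$ replaced by $\omega x'$ and the same $y$. After that substitution, condition~(2) of Theorem~\ref{thm:generalization daugavet center one map} — namely the existence for each $\varepsilon$ of $x$ and $\eta$ with $\re\eta\omega x'(x)\ge1-\varepsilon$ and $\|\eta\Phi(x)+y/\|y\|\|\ge\|\Phi\|+1-\varepsilon$ — must be quantified correctly: in~(1) the scalar $\omega$ is a single fixed witness, whereas in~(2) the scalar $\omega_1$ is allowed to depend on $\varepsilon$, so one direction needs the trivial remark that a fixed $\omega$ is in particular an admissible $\varepsilon$-dependent choice, and the other direction uses a compactness-free diagonal-type argument: if for each $\varepsilon$ some $\omega_1^{(\varepsilon)}$ works, one does not immediately get a single $\omega$, but this is precisely where Remark~\ref{rem:remark ade}(1) rescues us, because ($\Phi$-ADE) only asserts existence of \emph{some} $\omega$ realising the maximum, and that maximum is attained (the map $\omega\mapsto\|\Phi+\omega(x'\otimes y)\|$ is continuous on the compact set $\T$). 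I expect the bookkeeping of these quantifiers — keeping straight when a unimodular scalar is fixed versus $\varepsilon$-dependent, and invoking compactness of $\T$ to collapse the family $\{\omega_1^{(\varepsilon)}\}$ to a single witness for~(1) — to be the main (and essentially only) obstacle; the rest is the elementary algebra of moduli described above.
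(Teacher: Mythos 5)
Your proposal is correct, and your implications (1)$\Rightarrow$(2) and (2)$\Rightarrow$(3) coincide with the paper's. Where you diverge is in closing the equivalence: the paper proves (3)$\Rightarrow$(1) directly by setting $\omega_{1}=|x'(x)|/x'(x)$ and $\omega_{2}=\overline{\omega}\omega_{1}$ and estimating $\|\Phi(x)+\omega_{2}x'(x)y\|\geq\|\Phi\|+1-2\varepsilon$; since this lower-bounds $\max_{|\omega|=1}\|\Phi+\omega x'\otimes y\|$ for every $\varepsilon$, the $\varepsilon$-dependence of $\omega_{2}$ is harmless and no single witness ever needs to be produced. You instead prove (3)$\Rightarrow$(2) trivially and then (2)$\Rightarrow$(1) by extracting a limit point $\omega_{0}$ of the family $\omega_{1}^{(\varepsilon)}\overline{\omega_{2}^{(\varepsilon)}}$ from compactness of $\T$ and feeding $\omega_{0}x'$ back into Theorem~\ref{thm:generalization daugavet center one map}. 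This works --- one needs the small continuity estimate $|\re\omega_{2}\omega_{0}x'(x)-\re\omega_{2}\omega^{(\varepsilon)}x'(x)|\leq|\omega_{0}-\omega^{(\varepsilon)}|$ to transfer the slice condition to $\omega_{0}$, and the price is that the bound you defer to (inside the proof of Theorem~\ref{thm:generalization daugavet center one map}) still invokes Lemma~\ref{lem:x in S(x',epsilon) implies |1-x'x| leq 2epsilon stefan}, so that lemma is not avoided globally, only in the (2)$\Leftrightarrow$(3) step as you correctly note. But the subnet extraction is more machinery than the statement requires: because (1) asserts a value of a maximum over $\omega$, $\varepsilon$-dependent witnesses already force $\sup_{|\omega|=1}\|\Phi+\omega x'\otimes y\|\geq\|\Phi\|+1$, and the attainment of that supremum is a standing fact from continuity of $\omega\mapsto\|\Phi+\omega x'\otimes y\|$ on the compact circle, independent of the witnesses. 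So your quantifier worry is real but resolves more cheaply than you suggest; what your route buys is that it never leaves the statements of Theorem~\ref{thm:generalization daugavet center one map} and Remark~\ref{rem:remark ade}, at the cost of an extraction argument the paper's direct estimate renders unnecessary.
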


\begin{proof}
(1) $\Rightarrow$ (2):
By Remark~\ref{rem:remark ade}(3), we can
assume $\|y\|=1$. According to (1), there exists $\omega\in\mathbb{T}$
such that $\|\Phi +\omega x'\otimes y\|=\|\Phi \|+1$. Thus, for given
$\varepsilon>0$, Theorem~\ref{thm:generalization daugavet center one map}
yields $x\in B_{X}$ and $\omega_{2}\in\mathbb{T}$ such that
\[
\re \omega_{2}\omega
x'(x)\geq1-\varepsilon\quad\mbox{and}\quad\left\Vert \omega_{2}\Phi
  (x)+y\right\Vert \geq\|\Phi \|+1-\varepsilon.
\]
Defining $\omega_{1}=\omega_{2}\omega$, (2) follows.

(2) $\Rightarrow$ (3):
If $\re \omega_{1}x'(x)\geq1-\varepsilon$,
then
\[
1-\varepsilon\leq\re \omega_{1}x'(x)\leq|x'(x)|.
\]

(3) $\Rightarrow$ (1):
It suffices to consider the case $\|y\|=1$.
For given $\varepsilon>0$, take $\omega\in\mathbb{T}$ and $x\in B_{X}$
such that
\[
|x'(x)|\geq1-\varepsilon\quad\mbox{and}\quad\left\Vert \omega \Phi
 (x)+y\right\Vert \geq\|\Phi \|+1-\varepsilon.
\]
Denote $\omega_{1}=|x'(x)|/x'(x)$ and $\omega_{2}=\overline{\omega}\omega_{1}$.
Thus
\begin{eqnarray*}
\|\Phi (x)+\omega_{2}x'(x)y\| & = & \|\Phi
 (x)+\overline{\omega}\omega_{1}x'(x)y\|\\
 & = & \|\omega \Phi (x)+\omega_{1}x'(x)y\|\\
 & \geq & \|\omega \Phi (x)+y\|-\|y-\omega_{1}x'(x)y\|\\
 & = & \|\omega \Phi (x)+y\|-|1-|x'(x)|\,|\\
 & \geq & \|\Phi \|+1-2\varepsilon,
\end{eqnarray*}
and we are done since $\varepsilon>0$ was arbitrary.
\end{proof}


\section{\label{chap:Strong-Slice-Continuity}Strong slice continuity}

In  \cite{SPWFuncAp}
the notion of slice continuity was introduced
to study when the
Daugavet equation holds for a couple of maps $\Phi$ and $\Psi$ between
Banach spaces, i.e., when
\[
\|\Phi+\Psi\|=\|\Phi\|+\|\Psi\|.
\]

The functions taken into account were either linear or bilinear bounded
maps. In this section, we will extend some of the results from \cite{SPWFuncAp}
to the case of bounded nonlinear functions.

The following definition is from \cite{SPWFuncAp}.

\begin{defn}
\label{def:natural set of slices}
Let $X,Y$ be Banach spaces and
$\Phi\in\ell_{\infty}(B_{X},Y)$.
\begsta
\item
If $y^{*}\in Y^{*}$ with $y^{*}\Phi\neq0$, we define $\Phi_{y^{*}}:B_{X}\to\mathbb{K}$
by
\[
\Phi_{y^{*}}(x)=\frac{1}{\|y^{*}\Phi\|}y^{*}\Phi(x).
\]
\item
The \textit{natural set of slices} defined by $\Phi$ is given by
\[
\calS_{\Phi}=\{S(\Phi_{y^{*}},\varepsilon): 0<\varepsilon<1,\, y^{*}\in Y^{*},\,
y^{*}\Phi\neq0\}.
\]
\item
We write $\calS_{\Psi}\leq \calS_{\Phi}$ if for every
$S(\Psi_{z^{*}},\varepsilon)\in \calS_{\Psi}$ 
there is $S(\Phi_{y^{*}},\mu)\in \calS_{\Phi}$ with
\[
S(\Phi_{y^{*}},\mu)\subset S(\Psi_{z^{*}},\varepsilon).
\]
In this instance we say that $\Psi$ is \textit{slice continuous} with respect
to $\Phi$.
\endsta
\end{defn}

Now we are ready to introduce the concept of strong slice continuity
for bounded nonlinear maps.

\begin{defn}
\label{def:slice continuity}
Let $X,Y$ be Banach spaces and $\Phi,\Psi\in\ell_{\infty}(B_{X},Y)$.
We use the symbol $\calS_{\Psi}<\calS_{\Phi}$ if for every slice
$S(\Psi_{z^{*}},\varepsilon)\in \calS_{\Psi}$
there is another slice $S(\Phi_{y^{*}},\mu)\in \calS_{\Phi}$ such that
\[
S(\omega \Phi_{y^{*}},\mu)\subset S(\omega
\Psi_{z^{*}},\varepsilon)\quad\mbox{for all }\omega\in\mathbb{T}.
\]
In this case we say that $\Psi$ is \textit{strongly slice continuous}
with respect to $\Phi$.
\end{defn}

Note that the above and similar definitions carry over to bounded
functions from $X$ to $Y$ by considering the respective restrictions
to $B_{X}$.

It is clear that strong slice continuity implies slice continuity.
The following remark shows that in the case of multilinear maps, the
two concepts coincide.

\begin{rem}
Let $X_{1},\dots,X_{n},Z$ be Banach spaces and
$A,B:X_{1}\times\cdots\times X_{n}\to Z$
bounded multilinear maps. Then $\calS_{A}<\calS_{B}$ if and only if
$\calS_{A}\leq \calS_{B}$.
\end{rem}

\begin{proof}
We only need to verify that slice continuity implies strong slice
continuity. To this end, let $S(A_{x^{*}},\varepsilon)\in \calS_{A}$
be given. Since $\calS_{A}\leq \calS_{B}$, we can find
$S(B_{y^{*}},\mu)\in \calS_{B}$ 
with $S(B_{y^{*}},\mu)\subset S(A_{x^{*}},\varepsilon)$. For a given
$\omega\in\mathbb{T}$ and $(x_{1},\dots,x_{n})\in S(\omega B_{y^{*}},\mu)$,
we have
\[
1-\mu\leq\re \omega\frac{y^{*}B(x_{1},\dots,x_{n})}{\|y^{*}B\|}=\re
\frac{y^{*}B(\omega x_{1},\dots,x_{n})}{\|y^{*}B\|},
\]
i.e., $(\omega x_{1},x_2, \dots,x_{n})\in S(B_{y^{*}},\mu)$. This ensures
$(\omega x_{1},x_2, \dots,x_{n})\in S(A_{x^{*}},\varepsilon)$, and the
multilinearity of $A$ leads to $(x_{1},x_2, \dots,x_{n})\in S(\omega
A_{x^{*}},\varepsilon)$.
\end{proof}

The canonical example of when the relation $\calS_{\Psi}<\calS_{\Phi}$ holds is
given by the case where $\Psi$ is the concatenation of a map $\Phi$ and
a bounded linear operator.

\begin{example}
\label{exa:strong slice continuity concatenation}
Let $X,Y$ be Banach
spaces. Consider $\Phi\in\ell_{\infty}(B_{X},Y)$ and a bounded linear
operator $P:Y\to Y$. Denote $\Psi=P\circ \Phi$. Then $S_{\Psi}<S_{\Phi}$.
\end{example}

\begin{proof}
Let $S(\Psi_{y^{*}},\varepsilon)$ be a slice in $\calS_{\Psi}$. First note
that since $y^{*}\Psi\neq0$, we also have $(y^{*}P)\Phi=y^{*}\Psi\neq0$,
and thus $S(\Phi_{y^{*}P},\varepsilon)\in \calS_{\Phi}$. Take
$\omega\in\mathbb{T}$ 
and $x\in S(\omega \Phi_{y^{*}P},\varepsilon)$, i.e.,
\[
\re \omega\frac{(y^{*}P)\Phi}{\|(y^{*}P)\Phi\|}(x)\geq1-\varepsilon.
\]
By construction,
\[
\re \omega\frac{y^{*}\Psi}{\|y^{*}\Psi\|}(x)=\re
\omega\frac{(y^{*}P)\Phi}{\|(y^{*}P)\Phi\|}(x)\geq1-\varepsilon, 
\]
and therefore $x\in S(\omega \Psi_{y^{*}},\varepsilon)$.
\end{proof}

The next example shows that there are bounded maps $\Phi,\Psi$ with
$\calS_{\Psi}<\calS_{\Phi}$, 
but $\Psi\neq P\circ \Phi$ for any bounded linear operator $P$.

\begin{example}
\label{exa:2.5 slice continuity-1-1}
Let $C[0,1]$ denote the Banach
space of continuous functions from $[0,1]$ to $\mathbb{K}$. Let
$\Phi:C[0,1]\oplus_{1}\mathbb{K}\to C[0,1]$, $\Phi(f,\alpha)=f$, and
$\Psi:C[0,1]\oplus_{1}\mathbb{K}\to C[0,1]$,
$\Psi(f,\alpha)=f+\alpha^{2}\mathbf{1}$, 
where $\mathbf{1}$ stands for the constant one function and $\oplus_{1}$
denotes the direct sum with the $1$-norm. Then $\Psi$ and $\Phi$ have
norm one. The kernel of $\Phi$ is not contained in the kernel of $\Psi$,
since $\Phi(0,1)=0$, but $\Psi(0,1)\neq0$. Thus we do not have
$\Psi=P\circ \Phi$ 
for any bounded linear operator $P$. But the slice condition
$\calS_{\Psi}<\calS_{\Phi}$ 
holds. First note that for any $x^{*}\in C[0,1]^{*}\backslash\{0\}$,
we have $\|x^{*}\Phi\|=\|x^{*}\Psi\|=\|x^{*}\|\neq0$. Consider some
$x^{*}\in C[0,1]^{*}$
with $\|x^{*}\|=1$, and let $0<\varepsilon<1$. We claim $S\left(\omega
  x^{*}\Phi,\frac{\varepsilon}{2}\right)\subset S\left(\omega
  x^{*}\Psi,\varepsilon\right)$
for all $\omega\in\mathbb{T}$. To prove this, assume $(f,\alpha)\in
S\left(\omega x^{*}\Phi,\frac{\varepsilon}{2}\right)$,
i.e., $\re \omega x^{*}(f)\geq1-\frac{\varepsilon}{2}$. In
particular, $\|f\|\geq1-\frac{\varepsilon}{2}$, and therefore
$|\alpha|\leq\varepsilon/2$.
Hence
\begin{eqnarray*}
\re \omega x^{*}\Psi(f,\alpha) & = & \re \omega x^{*}(f+\alpha^{2}\mathbf{1})\\
 & = & \re \omega x^{*}(f)+\re \omega x^{*}(\alpha^{2}\mathbf{1})\\
 & \geq & 1-\varepsilon.
\end{eqnarray*}
\end{example}

Consider now a closed subspace $U$ of a normed space $X$. Then $q:X\to X/U$,
$q(x)=x+U$, sends the open unit ball $U_{X}$ onto $U_{X/U}$. This
motivates the following definition.

\begin{defn}
\label{def:quotient map}
Let $X,Y$ be Banach spaces. We call
$\Phi\in\ell_{\infty}(B_{X},Y)$
a \textit{quotient map} if $\Phi$ is continuous and $\Phi(U_{X})=U_{Y}$.
\end{defn}

Given $\Phi\in \ell_\infty(B_X,Y)$ we set 
$$
Y^*\Phi \cdot Y = \{y^*\Phi \otimes
y : y^*\in Y^*, \ y\in Y\}.
$$

\begin{lem}
\label{lem:characterization of when T is a quotient map and Y has the
  daugavet property}
Let
$X,Y$ be Banach spaces and assume $\Phi\in\ell_{\infty}(B_{X},Y)$ is
a quotient map. Then the following are equivalent:
\begin{enumerate}
\item[(1)]
$Y$ has the Daugavet property.
\item[(2)]
$Y$ has the $\Phi$-Daugavet property with respect to $Y^{*}\Phi\cdot Y$.
\end{enumerate}
\end{lem}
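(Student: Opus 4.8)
The plan is to prove the two implications separately, with the main work being the direction (2) $\Rightarrow$ (1). For (1) $\Rightarrow$ (2): assume $Y$ has the Daugavet property, and take an arbitrary $\Psi = y^*\Phi \otimes y \in Y^*\Phi\cdot Y$. If $y^*\Phi = 0$ or $y = 0$ then $\Psi = 0$ and ($\Phi$-DE) is trivial, so assume $y^*\Phi \neq 0$ and $y \neq 0$; normalising via Lemma~\ref{lem:betaT} we may take $\|y^*\Phi\| = 1$, so $\Psi = (y^*\Phi)\otimes y$ with $x' := y^*\Phi$ a norm-one element of $\ell_\infty(B_X)$. I would then verify condition (2) of Theorem~\ref{thm:generalization daugavet center one map}: given $\varepsilon > 0$, I need $x\in B_X$ and $\omega\in\mathbb T$ with $\re\,\omega x'(x) \ge 1-\varepsilon$ and $\|\omega\Phi(x) + y/\|y\|\| \ge \|\Phi\| + 1 - \varepsilon$. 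Here is where the quotient property enters: since $\Phi(U_X) = U_Y$ and $\|\Phi\| = 1$ (a quotient map has norm one, as $\overline{\Phi(U_X)} = \overline{U_Y} = B_Y$), I can find points $x\in B_X$ whose images $\Phi(x)$ are prescribed elements of $U_Y$, up to arbitrarily small error. The idea is to use the slice characterisation of the Daugavet property in $Y$: the set $\{y_0\in S_Y : \|y_0 + y/\|y\|\| \ge 2 - \varepsilon'\}$ meets every slice of $B_Y$, in particular it meets the (relatively) norm-dense subset $\Phi(U_X)$ in a slice determined by the functional $y^*/\|y^*\|$ associated with $x' = y^*\Phi$; choosing such a $y_0 = \Phi(x)$ and absorbing a modulus-one rotation $\omega$ coming from the sign of $y^*(\Phi(x))$ gives both required inequalities.

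For (2) $\Rightarrow$ (1): assume $Y$ has the $\Phi$-Daugavet property with respect to $Y^*\Phi\cdot Y$; I must show $Y$ has the Daugavet property, i.e.\ $\|\Id_Y + R\| = 1 + \|R\|$ for every rank-one operator $R = z^*\otimes y$ on $Y$, with $z^*\in Y^*$, $y\in Y$. By Lemma~\ref{lem:betaT} normalise $\|z^*\| = 1$ and $\|y\| = 1$. Now apply the hypothesis to the map $\Psi := z^*\Phi \otimes y = (z^*\circ\Phi)\otimes y \in Y^*\Phi\cdot Y$ (note $z^*\Phi\neq 0$ since $z^*\neq 0$ and $\Phi$ is a quotient map, hence surjective onto $U_Y$): this yields $\|\Phi + z^*\Phi\otimes y\| = \|\Phi\| + \|z^*\Phi\otimes y\|$. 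I would translate this norm equality through Theorem~\ref{thm:generalization daugavet center one map} (applied with $x' = z^*\Phi/\|z^*\Phi\|$, or more directly keeping track of $\|z^*\Phi\|\le \|z^*\| = 1$) into the existence, for each $\varepsilon > 0$, of $x\in B_X$ and $\omega\in\mathbb T$ with $\re\,\omega z^*(\Phi(x)) \ge (1-\varepsilon)\|z^*\Phi\|$ and $\|\omega\Phi(x) + y\| \ge \|\Phi\| + 1 - \varepsilon = 2 - \varepsilon$. Setting $y_0 := \Phi(x) \in B_Y$, this produces an element of $B_Y$ that simultaneously almost lies in the slice $S(z^*, \text{small})$ (after the rotation $\omega$) and almost attains $\|y_0 + y\| = 2$; feeding this into the slice characterisation of the Daugavet property — that $\{y_0 : \|y_0 + y\|\ge 2-\varepsilon\}$ meeting every slice $S(z^*,\delta)$ is equivalent to the Daugavet equation for $z^*\otimes y$ — gives $\|\Id_Y + z^*\otimes y\| = 1 + \|z^*\otimes y\|$, and since $R$ was an arbitrary rank-one operator, $Y$ has the Daugavet property.

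The main obstacle I anticipate is the bookkeeping around the factor $\|z^*\Phi\|$, which need not equal $\|z^*\| = 1$ for a general quotient map $\Phi$ (it is only $\le 1$), and around the rotations $\omega$: in the forward direction I must manufacture the $\Phi$-preimage $x$ together with a compatible $\omega$ from the Daugavet behaviour in $Y$, and in the backward direction I must strip the nonlinearity of $\Phi$ away cleanly so that only $y_0 = \Phi(x)$ and $\omega$ survive. In both cases the surjectivity clause $\Phi(U_X) = U_Y$ is the crucial bridge: it lets me pass freely between norm-attaining-type statements about the open unit ball of $Y$ and statements about $B_X$ via $\Phi$, and I would want to state explicitly at the outset that a quotient map has $\|\Phi\| = 1$ and that $z^*\Phi\neq 0$ whenever $z^*\neq 0$. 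A secondary technical point is that $\Phi_{y^*}$ in Definition~\ref{def:natural set of slices} is only defined up to the normalisation $\|y^*\Phi\|$; I will work with the un-normalised $y^*\Phi\otimes y$ throughout and invoke Lemma~\ref{lem:betaT} and Theorem~\ref{thm:generalization daugavet center one map} in the forms stated, so no new normalisation conventions are needed.
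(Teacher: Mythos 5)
Your argument is correct, but it is considerably more elaborate than what the paper intends: the authors dispose of this lemma in one line, and the intended proof is a direct change of variables. Since $\Phi$ is continuous with $\Phi(U_X)=U_Y$, one has $U_Y\subset\Phi(B_X)\subset B_Y$ (the second inclusion because every point of $S_X$ is the limit of the points $(1-\tfrac1n)x\in U_X$), so for every continuous $F\dopu B_Y\to\R$ the supremum of $F\circ\Phi$ over $B_X$ equals the supremum of $F$ over $B_Y$. Applying this to $z\mapsto\|z+y^*(z)y\|$, to $z\mapsto\|z\|$ and to $z\mapsto|y^*(z)|\,\|y\|$ gives $\|\Phi+y^*\Phi\otimes y\|=\|\Id_Y+y^*\otimes y\|$, $\|\Phi\|=\|\Id_Y\|=1$ and $\|y^*\Phi\otimes y\|=\|y^*\otimes y\|$; hence ($\Phi$-DE) for $y^*\Phi\otimes y$ is verbatim (DE) for the rank-one operator $y^*\otimes y$ on $Y$, and since every rank-one operator on $Y$ has this form, statements (1) and (2) are literally the same. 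Your route through Theorem~\ref{thm:generalization daugavet center one map} and the slice characterisation of the Daugavet property is sound and reaches the same conclusion --- in the forward direction you do need, as you indicate, to intersect the relatively open set $\{y_0\in B_Y: \re y^*(y_0)>1-\eps,\ \|y_0+y\|>2-\eps\}$ with $U_Y=\Phi(U_X)$ to manufacture the preimage $x$, at the cost of an $\eps/2$ --- but it buys nothing over the two-line identification. One small correction to your closing worry: for a quotient map one does have $\|z^*\Phi\|=\|z^*\|$ for every $z^*\in Y^*$ (again by density of $U_Y$ in $B_Y$), so the bookkeeping issue with the normalisation factor does not actually arise.
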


\begin{proof}
This is a consequence of the assumptions that $\Phi$ is continuous and
$\Phi(U_{X})=U_{Y}$.
\end{proof}

\begin{prop}
\label{prop:norm of T plus y star R ot y equals 2}
Let $X,Y$ be Banach
spaces and assume $Y$ has the Daugavet property. Consider
$\Psi,\Phi \in\ell_{\infty}(B_{X},Y)$
such that $\Phi $ is a quotient map and $\|\Psi\|=1$. Then
$\calS_{\Psi}<\calS_{\Phi }$ 
implies that for every $y\in S_{Y}$ and $y^{*}\in Y^{*}$ with $y^{*}\Psi\neq0$
\[
\|\Phi +\Psi_{y^{*}}\otimes y\|=2.
\]
\end{prop}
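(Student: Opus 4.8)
The plan is to use the equivalence from Theorem~\ref{thm:generalization daugavet center one map} applied to the pair $(\Phi, \Phi_{y^*\Psi \text{-related slice}})$—more precisely, to verify condition~(2) of that theorem for the map $\Psi_{y^*}\otimes y$, which by Theorem~\ref{thm:generalization daugavet center one map} is equivalent to $\|\Phi + \Psi_{y^*}\otimes y\| = \|\Phi\| + \|y\| = 1 + 1 = 2$ (using that $\Phi$ is a quotient map, so $\|\Phi\| = 1$). So fix $y \in S_Y$ and $y^* \in Y^*$ with $y^*\Psi \neq 0$; write $z^* = y^*$ and let $\varepsilon > 0$ be given.

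First I would exploit the Daugavet property of $Y$: since $Y$ has the Daugavet property, for the rank-one operator $z^* \otimes y$ (suitably normalized) the Daugavet equation holds, and more usefully $Y$ satisfies the geometric slice characterization—for every slice $S$ of $B_Y$ and every $\eta > 0$ there is a point $\tilde y \in S$ with $\|\tilde y + y\| \geq 2 - \eta$. Applying this to the slice $S(\Psi_{z^*}\text{-image}, \cdot)$: concretely, consider the slice $D = \{v \in B_Y : \re \frac{z^*\Psi}{\|z^*\Psi\|}(v) \ge 1 - \mu\}$ of $B_Y$ for a small $\mu > 0$ to be chosen, and pick $v_0 \in D$ with $\|v_0 + y\| \geq 2 - \varepsilon/2$. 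Actually it is cleaner to first pass through $\calS_\Psi < \calS_\Phi$: the hypothesis gives a slice $S(\Phi_{w^*}, \mu) \in \calS_\Phi$ with $S(\omega \Phi_{w^*}, \mu) \subset S(\omega \Psi_{z^*}, \varepsilon/2)$ for all $\omega \in \mathbb{T}$, where here the slices live in $B_X$. Since $\Phi$ is a quotient map, $\Phi(U_X) = U_Y$, so the image under $\Phi$ of the slice $S(\Phi_{w^*},\mu)$ of $B_X$ is essentially a slice of $B_Y$ (up to closure/density), and I can invoke the Daugavet property of $Y$ to find $x \in S(\Phi_{w^*}, \mu)$ with $\|\Phi(x) + y\| \geq 2 - \varepsilon/2$.

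Next I would translate this back. Choosing $\omega = 1$, the inclusion $S(\Phi_{w^*}, \mu) \subset S(\Psi_{z^*}, \varepsilon/2)$ gives $\re \Psi_{z^*}(x) = \re \frac{z^*\Psi}{\|z^*\Psi\|}(x) \geq 1 - \varepsilon/2 \geq 1 - \varepsilon$. Meanwhile we have arranged $\|\Phi(x) + y\| \geq 2 - \varepsilon/2 = \|\Phi\| + 1 - \varepsilon/2 \geq \|\Phi\| + 1 - \varepsilon$. Taking $\omega = 1$ in condition~(2) of Theorem~\ref{thm:generalization daugavet center one map} with the norm-one scalar map $x' = \Psi_{z^*}$, these two inequalities are exactly what is needed. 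Hence by that theorem $\|\Phi + \Psi_{z^*} \otimes y\| = \|\Phi\| + \|y\| = 2$, as required.

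The main obstacle I anticipate is the passage "image of a slice of $B_X$ under the quotient map $\Phi$ is a slice of $B_Y$." The definition of quotient map only guarantees $\Phi(U_X) = U_Y$ with $\Phi$ continuous; so $\Phi$ maps the open ball onto the open ball, but the preimage of a slice $\{v : \re \frac{z^*\Psi}{\|z^*\Psi\|}(v) \ge 1-\mu\} \cap U_Y$ need not be exactly $S(\Phi_{w^*},\mu) \cap U_X$—one should take $w^* $ so that $w^*\Phi$ corresponds to the functional cutting the slice in $Y$, using $\|w^*\Phi\| = \|w^*|_{\overline{\Phi(U_X)}}\| = \|w^*\|$ by the quotient property, and then note that for $v$ in the open slice of $B_Y$ there is $x \in U_X$ with $\Phi(x) = v$, so $x$ lies in the corresponding slice of $B_X$. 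A little care with $\leq$ versus $<$ in the slice inequalities and with taking $\mu$ small relative to $\varepsilon$ handles the rest; I would absorb these bookkeeping losses into the freely-chosen $\mu$.
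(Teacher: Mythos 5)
Your argument is correct and follows the same skeleton as the paper's proof: reduce via Theorem~\ref{thm:generalization daugavet center one map} to producing, for each $\varepsilon>0$, a point $x$ of a slice $S(\omega\Psi_{y^*},\cdot)$ with $\|\omega\Phi(x)+y\|\ge 2-\varepsilon$; use $\calS_{\Psi}<\calS_{\Phi}$ to nest a $\Phi$-slice inside the relevant $\Psi$-slice; and then find the desired point inside that $\Phi$-slice by transferring the Daugavet property of $Y$ through the quotient map. The only place you genuinely diverge is the inner step. The paper routes it through Lemma~\ref{lem:characterization of when T is a quotient map and Y has the daugavet property} followed by a second application of Theorem~\ref{thm:generalization daugavet center one map}, which returns a rotation $\omega\in\mathbb{T}$ that need not equal $1$; this is precisely why the paper needs the inclusion $S(\lambda\Phi_{z^*},\mu)\subset S(\lambda\Psi_{y^*},\varepsilon)$ for \emph{all} $\lambda\in\mathbb{T}$, i.e.\ strong slice continuity. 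You instead invoke the geometric slice characterization of the Daugavet property of $Y$ directly and pull the resulting point back through $\Phi(U_X)=U_Y$ (the identification $\|w^*\Phi\|=\|w^*\|$ and the $(1-\delta)$-rescaling into the open ball that you flag are exactly the bookkeeping needed, and they work). This hands you a witness with $\omega=1$, so your argument only ever uses the inclusion for $\omega=1$; as a byproduct it shows that ordinary slice continuity $\calS_{\Psi}\le\calS_{\Phi}$ already suffices for this proposition. Both routes are sound; yours is marginally more self-contained and slightly sharper in its hypotheses, at the cost of redoing by hand what Lemma~\ref{lem:characterization of when T is a quotient map and Y has the daugavet property} packages.
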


\begin{proof}
By Theorem~\ref{thm:generalization daugavet center one map}, it suffices
to show that for every $\varepsilon>0$ there are $\omega\in\mathbb{T}$
and $x\in S(\omega \Psi_{y^{*}},\varepsilon)$ such that
\[
\left\Vert \omega \Phi (x)+y\right\Vert \geq2-\varepsilon.
\]

Thus, let $\varepsilon>0$ be given. Since $\calS_{\Psi}<\calS_{\Phi }$, we can
find a slice $S(\Phi _{z^{*}},\mu)\in \calS_{\Phi }$ with $\mu\leq\varepsilon$
such that $S(\lambda \Phi _{z^{*}},\mu)\subset S(\lambda
\Psi_{y^{*}},\varepsilon)$ 
for all $\lambda\in\mathbb{T}$. According to Lemma
\ref{lem:characterization of when T is a quotient map and Y has the
  daugavet property},
$\|\Phi +\Phi _{z^{*}}\otimes y\|=2$, therefore Theorem
\ref{thm:generalization daugavet center one map}
gives $\omega\in\mathbb{T}$ and $x\in S(\omega \Phi _{z^{*}},\mu)$ satisfying
\[
\|\omega \Phi (x)+y\|\geq2-\mu\geq2-\varepsilon.
\]
By construction, $S(\omega \Phi _{z^{*}},\mu)\subset S(\omega
\Psi_{y^{*}},\varepsilon)$,
and the proof is complete.
\end{proof}

\begin{rem}
\label{rem:strong slice continuity cant be removed ex 1}
The above
proposition is false if the condition $\calS_{\Psi}<\calS_{\Phi }$ is
removed. To 
see this, consider bounded linear operators
$\Phi ,\Psi:L_{1}[0,1]\oplus_{1}L_{1}[1,2]\to L_{1}[0,1]$
given by $\Phi ((f,g))=f$ and $\Psi((f,g))=(\int_{1}^{2}g\, dx)\cdot\mathbf{1}$,
where $(f,g)\in L_{1}[0,1]\oplus_{1}L_{1}[1,2]$; recall that $L_{1}[0,1]$
has the Daugavet property. Clearly, $\Phi $ is a quotient map and $\|\Psi\|=1$.
But, if $y=\mathbf{1}\in L_{1}[0,1]$ and $y^{*}=\mathbf{1}\in L_{\infty}[0,1]$,
then $\|\Phi +y^{*}\Psi\otimes y\|\leq1$.
\end{rem}

We shall now deal with weakly compact maps.
Let us start  by recalling the definition of a (nonlinear)
weakly compact map.

\begin{defn}
Let $X,Y$ be Banach spaces. A function $\Psi\in\ell_{\infty}(B_{X},Y)$
is called \textit{weakly compact} if the weak closure of $\Psi(B_{X})$ is
a weakly compact set.
\end{defn}

Let us now  prove the main result of this section, namely
Theorem~\ref{thm:daugavet property weakly compact main theorem}.

\begin{thm}
\label{thm:daugavet property weakly compact main theorem}
Let $X,Y$
be Banach spaces and let $\Phi ,\Upsilon,\Psi\in\ell_{\infty}(B_{X},Y)$ with
$\|\Phi \|=\|\Upsilon\|=\|\Psi\|=1$. Assume $Y$ has the $\Phi
$-Daugavet property 
with respect to $Y^{*} \Upsilon \cdot Y$. Then, if
$\calS_{\Psi}<\calS_{\Upsilon}$ and $\Psi$ 
is weakly compact,
\[
\|\Phi +\Psi\|=2.
\]
\end{thm}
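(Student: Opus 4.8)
The plan is as follows. Since $\|\Phi(x)+\Psi(x)\|\le\|\Phi\|+\|\Psi\|=2$ for every $x\in B_X$, only the inequality $\|\Phi+\Psi\|\ge2$ requires an argument, and for this it is enough to produce, for each $\delta>0$, a point $x\in B_X$ with $\|\Phi(x)+\Psi(x)\|>2-3\delta$. The idea is to use weak compactness in order to replace the (possibly complicated) map $\Psi$ by a single value $y_0\in Y$ on which Theorem~\ref{thm:generalization daugavet center one map} can be brought to bear, after arranging that $\Psi$ is, up to a unimodular rotation, almost constantly equal to $y_0$ on a suitable slice; strong slice continuity is then exactly what allows this slice to be transported into $\calS_\Upsilon$, where the hypothesis on $Y$ applies.

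Concretely, I would first form $W:=\overline{\co}\bigl(\mathbb{T}\cdot\Psi(B_X)\bigr)$, the closed convex hull of $\{\omega\Psi(x):\omega\in\mathbb{T},\ x\in B_X\}$. Since $\Psi$ is weakly compact and $\mathbb{T}$ is compact, $\mathbb{T}\cdot\overline{\Psi(B_X)}^{\,w}$ is weakly compact (the continuous image of a compact set under $(\omega,v)\mapsto\omega v$), so $W$ is weakly compact by the Krein--\v{S}mulian theorem; it is convex, $\mathbb{T}$-invariant, contained in $B_Y$, and $\sup_{v\in W}\|v\|=\|\Psi\|=1$. Every weakly compact convex set is the closed convex hull of its strongly exposed points (a standard consequence of the Radon--Nikod\'ym property of such sets), so the supremum of the norms of those points equals $1$, and there are a strongly exposed point $y_0$ of $W$ with $\|y_0\|>1-\delta$ and a functional $p^*\in S_{Y^*}$ strongly exposing it. Writing $m:=\re p^*(y_0)=\max_{v\in W}\re p^*(v)$, I would observe that $\mathbb{T}$-invariance of $W$ forces $\|p^*\Psi\|=\sup_{v\in W}\re p^*(v)=m$ (because $|p^*\Psi(x)|=\max_{\omega\in\mathbb{T}}\re p^*(\omega\Psi(x))$), in particular $m>0$ since $W$ is not a singleton, so $\Psi_{p^*}=\tfrac1m p^*\Psi$ is defined and $S(\Psi_{p^*},\gamma)\in\calS_\Psi$ for any $\gamma\in(0,1)$. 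Choosing first $\beta\in(0,m)$ small enough that $\{v\in W:\re p^*(v)>m-\beta\}\subseteq y_0+\delta B_Y$ (possible because $p^*$ strongly exposes $y_0$) and then $\gamma\in(0,\beta/m)$, one obtains: for all $\omega\in\mathbb{T}$ and all $x\in S(\omega\Psi_{p^*},\gamma)$, the vector $\omega\Psi(x)$ lies in $W$ and satisfies $\re p^*(\omega\Psi(x))\ge(1-\gamma)m>m-\beta$, hence $\|\omega\Psi(x)-y_0\|\le\delta$.

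Next I would apply the hypothesis $\calS_\Psi<\calS_\Upsilon$ to the slice $S(\Psi_{p^*},\gamma)$, obtaining $c^*\in Y^*$ with $c^*\Upsilon\neq0$ and $\mu\in(0,\delta]$ such that $S(\omega\Upsilon_{c^*},\mu)\subseteq S(\omega\Psi_{p^*},\gamma)$ for every $\omega\in\mathbb{T}$. Since $c^*\Upsilon\otimes y_0\in Y^*\Upsilon\cdot Y$, the $\Phi$-Daugavet property of $Y$ with respect to $Y^*\Upsilon\cdot Y$ together with Lemma~\ref{lem:betaT} gives $\|\Phi+\Upsilon_{c^*}\otimes y_0\|=\|\Phi\|+\|y_0\|$, so the implication (1)$\Rightarrow$(2) of Theorem~\ref{thm:generalization daugavet center one map}, applied with the norm-one map $\Upsilon_{c^*}$, the vector $y_0$, and $\varepsilon=\mu$, yields $x\in B_X$ and $\omega\in\mathbb{T}$ with $\re\omega\Upsilon_{c^*}(x)\ge1-\mu$ and $\bigl\|\omega\Phi(x)+y_0/\|y_0\|\bigr\|\ge\|\Phi\|+1-\mu\ge2-\delta$. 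Then $x\in S(\omega\Upsilon_{c^*},\mu)\subseteq S(\omega\Psi_{p^*},\gamma)$, so $\|\omega\Psi(x)-y_0\|\le\delta$ by the previous step, and since also $\bigl\|y_0/\|y_0\|-y_0\bigr\|=1-\|y_0\|<\delta$,
\[
\|\Phi(x)+\Psi(x)\|=\|\omega\Phi(x)+\omega\Psi(x)\|\ge\Bigl\|\omega\Phi(x)+\frac{y_0}{\|y_0\|}\Bigr\|-\Bigl\|\frac{y_0}{\|y_0\|}-y_0\Bigr\|-\|y_0-\omega\Psi(x)\|>2-3\delta .
\]
Letting $\delta\to0$ finishes the proof.

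The step I expect to be the main obstacle is precisely this bridge between an arbitrary weakly compact $\Psi$ and the rank-one situation of Theorem~\ref{thm:generalization daugavet center one map}: one must locate a single slice on which $\Psi$ and all of its unimodular multiples are simultaneously almost constant and of almost maximal norm. This is where the geometry of weakly compact convex sets is used (existence of strongly exposed points, i.e.\ the Radon--Nikod\'ym property), and it must be synchronised with the rotation-uniformity built into the definition of strong slice continuity --- which is exactly why strong, rather than merely plain, slice continuity is the correct hypothesis here; passing to $W=\overline{\co}(\mathbb{T}\cdot\Psi(B_X))$ rather than to $\overline{\co}(\Psi(B_X))$ is the device that keeps the relevant normalisations ($\|p^*\Psi\|=m$) consistent throughout.
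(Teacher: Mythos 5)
Your proposal is correct and follows essentially the same route as the paper's proof: form the balanced weakly compact set $\overline{\co}(\mathbb{T}\Psi(B_X))$, pick a strongly exposed point $y_0$ of nearly maximal norm, observe that the exposing functional composed with $\Psi$ gives a slice in $\calS_\Psi$ on which all rotations $\omega\Psi$ are $\delta$-close to $y_0$, transfer that slice into $\calS_\Upsilon$ via strong slice continuity, and invoke Theorem~\ref{thm:generalization daugavet center one map} plus the triangle inequality. The only cosmetic difference is your normalisation of the exposing functional (and the justification that $m>0$ should appeal to $W$ being balanced with $y_0\neq0$ rather than to $W$ not being a singleton), but this does not affect the argument.
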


\begin{proof}
Since
the set $K=\overline{\mathrm{co}}(\mathbb{T}\Psi(B_{X}))$ is weakly
compact by Krein's theorem, we can conclude  
that $K$ coincides with the closed convex hull of its strongly exposed
points (\cite{Bou76}, \cite[Cor.~5.18]{BL1}). 
Therefore, given $\varepsilon>0$, we may take a strongly
exposed point $y_{0}\in K$ with $\|y_{0}\|>1-\varepsilon$. Because
$y_{0}$ is a strongly exposed point, there are $z^{*}\in Y^{*}$ and
$\eta>0$ such that the set
\[
\left\{ y\in K: \re z^{*}(y)\geq\re z^{*}(y_{0})-\eta\right\}
\]
has diameter less than $\varepsilon$ and $\re z^{*}(y_{0})>\re z^{*}(y)$
for all $y\in K\backslash\{y_{0}\}$. After defining
$y_{0}^{*}=z^{*}/\re z^{*}(y_{0})$
and $\delta=\min\{\frac{\varepsilon}{2},\eta/\re z^{*}(y_{0})\}$,
we have found a slice
\[
S=\{y\in K: \re y_{0}^{*}(y)\geq1-\delta\}
\]
containing $y_{0}$ and having diameter less than $\varepsilon$.
In particular,
\[
y\in K,\ \re
y_{0}^{*}(y)\geq1-\delta\quad\Rightarrow\quad\|y-y_{0}\|<\varepsilon.
\]
Also note that since $K$ is balanced,
\[
\sup_{y\in K}\re y_{0}^{*}(y)=\sup_{y\in K}|y_{0}^{*}(y)|=1.
\]
Denote $\psi:=y_{0}^{*}\circ\Psi$. We have
\[
\|\psi\|=\sup_{x\in B_{X}}|y_{0}^{*}(\Psi(x))|=\sup_{y\in K}|y_{0}^{*}(y)|=1,
\]
hence $S(\psi,\delta)\in \calS_{\Psi}$. On account of
$\calS_{\Psi}<\calS_{\Upsilon}$, 
there are $\mu\leq\delta$ and  $S(\Upsilon_{z^{*}},\mu)\in
\calS_{\Upsilon}$ such that 
\[
S(\lambda \Upsilon_{z^{*}},\mu)\subset S(\lambda\psi,\delta)\quad\mbox{for
  all }\lambda\in\mathbb{T}.
\]
Since by assumption $\|\Phi +\Upsilon_{z^{*}}\otimes
y_{0}\|=1+\|y_{0}\|$, Theorem 
\ref{thm:generalization daugavet center one map} yields $\omega\in\mathbb{T}$
and $x\in S(\omega \Upsilon_{z^{*}},\mu)$ so that
\[
\left\Vert \omega \Phi (x)+\frac{y_{0}}{\|y_{0}\|}\right\Vert
\geq2-\mu\geq2-\varepsilon.
\]
By construction, $x\in S(\omega \Upsilon_{z^{*}},\mu)\subset
S(\omega\psi,\delta)$, 
and therefore
\[
\re y_{0}^{*}(\omega\Psi(x))=\re \omega\psi(x)\geq1-\delta,
\]
so the fact that $\omega\Psi(x)\in K$ gives
$\|\omega\Psi(x)-y_{0}\|<\varepsilon$.

We calculate
\begin{eqnarray*}
\|y_{0}+\omega \Phi (x)\| & \geq & \left\Vert \omega
 \Phi (x)+\frac{y_{0}}{\|y_{0}\|}\right\Vert -\left\Vert
 y_{0}-\frac{y_{0}}{\|y_{0}\|}\right\Vert \\
 & = & \left\Vert \omega \Phi (x)+\frac{y_{0}}{\|y_{0}\|}\right\Vert
 -|\,\|y_{0}\|-1|\\
 & \geq & 2-2\varepsilon.
\end{eqnarray*}
Finally,
\begin{eqnarray*}
\|\Phi +\Psi\| & \geq & \|\Phi (x)+\Psi(x)\|\\
 & = & \|\omega \Phi (x)+\omega\Psi(x)\|\\
 & \geq & \|\omega \Phi (x)+y_{0}\|-\|\omega\Psi(x)-y_{0}\|\\
 & \geq & 2-3\varepsilon.
\end{eqnarray*}
Letting $\varepsilon\downarrow0$ we conclude that $\Psi$ satisfies
($\Phi $-DE).
\end{proof}

\begin{remark} \label{RN}
The requirement on the weak compactness of the function $\Psi$ can be
 substituted in the result above by the more general notion of
 Radon-Nikod\'ym function, which fits exactly with what is needed; see
 the definition and how to use it in this setting for example in 
 \cite{boseka}. 
One way of defining the Radon-Nikod\'ym property for a closed convex
 set  $A$ is that every closed convex subset $B \subset A$ is the
 closed convex hull of its 
 strongly exposed points. (See \cite[Th.~5.8 and Th.~5.17]{BL1}.)
So, a function is said to be a Radon-Nikod\'ym
 function if the closure of $T(B_X)$ has the Radon-Nikod\'ym
 property.  
\end{remark}

\begin{cor}
\label{cor:daugavet property weakly compact first corollary}
Let $X,Y$
be Banach spaces and assume $Y$ has the Daugavet property. Consider
$\Phi ,\Psi\in\ell_{\infty}(B_{X},Y)$ such that $\Phi $ is a quotient map
and $\|\Psi\|=1$. If $\calS_{\Psi}<\calS_{\Phi }$ and $\Psi$ is weakly compact,
then
\[
\|\Phi +\Psi\|=2.
\]
\end{cor}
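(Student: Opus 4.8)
The plan is to deduce this directly from Theorem~\ref{thm:daugavet property weakly compact main theorem} by taking $\Upsilon=\Phi$. For that I must check that the three hypotheses of that theorem are met: that $\|\Phi\|=\|\Psi\|=1$, that $Y$ has the $\Phi$-Daugavet property with respect to $Y^{*}\Phi\cdot Y$, and that $\calS_{\Psi}<\calS_{\Phi}$ with $\Psi$ weakly compact. The last two items are immediate: $\calS_{\Psi}<\calS_{\Phi}$ and the weak compactness of $\Psi$ are assumed, and the $\Phi$-Daugavet property with respect to $Y^{*}\Phi\cdot Y$ follows from Lemma~\ref{lem:characterization of when T is a quotient map and Y has the daugavet property}, since $\Phi$ is a quotient map and $Y$ has the Daugavet property. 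The normalization $\|\Psi\|=1$ is also assumed.

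The one small point that needs a word is $\|\Phi\|=1$. This is forced by the quotient map property: since $\Phi$ is continuous and $U_X$ is dense in $B_X$, for every $x\in B_X$ one may take $x_n=(1-\tfrac1n)x\in U_X$ and pass to the limit to get $\Phi(x)\in\overline{\Phi(U_X)}=\overline{U_Y}=B_Y$, so $\|\Phi\|\le1$; conversely $\Phi(U_X)=U_Y$ gives $\|\Phi\|\ge\sup_{y\in U_Y}\|y\|=1$. Hence $\|\Phi\|=1$, and all hypotheses of Theorem~\ref{thm:daugavet property weakly compact main theorem} are in place with $\Upsilon=\Phi$, yielding $\|\Phi+\Psi\|=2$.

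I do not anticipate any real obstacle here; the statement is a specialization of the preceding theorem, the only content being the observation that a quotient map into a space with the Daugavet property automatically supplies, via Lemma~\ref{lem:characterization of when T is a quotient map and Y has the daugavet property}, the abstract $\Phi$-Daugavet hypothesis on the family $Y^{*}\Phi\cdot Y$ that the theorem requires.
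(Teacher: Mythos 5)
Your proposal is correct and follows exactly the route of the paper: invoke Lemma~\ref{lem:characterization of when T is a quotient map and Y has the daugavet property} to get the $\Phi$-Daugavet property with respect to $Y^{*}\Phi\cdot Y$, then apply Theorem~\ref{thm:daugavet property weakly compact main theorem} with $\Upsilon=\Phi$. The extra remark verifying $\|\Phi\|=1$ from the quotient-map condition is a harmless detail the paper leaves implicit.
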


\begin{proof}
In Lemma~\ref{lem:characterization of when T is a quotient map and Y has the daugavet property}
we observed that if $Y$ has the Daugavet property and $\Phi $ is a quotient
map, then $Y$ has the $\Phi $-Daugavet property with respect to
$Y^{*}\Phi \cdot Y$.
Thus, Theorem~\ref{thm:daugavet property weakly compact main theorem}
yields $\|\Phi +\Psi\|=2$.
\end{proof}

\begin{rem}
\label{rem:strong slice continuity cannot be removed ex 2}
The above
corollary is not valid if the condition $\calS_{\Psi}<\calS_{\Phi }$ is removed.
For instance, consider $\Phi ,\Psi:L_{1}[0,1]\oplus_{1}L_{2}[1,2]\to
L_{1}[0,1]$
given by $\Phi ((f,g))=f$ and $\Psi((f,g))(x)=g(x+1)$, where $(f,g)\in
L_{1}[0,1]\oplus_{1}L_{2}[1,2]$.
Then $\Psi$ is weakly compact and $\|\Phi \|=\|\Psi\|=1$, but $\|\Phi
+\Psi\|\leq1$.
\end{rem}

\begin{thm}
\label{thm:daugavet property weakly compact second main theorem}
Let
$X,Y$ be Banach spaces and let $\mathcal{Z}$ be a subspace of
$\ell_{\infty}(B_{X})$.
Assume $\Phi \in\ell_{\infty}(B_{X},Y)$ with $\|\Phi \|=1$. Then the following
are equivalent:
\begin{enumerate}
\item[(1)]
For every $x'\in\mathcal{Z}$ and every $y\in Y$,
$x'\otimes y$ satisfies $\mathrm{(}\Phi $-$\mathrm{DE)}$.
\item[(2)]
For every $x'\in S_{\mathcal{Z}}$, every $y\in S_{Y}$,
and every $\varepsilon>0$, there exist $\omega\in\mathbb{T}$ and
$x\in B_{X}$ such that
\[
\re \omega x'(x)\geq1-\varepsilon\quad\mbox{and}\quad\left\Vert \omega
  \Phi (x)+y\right\Vert \geq2-\varepsilon.
\]
\item[(3)]
Every weakly compact $\Psi\in\ell_\infty(B_X,Y)$ such that $y^*\circ
\Psi\in \mathcal{Z}$ for all $y^*\in Y^*$ satisfies
$\mathrm{(}\Phi $-$\mathrm{DE)}$.
\end{enumerate}
\end{thm}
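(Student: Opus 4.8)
The plan is to establish the cycle $(1)\Rightarrow(2)\Rightarrow(3)\Rightarrow(1)$, exploiting that conditions (1) and (2) are essentially a reformulation of Theorem~\ref{thm:generalization daugavet center one map} and that (3) specializes to rank-one maps.

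For $(1)\Rightarrow(2)$: fix $x'\in S_{\mathcal{Z}}$ and $y\in S_Y$. Since $x'\otimes y$ satisfies $(\Phi$-DE), we have $\|\Phi+x'\otimes y\|=\|\Phi\|+\|y\|=2$. Now apply Theorem~\ref{thm:generalization daugavet center one map} with the given $x'$ and $y$: statement~(1) of that theorem holds, so statement~(2) of that theorem gives, for every $\varepsilon>0$, some $x\in B_X$ and $\omega\in\mathbb{T}$ with $\re\omega x'(x)\geq1-\varepsilon$ and $\|\omega\Phi(x)+y\|\geq\|\Phi\|+1-\varepsilon=2-\varepsilon$, which is exactly~(2).

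For $(2)\Rightarrow(3)$: this is the heart of the argument and should follow the proof of Theorem~\ref{thm:daugavet property weakly compact main theorem} almost verbatim, with $\Upsilon$ replaced by $\Phi$ itself and the slice-continuity hypothesis replaced by the constructive statement~(2). Given a weakly compact $\Psi$ with $y^*\circ\Psi\in\mathcal{Z}$ for all $y^*$, we may assume $\|\Psi\|=1$ (Lemma~\ref{lem:betaT}); form $K=\overline{\co}(\mathbb{T}\Psi(B_X))$, which is weakly compact by Krein's theorem, hence equals the closed convex hull of its strongly exposed points. Pick a strongly exposed point $y_0\in K$ with $\|y_0\|>1-\varepsilon$, get the exposing functional $z^*$ and a small slice $S$ of $K$ around $y_0$ of diameter $<\varepsilon$, normalize to $y_0^*=z^*/\re z^*(y_0)$ and $\delta=\min\{\varepsilon/2,\dots\}$ so that $y\in K,\ \re y_0^*(y)\geq1-\delta$ forces $\|y-y_0\|<\varepsilon$. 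Set $x':=y_0^*\circ\Psi$; since $K$ is balanced, $\|x'\|=\sup_{y\in K}|y_0^*(y)|=1$, and crucially $x'\in\mathcal{Z}$ by hypothesis, so $x'\in S_{\mathcal{Z}}$. Now invoke~(2) with this $x'$, the vector $y_0/\|y_0\|\in S_Y$, and tolerance $\delta$: we obtain $\omega\in\mathbb{T}$ and $x\in B_X$ with $\re\omega x'(x)\geq1-\delta$ and $\|\omega\Phi(x)+y_0/\|y_0\|\|\geq2-\delta\geq2-\varepsilon$. From $\re\omega x'(x)=\re y_0^*(\omega\Psi(x))\geq1-\delta$ and $\omega\Psi(x)\in K$ we get $\|\omega\Psi(x)-y_0\|<\varepsilon$. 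Then the two short triangle-inequality computations at the end of that theorem's proof — passing from $\|y_0/\|y_0\|\|$ to $\|y_0\|$ at cost $|\,\|y_0\|-1|<\varepsilon$, and then estimating $\|\Phi+\Psi\|\geq\|\Phi(x)+\Psi(x)\|=\|\omega\Phi(x)+\omega\Psi(x)\|\geq\|\omega\Phi(x)+y_0\|-\|\omega\Psi(x)-y_0\|\geq2-3\varepsilon$ — yield $\|\Phi+\Psi\|\geq2-3\varepsilon$; letting $\varepsilon\downarrow0$ gives $\|\Phi+\Psi\|=2=\|\Phi\|+\|\Psi\|$, so $\Psi$ satisfies $(\Phi$-DE).

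For $(3)\Rightarrow(1)$: given $x'\in\mathcal{Z}$ and $y\in Y$, the map $\Psi=x'\otimes y$ has one-dimensional range, hence is weakly compact (its image lies in $\K y$, a finite-dimensional, thus locally compact, bounded set), and $y^*\circ\Psi=y^*(y)\,x'\in\mathcal{Z}$ since $\mathcal{Z}$ is a subspace. So~(3) applies and $x'\otimes y$ satisfies $(\Phi$-DE). The only point requiring a word of care is the boundedness/weak compactness of $\Psi(B_X)=x'(B_X)\,y$: since $\|x'\|<\infty$, $x'(B_X)$ is a bounded subset of $\K$, so $\Psi(B_X)$ is a bounded subset of the finite-dimensional space $\K y$ and its weak closure is norm-compact. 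The main obstacle in the whole argument is simply making sure, in $(2)\Rightarrow(3)$, that the function $x'=y_0^*\circ\Psi$ produced by the strong-exposedness machinery actually lands in the prescribed subspace $\mathcal{Z}$ — but this is guaranteed precisely by the hypothesis $y^*\circ\Psi\in\mathcal{Z}$ for all $y^*\in Y^*$, applied to $y^*=y_0^*$.
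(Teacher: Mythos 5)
Your proposal is correct and follows essentially the same route as the paper: $(1)\Leftrightarrow(2)$ via Theorem~\ref{thm:generalization daugavet center one map}, the weakly compact implication via the Krein/strongly-exposed-point machinery, and $(3)\Rightarrow(1)$ from the weak compactness of finite-rank maps together with the observation that $y^*\circ(x'\otimes y)=y^*(y)x'\in\mathcal{Z}$. The only cosmetic difference is that the paper obtains $(1)\Rightarrow(3)$ by citing Theorem~\ref{thm:daugavet property weakly compact main theorem} with $\Upsilon=\Psi$ (where $\calS_{\Psi}<\calS_{\Psi}$ holds trivially), whereas you inline that theorem's proof starting from condition~(2); the underlying argument is identical.
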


\begin{proof}
(1) $\Leftrightarrow$ (2):
This equivalence follows from Theorem~\ref{thm:generalization daugavet
  center one map}.

(1) $\Rightarrow$ (3):
Let $\Psi$ be as in (3).  Because of (1), $Y$ has the $\Phi$-Daugavet
property with respect to $Y^{*}\Psi\cdot Y$. Since  trivially
$\calS_{\Psi}<\calS_{\Psi}$, 
Theorem~\ref{thm:daugavet property weakly compact main theorem} gives
(3).

(3) $\Rightarrow$ (1):
Given $x'\in\mathcal{Z}$ and $y\in Y$,
$x'\otimes y$ has finite-dimensional range and consequently
is a weakly compact map.
\end{proof}

For completeness we note the $n$-linear version of
\cite[Cor.~3.10]{SPWFuncAp}. 

\begin{cor}\label{cor3.16}
Let $X_{1},\dots,X_{n},Y$ be Banach spaces and consider a continuous
multilinear map $B_{0}:X_{1}\times\cdots\times X_{n}\to Y$ satisfying
$B_{0}(U_{X_{1}\times\cdots\times X_{n}})=U_{Y}$. Consider the subsets
$R$, $C$ and $WC$ of $L(Y,Y)$ of rank one, compact and weakly
compact linear operators. Denote $R\circ B_{0}=\{T \circ B_{0}:  T\in R\}$,
$C\circ B_{0}=\{T\circ B_{0}:  T\in C\}$ and $WC\circ
B_{0}=\{T\circ B_{0}:  T\in WC\}$.
Then the following are equivalent:
\begin{enumerate}
\item[(1)]
$Y$ has the Daugavet property.
\item[(2)]
$Y$ has the $B_{0}$-Daugavet property with respect to
$R\circ B_{0}$.
\item[(3)]
$Y$ has the $B_{0}$-Daugavet property with respect to
$C\circ B_{0}$.
\item[(4)]
$Y$ has the $B_{0}$-Daugavet property with respect to
$WC\circ B_{0}$.
\end{enumerate}
\end{cor}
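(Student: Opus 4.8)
The plan is to obtain Corollary~\ref{cor3.16} as a direct application of Theorem~\ref{thm:daugavet property weakly compact second main theorem} together with the preceding structural results, by choosing the auxiliary data appropriately. First I would restrict the multilinear map $B_0$ to the unit ball $B_{X_1\times\cdots\times X_n}$ (equipped with any reasonable cross-norm making the hypothesis $B_0(U_{X_1\times\cdots\times X_n})=U_Y$ meaningful), so that $B_0\in\ell_\infty(B_{X_1\times\cdots\times X_n},Y)$ with $\|B_0\|=1$, and observe that $B_0$ is then a quotient map in the sense of Definition~\ref{def:quotient map}. This places us squarely in the setting of Lemma~\ref{lem:characterization of when T is a quotient map and Y has the daugavet property}, which already gives the equivalence of (1) with ``$Y$ has the $B_0$-Daugavet property with respect to $Y^*B_0\cdot Y$''. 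The observation to make is that $Y^*B_0\cdot Y = R\circ B_0$: indeed $y^*B_0\otimes y$, as a map on $B_{X_1\times\cdots\times X_n}$, equals $(T\circ B_0)$ where $T=y^*\otimes y\in L(Y,Y)$ has rank one, and conversely every rank-one $T\in L(Y,Y)$ is of this form. Hence (1)$\Leftrightarrow$(2).

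Next I would handle the implications among (2), (3), (4). The implications (4)$\Rightarrow$(3)$\Rightarrow$(2) are immediate from the inclusions $R\subset C\subset WC$ of operator classes, which give $R\circ B_0\subset C\circ B_0\subset WC\circ B_0$. For the reverse direction it suffices to prove (2)$\Rightarrow$(4). Here I would invoke Theorem~\ref{thm:daugavet property weakly compact main theorem} with $\Upsilon=\Phi=B_0$: assuming (2), i.e.\ the $B_0$-Daugavet property with respect to $Y^*B_0\cdot Y = R\circ B_0$, we want to show every $\Psi=T\circ B_0$ with $T\in WC$ satisfies ($B_0$-DE). Such a $\Psi$ is weakly compact because $\Psi(B_{X_1\times\cdots\times X_n})\subset T(B_Y)$ has relatively weakly compact image (after scaling so that $\|\Psi\|=1$, using Lemma~\ref{lem:betaT} to reduce to the norm-one case, or handling $\|\Psi\|=0$ trivially). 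Moreover $\calS_\Psi<\calS_{B_0}$ by Example~\ref{exa:strong slice continuity concatenation}, since $\Psi=T\circ B_0$ is the concatenation of $B_0$ with the bounded linear operator $T$. Theorem~\ref{thm:daugavet property weakly compact main theorem} then yields $\|B_0+\Psi\|=2=\|B_0\|+\|\Psi\|$, which is precisely ($B_0$-DE) for $\Psi$; this gives (4).

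The only point requiring a little care is the normalization: Theorem~\ref{thm:daugavet property weakly compact main theorem} is stated for maps of norm exactly one, so I would first dispose of the case $\|T\circ B_0\|=0$ (where ($B_0$-DE) is trivial), then for $T\circ B_0\neq 0$ apply Remark~\ref{rem:remark ade}(3) — or rather Lemma~\ref{lem:betaT} — to rescale $\Psi$ to norm one, noting that $\calS_\Psi$ and the weak-compactness of $\Psi$ are unaffected by positive scaling, and that scaling back preserves ($B_0$-DE). I do not expect any genuine obstacle here; the ``hard part'', such as it is, is purely bookkeeping: making sure the identification $Y^*B_0\cdot Y=R\circ B_0$ is stated cleanly and that the weak compactness of $T\circ B_0$ for $T$ weakly compact is recorded (it follows since $T$ maps the bounded set $B_0(B_{X_1\times\cdots\times X_n})\subset B_Y$ into a relatively weakly compact set, and the weak closure of a bounded set's image under a weakly compact operator is weakly compact).
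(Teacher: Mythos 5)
Your proposal is correct and follows essentially the same route as the paper: (1)$\Leftrightarrow$(2) via Lemma~\ref{lem:characterization of when T is a quotient map and Y has the daugavet property} together with the identification $Y^{*}B_{0}\cdot Y=R\circ B_{0}$, (4)$\Rightarrow$(3)$\Rightarrow$(2) via the inclusions $R\subset C\subset WC$, and (2)$\Rightarrow$(4) through the weakly compact machinery of Section~3. The only cosmetic difference is that you invoke Theorem~\ref{thm:daugavet property weakly compact main theorem} directly (with $\Upsilon=B_{0}$ and Example~\ref{exa:strong slice continuity concatenation} supplying $\calS_{T\circ B_{0}}<\calS_{B_{0}}$), whereas the paper cites Theorem~\ref{thm:daugavet property weakly compact second main theorem} with $\mathcal{Z}=\{y^{*}\circ B_{0}: y^{*}\in Y^{*}\}$; since that theorem's implication (1)$\Rightarrow$(3) is itself deduced from Theorem~\ref{thm:daugavet property weakly compact main theorem}, the two arguments coincide in substance.
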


\begin{proof}
The equivalence of (1) and (2) follows from
Lemma~\ref{lem:characterization of when T is a quotient map and Y has
  the daugavet property}.
(2) and (4) are equivalent by letting $\mathcal{Z}=\{y^{*}\circ
B_{0}:  y^{*}\in Y^{*}\}$
in Theorem~\ref{thm:daugavet property weakly compact second main theorem}.
The implications (4) $\Rightarrow$ (3) $\Rightarrow$ (2) are due
to the inclusions $R\subset C\subset WC$.
\end{proof}


\section{\label{chap:Weak-Slice-Continuity}Weak slice continuity}

In the last section we defined the notion of strong slice continuity
and related it to the Daugavet equation.
This section is the analogue of Section~\ref{chap:Strong-Slice-Continuity}
for the alternative Daugavet equation. We introduce the concept of
weak slice continuity to further investigate when two maps $\Psi,\Phi$
satisfy the alternative Daugavet equation, i.e., when
\[
\max_{|\omega|=1}\|\Phi+\omega\Psi\|=\|\Phi\|+\|\Psi\|.
\]

\begin{defn}
\label{def:weak slice}
Let $X$ be a Banach space, $x'\in\ell_{\infty}(B_{X})$
with $\|x'\|=1$ and $\varepsilon>0$. We write
\[
S'(x',\varepsilon)=\{x\in B_{X}: |x'(x)|\geq1-\varepsilon\}
\]
for the \textit{weak slice} of $B_{X}$ determined by $x'$ and $\varepsilon$.
\end{defn}

In a second step we extend the above definition to Banach space valued
functions.

\begin{defn}
\label{def:natural set of slices-1}
Let $X,Y$ be Banach spaces and
$\Phi\in\ell_{\infty}(B_{X},Y)$. The \textit{natural set of weak slices} defined
by $\Phi$ is given by
\[
\calS_{\Phi}'=\{S'(\Phi_{y^{*}},\varepsilon): 0<\varepsilon<1,\, y^{*}\in
Y^{*},\, y^{*}\Phi\neq0\}.
\]
\end{defn}

Now we are in a position to define weak slice continuity in
analogy to strong slice continuity; cf.\   Definition~\ref{def:slice
  continuity}.

\begin{defn}
Let $X,Y$ be Banach spaces and $\Phi,\Psi\in\ell_{\infty}(B_{X},Y)$. We
write $\calS_{\Psi}'<\calS_{\Phi}'$ if for every weak slice
$S'(\Psi_{z^{*}},\varepsilon)\in \calS_{\Psi}'$
there is another weak slice $S'(\Phi_{y^{*}},\mu)\in \calS_{\Phi}'$
such that
\[
S'(\Phi_{y^{*}},\mu)\subset S'(\Psi_{z^{*}},\varepsilon).
\]
In this case we say that $\Psi$ is \textit{weakly slice continuous} with respect
to $\Phi$.
\end{defn}

If $\Phi,\Psi$ are two maps such that $\Psi$ is strongly slice continuous
with respect to $\Phi$, then $\Psi$ is also slice continuous with respect
to $\Phi$. Let us check that a similar implication holds for strong
and weak slice continuity.

\begin{rem}
\label{rem:strong slice continuity implies weak slice continuity}
Let
$X,Y$ be Banach spaces and $\Phi,\Psi\in\ell_{\infty}(B_{X},Y)$. Then
$\calS_{\Psi}<\calS_{\Phi}$ implies $\calS_{\Psi}'<\calS_{\Phi}'$.
\end{rem}

\begin{proof}
Assume $S'(\Psi_{z^{*}},\varepsilon)\in \calS_{\Psi}'$. Since
$\calS_{\Psi}<\calS_{\Phi}$, there is $S(\Phi_{y^{*}},\mu)\in
\calS_{\Phi}$ satisfying 
$S(\lambda \Phi_{y^{*}},\mu)\subset S(\lambda \Psi_{z^{*}},\varepsilon)$
for all $\lambda\in\mathbb{T}$. We claim $S'(\Phi_{y^{*}},\mu)\subset
S'(\Psi_{z^{*}},\varepsilon)$.
To prove this, let $x\in B_{X}$ with $|\Phi_{y^{*}}(x)|\geq1-\mu$ and
denote $\omega=|\Phi_{y^{*}}(x)|/\Phi_{y^{*}}(x)$. Then $\re \omega
\Phi_{y^{*}}(x)=|\Phi_{y^{*}}(x)|\geq1-\mu$
and therefore $\re \omega \Psi_{z^{*}}(x)\geq1-\varepsilon$.
In particular, $|\Psi_{z^{*}}(x)|\geq1-\varepsilon$, i.e., $x\in
S'(\Psi_{z^{*}},\varepsilon)$.
\end{proof}

The next example shows that the reverse implication in the above remark
does not hold.

\begin{example}
\label{exa:finally some good example}
Let $\Psi:\mathbb{R}\to\mathbb{R}$ be defined by
\[
\Psi(x)=\begin{cases}
1 & \mbox{if }x=0,\\
-|x| & \mbox{if }x\neq0.
\end{cases}
\]
Then $\Psi$ is weakly slice continuous with respect to the identity,
but $\Psi$ is not strongly slice continuous with respect to the
identity.
\end{example}

\begin{proof}
Consider the slice $S(\Psi,1/2)\in \calS_{\Psi}$. Then
$S(c\,\mathrm{Id},\varepsilon)\not\subset S(\Psi,1/2)$
for any $c\in\{-1,1\}$ and $0<\varepsilon<1$. Thus $\Psi$ is not
strongly slice continuous with respect to $\mathrm{Id}$. But if $c\in\{-1,1\}$
and $0<\varepsilon<1$ are given, then
$S'(\mathrm{Id},\varepsilon)\subset S'(c\Psi,\varepsilon)$.
Therefore $\Psi$ is weakly slice continuous with respect to
$\mathrm{Id}$.
\end{proof}

\begin{example}
Let $X,Y$ be Banach spaces. Consider $\Phi\in\ell_{\infty}(B_{X},Y)$
and a bounded linear operator $P:Y\to Y$. Denote $\Psi=P\circ \Phi$. Then
$\calS_{\Psi}'<\calS_{\Phi}'$.
\end{example}

\begin{proof}
According to Example~\ref{exa:strong slice continuity concatenation},
the assumptions imply $\calS_{\Psi}<\calS_{\Phi}$. Hence
$\calS_{\Psi}'<\calS_{\Phi}'$ 
by Remark~\ref{rem:strong slice continuity implies weak slice continuity}.
\end{proof}

Note that we have shown in Example~\ref{exa:finally some good example}
that there are bounded maps $\Phi,\Psi$ with $\calS_{\Psi}'<\calS_{\Phi}'$,
but $\Psi\neq P\circ \Phi$ for any bounded linear operator $P$.

Recall from Definition~\ref{def:quotient map} that a quotient map
is a continuous function mapping the open unit ball of its domain
onto the open unit ball of its range space. These properties allow
for the following lemma.
\begin{lem}
\label{lem:characterization of when T is a quotient map and Y has the
  alternative daugavet property}
Let
$X,Y$ be Banach spaces and assume $\Phi\in\ell_{\infty}(B_{X},Y)$ is
a quotient map. Then the following are equivalent:
\begin{enumerate}
\item[(1)]
$Y$ has the alternative Daugavet property.
\item[(2)]
$Y$ has the alternative $\Phi$-Daugavet property with respect
to $Y^{*}\Phi\cdot Y$.
\end{enumerate}
\end{lem}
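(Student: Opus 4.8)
The plan is to mimic the proof of Lemma~\ref{lem:characterization of when T is a quotient map and Y has the daugavet property}, replacing the Daugavet equation by the alternative Daugavet equation throughout, and using Theorem~\ref{thm:generalization alternative daugavet center one map} in place of Theorem~\ref{thm:generalization daugavet center one map}. The implication (1) $\Rightarrow$ (2) is the substantive one. Assume $Y$ has the alternative Daugavet property. Fix $y^{*}\in Y^{*}$ with $y^{*}\Phi\neq0$ and $y\in Y$; we must show $\Phi_{y^{*}}\otimes y$ satisfies ($\Phi$-ADE). By Remark~\ref{rem:remark ade}(3) we may assume $\|y\|=1$. We will verify condition (3) of Theorem~\ref{thm:generalization alternative daugavet center one map}: for every $\varepsilon>0$ we must find $\omega\in\mathbb{T}$ and $x\in B_{X}$ with $|\Phi_{y^{*}}(x)|\geq1-\varepsilon$ and $\|\omega\Phi(x)+y\|\geq\|\Phi\|+1-\varepsilon$. (Note $\|\Phi\|=1$ since $\Phi$ is a quotient map.)

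The key point, exactly as in Lemma~\ref{lem:characterization of when T is a quotient map and Y has the daugavet property}, is that the alternative Daugavet property of $Y$ is equivalent to a slice-type geometric condition on $B_{Y}$ itself: for every $z^{*}\in S_{Y^{*}}$, every $v\in S_{Y}$, and every $\varepsilon>0$ there is $u\in B_{Y}$ with $|z^{*}(u)|\geq 1-\varepsilon$ and $\|u+v\|\geq 2-\varepsilon$ (after an appropriate unimodular rotation); this is the specialization of Theorem~\ref{thm:generalization alternative daugavet center one map} to $X=Y$, $\Phi=\Id$, applied to the rank-one maps $z^{*}\otimes v$. Apply this with $z^{*}$ a Hahn--Banach normalization of $y^{*}\Phi$ (so that $z^{*}\Phi/\|y^{*}\Phi\| = \Phi_{y^{*}}$ on $B_{X}$, i.e. $z^{*}$ extends the functional whose composition with $\Phi$ gives $\Phi_{y^{*}}$ — more precisely take $z^{*}$ with $\|z^{*}\|\le 1$ and $z^{*}\Phi = y^{*}\Phi$ via norming) and with $v=y$. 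This yields $u\in B_{Y}$ and $\omega\in\mathbb{T}$ with $|z^{*}(u)|\geq1-\varepsilon$ and $\|\omega u + y\|\geq 2-\varepsilon$.

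Now we use that $\Phi$ is a quotient map: since $\|u\|\le 1$, for any $\delta>0$ there is $x\in B_{X}$ (indeed in $U_X$ after shrinking $u$ slightly into $U_Y$) with $\|\Phi(x)-u\|<\delta$; here continuity of $\Phi$ together with $\Phi(U_X)=U_Y$ is what matters, so pick $x\in B_X$ with $\Phi(x)$ as close to $u$ as we like. Then $|\Phi_{y^{*}}(x)| = |z^{*}(\Phi(x))| \geq |z^{*}(u)| - \delta \geq 1-\varepsilon-\delta$ and $\|\omega\Phi(x)+y\| \geq \|\omega u + y\| - \delta \geq 2-\varepsilon-\delta$. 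Choosing $\delta$ small and relabelling $\varepsilon$ gives precisely condition (3) of Theorem~\ref{thm:generalization alternative daugavet center one map}, hence $\Phi_{y^{*}}\otimes y$ satisfies ($\Phi$-ADE), which is (2).

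For (2) $\Rightarrow$ (1): let $R\in L(Y,Y)$ have one-dimensional range, say $R = z^{*}\otimes v$ with $z^{*}\in Y^{*}$, $v\in Y$. If $z^{*}=0$ there is nothing to prove; otherwise $\Psi := R\circ\Phi = (z^{*}\Phi)\otimes v \in Y^{*}\Phi\cdot Y$, and since $\Phi$ is a quotient map, $\|\Psi\| = \|R\|_{|B_Y} = \|R\|$ and more generally $\|\Id + \omega R\|$ is computed over $B_Y = \Phi(U_X)$-closure, so $\max_{|\omega|=1}\|\Id + \omega R\| = \max_{|\omega|=1}\|\Phi + \omega\Psi\|\,/\,(\text{same normalization})$; the hypothesis (2) gives $\max_{|\omega|=1}\|\Phi+\omega\Psi\| = \|\Phi\| + \|\Psi\|$, and translating back through $\Phi(U_X)=U_Y$ yields $\max_{|\omega|=1}\|\Id + \omega R\| = 1 + \|R\|$, i.e. (1). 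The main obstacle I expect is purely bookkeeping: keeping the normalizations straight (Hahn--Banach extension of $y^{*}\Phi$, the scalar $\|y^{*}\Phi\|$, the reductions to $\|y\|=1$) and making precise the elementary but slightly fussy transfer between suprema over $B_Y$ and over $\Phi(U_X)$, which is exactly the content deferred in the proof of Lemma~\ref{lem:characterization of when T is a quotient map and Y has the daugavet property}; no new idea beyond Theorem~\ref{thm:generalization alternative daugavet center one map} and the definition of quotient map is needed.
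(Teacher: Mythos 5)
Your argument is correct, and it fills in details the paper simply omits (this lemma is stated without proof; its non-alternative twin gets only the one-liner ``this is a consequence of the assumptions that $\Phi$ is continuous and $\Phi(U_X)=U_Y$''). The intended argument is the direct supremum transfer: since $\Phi(U_X)=U_Y$ and, by continuity, $\Phi(B_X)\subseteq B_Y$, one has $\sup_{x\in B_X}\|A(\Phi(x))\|=\|A\|$ for every bounded linear $A:Y\to Y$, applied to $A=\Id+\omega\,y^*\otimes y$; this gives $\max_{|\omega|=1}\|\Phi+\omega(y^*\Phi)\otimes y\|=\max_{|\omega|=1}\|\Id+\omega\,y^*\otimes y\|$ together with $\|\Phi\|=1$ and $\|(y^*\Phi)\otimes y\|=\|y^*\otimes y\|$, and both implications drop out symmetrically. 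Your (2)$\Rightarrow$(1) is exactly this (modulo the garbled ``same normalization'' phrase --- no normalization is needed). Your (1)$\Rightarrow$(2) instead routes through condition (3) of Theorem~\ref{thm:generalization alternative daugavet center one map} plus an approximation of $u\in B_Y$ by $\Phi(x)$ with $\Phi(x)=(1-\delta)u$; this is valid but longer than necessary, and the Hahn--Banach language is a red herring: the functional you want is simply $z^*=y^*/\|y^*\|$, which works because $\|y^*\Phi\|=\|y^*\|$ --- a fact you should state explicitly, as it is exactly where $\Phi(U_X)=U_Y$ and $\Phi(B_X)\subseteq B_Y$ (hence the continuity hypothesis) enter. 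No genuine gap.
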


\begin{prop}
\label{prop:max omega norm of T plus omega y star R ot y equals 2}
Let
$X,Y$ be Banach spaces and assume $Y$ has the alternative Daugavet
property. Consider $\Psi,\Phi \in\ell_{\infty}(B_{X},Y)$ such that $\Phi $
is a quotient map and $\|\Psi\|=1$. Then $\calS_{\Psi}'<\calS_{\Phi }'$
implies that for every $y\in S_{Y}$ and $y^{*}\in Y^{*}$ with $y^{*}\Psi\neq0$
\[
\max_{|\omega|=1}\|\Phi +\omega \Psi_{y^{*}}\otimes y\|=2.
\]
\end{prop}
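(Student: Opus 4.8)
The plan is to mimic the proof of Proposition~\ref{prop:norm of T plus y star R ot y equals 2}, using the ``weak'' analogues of the tools we already have. By Remark~\ref{rem:remark ade} and Theorem~\ref{thm:generalization alternative daugavet center one map}(3), it suffices to prove that for every $\varepsilon>0$ there exist $\omega\in\mathbb{T}$ and $x\in B_{X}$ with
\[
|\Psi_{y^{*}}(x)|\geq 1-\varepsilon\quad\mbox{and}\quad\|\omega\Phi(x)+y\|\geq 2-\varepsilon,
\]
since this is exactly condition~(3) of that theorem for the map $x'\otimes y$ with $x'=\Psi_{y^{*}}$, which has norm one (as $\|\Psi\|=1$ and $\Phi$ is a quotient map the relevant norms behave as needed; more precisely one checks $\|\Psi_{y^{*}}\|=1$ directly from the definition).

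So fix $\varepsilon>0$. Since $S'(\Psi_{y^{*}},\varepsilon)\in\calS_{\Psi}'$ and $\calS_{\Psi}'<\calS_{\Phi}'$, there is a weak slice $S'(\Phi_{z^{*}},\mu)\in\calS_{\Phi}'$ with $\mu\leq\varepsilon$ and $S'(\Phi_{z^{*}},\mu)\subset S'(\Psi_{y^{*}},\varepsilon)$. Now apply Lemma~\ref{lem:characterization of when T is a quotient map and Y has the alternative daugavet property}: since $Y$ has the alternative Daugavet property and $\Phi$ is a quotient map, $Y$ has the alternative $\Phi$-Daugavet property with respect to $Y^{*}\Phi\cdot Y$, so in particular $\max_{|\omega|=1}\|\Phi+\omega\,\Phi_{z^{*}}\otimes y\|=1+\|y\|=2$. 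By Theorem~\ref{thm:generalization alternative daugavet center one map}(3) applied to $\Phi_{z^{*}}\otimes y$ with error $\mu$, we obtain $\omega\in\mathbb{T}$ and $x\in B_{X}$ with $|\Phi_{z^{*}}(x)|\geq 1-\mu$ and $\|\omega\Phi(x)+y\|\geq 2-\mu\geq 2-\varepsilon$. The condition $|\Phi_{z^{*}}(x)|\geq 1-\mu$ says precisely $x\in S'(\Phi_{z^{*}},\mu)\subset S'(\Psi_{y^{*}},\varepsilon)$, hence $|\Psi_{y^{*}}(x)|\geq 1-\varepsilon$. Thus both required inequalities hold, and letting $\varepsilon\downarrow 0$ finishes the proof via Theorem~\ref{thm:generalization alternative daugavet center one map}.

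The main point requiring care — the ``hard part'' such as it is — is making sure the bookkeeping with the two rotation parameters $\omega_{1},\omega_{2}$ in Theorem~\ref{thm:generalization alternative daugavet center one map} lines up: condition~(3) there only asks for $|x'(x)|\geq 1-\varepsilon$ (no rotation on the $x'$ side) together with a single rotation $\omega$ on the $\Phi$ side, which is exactly the shape of the estimates we extract from the weak-slice inclusion, so no extra juggling is needed. One should also double-check that $\Phi_{z^{*}}$ and $\Psi_{y^{*}}$ are genuinely norm-one scalar functions so that the weak slices $S'(\cdot,\cdot)$ and the hypotheses of Theorem~\ref{thm:generalization alternative daugavet center one map} are applicable; this is immediate from the definition of $\Phi_{y^{*}}$ together with $y^{*}\Phi\neq 0$. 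The rest is a direct transcription of the strong-slice argument with ``$\re\omega(\cdot)$'' replaced throughout by ``$|\cdot|$'' and the Daugavet property replaced by its alternative version.
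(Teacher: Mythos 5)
Your proof is correct and takes essentially the same route as the paper's: reduce via condition (3) of Theorem~\ref{thm:generalization alternative daugavet center one map}, use $\calS_{\Psi}'<\calS_{\Phi}'$ to nest a weak slice $S'(\Phi_{z^*},\mu)$ with $\mu\le\varepsilon$ inside $S'(\Psi_{y^*},\varepsilon)$, invoke Lemma~\ref{lem:characterization of when T is a quotient map and Y has the alternative daugavet property} to get $\max_{|\omega|=1}\|\Phi+\omega\,\Phi_{z^*}\otimes y\|=2$, and transfer the resulting witness $x$ through the inclusion. No issues.
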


\begin{proof}
We will use Theorem~\ref{thm:generalization alternative daugavet center one map}, 
i.e., we need to show that for every $\varepsilon>0$ there exist
$\omega\in\mathbb{T}$ and $x\in S'(\Psi_{y^{*}},\varepsilon)$
such that
\[
\left\Vert \omega \Phi (x)+y\right\Vert \geq2-\varepsilon.
\]

Since $\calS_{\Psi}'<\calS_{\Phi }'$, there is a slice $S'(\Phi
_{z^{*}},\mu)\in \calS_{\Phi }'$ 
such that $S'(\Phi _{z^{*}},\mu)\subset S'(\Psi_{y^{*}},\varepsilon)$
and $\mu\leq\varepsilon$. The alternative Daugavet property of $Y$
in conjunction with Lemma~\ref{lem:characterization of when T is a quotient map and Y has the alternative daugavet property}
yields the norm equality $\max_{|\omega|=1}\|\Phi +\omega \Phi
_{y^{*}}\otimes y\|=2$. 
Hence another application of 
Theorem~\ref{thm:generalization alternative daugavet center one map}
gives $\omega\in\mathbb{T}$ and $x\in S'(\Phi _{y^{*}},\mu)$
such that
\[
\left\Vert \omega \Phi (x)+y\right\Vert \geq2-\mu\geq2-\varepsilon.
\]

Because of $S'(\Phi _{z^{*}},\mu)\subset S'(\Psi_{y^{*}},\varepsilon)$,
we also have $x\in S'(\Psi_{y^{*}},\varepsilon)$, which completes
the proof.
\end{proof}

\begin{rem}
In the above proposition, the assumption $\calS_{\Psi}'<\calS_{\Phi }'$
cannot be removed. This can be shown by using the functions from
Remark~\ref{rem:strong slice continuity cant be removed ex 1}.
\end{rem}

\begin{thm}
\label{thm:alternative daugavet property weakly compact main theorem}
Let
$X,Y$ be Banach spaces and let $\Phi,\Upsilon,\Psi\in\ell_{\infty}(B_{X},Y)$
with $\|\Phi \|=\|\Upsilon\|=\|\Psi\|=1$. Assume $Y$ has the alternative
$\Phi $-Daugavet
property with respect to $Y^{*}\Upsilon\cdot Y$. Then, if
$\calS_{\Psi}'<\calS_{\Upsilon}'$ 
and $\Psi$ is weakly compact,
\[
\max_{|\omega|=1}\|\Phi +\omega\Psi\|=2.
\]
\end{thm}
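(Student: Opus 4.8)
The plan is to mimic the proof of Theorem~\ref{thm:daugavet property weakly compact main theorem}, replacing every use of Theorem~\ref{thm:generalization daugavet center one map} by Theorem~\ref{thm:generalization alternative daugavet center one map} and every ordinary slice by a weak slice. First I would invoke Remark~\ref{rem:remark ade} to reduce matters to finding, for each $\varepsilon>0$, elements $\omega\in\mathbb{T}$ and $x\in B_X$ with $|\Psi_{y_0^*}(x)|\geq 1-\delta$ and $\|\omega\Phi(x)+y_0/\|y_0\||\geq 2-\varepsilon$ for a suitable functional $y_0^*$; by Theorem~\ref{thm:generalization alternative daugavet center one map}(3) this is exactly what is needed for the conclusion $\max_{|\omega|=1}\|\Phi+\omega\,\Psi_{y_0^*}\otimes y_0\|=2$.

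Next I would run the Radon--Nikod\'ym/Krein argument verbatim: set $K=\overline{\mathrm{co}}(\mathbb{T}\Psi(B_X))$, which is weakly compact and balanced, hence the closed convex hull of its strongly exposed points; pick a strongly exposed point $y_0\in K$ with $\|y_0\|>1-\varepsilon$ and a functional $y_0^*\in Y^*$ defining a slice $S=\{y\in K:\re y_0^*(y)\geq 1-\delta\}$ of diameter $<\varepsilon$ containing $y_0$, where $\delta$ is chosen as in the earlier proof. Since $K$ is balanced, $\sup_{y\in K}|y_0^*(y)|=1$, so with $\psi:=y_0^*\circ\Psi$ we get $\|\psi\|=1$ and hence $S'(\psi,\delta)\in\calS_{\Psi}'$. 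By $\calS_{\Psi}'<\calS_{\Upsilon}'$ there are $\mu\leq\delta$ and a weak slice $S'(\Upsilon_{z^*},\mu)\in\calS_{\Upsilon}'$ with $S'(\Upsilon_{z^*},\mu)\subset S'(\psi,\delta)$.

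Then I would use the hypothesis that $Y$ has the alternative $\Phi$-Daugavet property with respect to $Y^*\Upsilon\cdot Y$: applied to $\Upsilon_{z^*}\otimes y_0$ (noting $\|y_0\|>0$, so one rescales via Remark~\ref{rem:remark ade}(3)) this gives $\max_{|\omega|=1}\|\Phi+\omega\,\Upsilon_{z^*}\otimes y_0\|=1+\|y_0\|$, whence Theorem~\ref{thm:generalization alternative daugavet center one map}(3) supplies $\omega\in\mathbb{T}$ and $x\in S'(\Upsilon_{z^*},\mu)$ with $\|\omega\Phi(x)+y_0/\|y_0\||\geq 2-\mu\geq 2-\varepsilon$. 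The containment of weak slices then forces $x\in S'(\psi,\delta)$, i.e.\ $|y_0^*(\Psi(x))|\geq 1-\delta$; choosing $\lambda\in\mathbb{T}$ with $\lambda\psi(x)=|\psi(x)|$ we get $\re y_0^*(\lambda\Psi(x))\geq 1-\delta$, and since $\lambda\Psi(x)\in K$ (here balancedness of $K$ is used) the slice-diameter estimate gives $\|\lambda\Psi(x)-y_0\|<\varepsilon$. Finally the triangle-inequality computation of the earlier proof, applied to $\|\lambda\omega^{-1}\Phi(x)+\lambda\Psi(x)\|$ or equivalently comparing $\|\Phi(x)+\omega'\Psi(x)\|$ for the appropriate unimodular $\omega'$, yields $\max_{|\omega'|=1}\|\Phi+\omega'\Psi\|\geq 2-3\varepsilon$; letting $\varepsilon\downarrow 0$ and noting the reverse inequality is trivial completes the proof.

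The step I expect to require the most care is bookkeeping the two unimodular scalars: in the alternative setting we no longer get to align $\omega\Phi(x)$ and $\Psi(x)$ with a single rotation, so one must track separately the rotation $\omega$ coming from Theorem~\ref{thm:generalization alternative daugavet center one map} (acting on $\Phi(x)$) and the rotation $\lambda$ needed to make $y_0^*(\Psi(x))$ positive, and then verify that the product $\overline{\omega}\lambda$ is the unimodular scalar witnessing the (ADE)-type inequality for $\Phi+\omega'\Psi$ — this is precisely the manipulation already carried out in the proof of (3)$\Rightarrow$(1) of Theorem~\ref{thm:generalization alternative daugavet center one map}, so it is routine but must be done honestly. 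Everything else transfers mechanically from the proof of Theorem~\ref{thm:daugavet property weakly compact main theorem}.
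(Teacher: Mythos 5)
Your proposal is correct and follows essentially the same route as the paper: the Krein/strongly-exposed-point construction of $y_0$ and $y_0^*$, the passage through the weak slice $S'(\psi,\delta)$ and the hypothesis $\calS_{\Psi}'<\calS_{\Upsilon}'$, the application of the alternative $\Phi$-Daugavet hypothesis to $\Upsilon_{z^*}\otimes y_0$ via Theorem~\ref{thm:generalization alternative daugavet center one map}, and the final triangle-inequality estimate with the witnessing scalar $\overline{\omega_1}\omega_2$ are exactly the paper's argument. The bookkeeping of the two unimodular scalars that you flag as the delicate point is indeed handled in the paper precisely as you describe.
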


\begin{proof}
Denote $K=\overline{\mathrm{co}}(\mathbb{T}\Psi(B_{X}))$ and let
$\varepsilon>0$ be given. In the same way as in the proof of Theorem
\ref{thm:daugavet property weakly compact main theorem}, we may find
$y_{0}\in K$ with $\|y_{0}\|>1-\varepsilon$, $\delta\in(0,\varepsilon/2)$
and $y_{0}^{*}\in Y^{*}$ such that
\[
y\in K,\,\re
y_{0}^{*}(y)\geq1-\delta\quad\Rightarrow\quad\|y-y_{0}\|<\varepsilon
\]
and
\[
\sup_{y\in K}|y_{0}^{*}(y)|=1.
\]
Setting $\psi:=y_{0}^{*}\circ\Psi$, we get
\[
\|\psi\|=\sup_{x\in B_{X}}|y_{0}^{*}(\Psi(x))|=\sup_{y\in K}|y_{0}^{*}(y)|=1,
\]
i.e., $S'(\psi,\delta)\in \calS_{\Psi}'$. From
$\calS_{\Psi}'<\calS_{\Upsilon}'$ 
we deduce the existence of $\mu\leq\delta$ as well as
$S'(\Upsilon_{z^{*}},\mu)\in \calS_{\Upsilon}'$
satisfying $S'(\Upsilon_{z^{*}},\mu)\subset S'(\psi,\delta)$.
Since $Y$ has the alternative $\Phi $-Daugavet property with respect
to $Y^{*} \Upsilon \cdot Y$, we can use Theorem~\ref{thm:generalization
  alternative daugavet center one map}
to get $\omega_{1}\in\mathbb{T}$ and $x\in S'(\Upsilon_{z^{*}},\mu)$
such that
\[
\left\Vert \omega_{1}\Phi (x)+\frac{y_{0}}{\|y_{0}\|}\right\Vert
\geq2-\mu\geq2-\varepsilon.
\]
In particular, $x\in S'(\Upsilon_{z^{*}},\mu)\subset S'(\psi,\delta)$.
Writing $\omega_{2}=|\psi(x)|/\psi(x)$ we observe
\[
\re y_{0}^{*}(\omega_{2}\Psi(x))=\re \omega_{2}\psi(x)=|\psi(x)|\geq1-\delta,
\]
so the fact that $\omega_{2}\Psi(x)\in K$ gives
\[
\|\omega_{2}\Psi(x)-y_{0}\|<\varepsilon.
\]
On the other hand,
\begin{eqnarray*}
\|y_{0}+\omega_{1}\Phi (x)\| & \geq & \left\Vert \omega_{1}\Phi
 (x)+\frac{y_{0}}{\|y_{0}\|}\right\Vert -\left\Vert
 y_{0}-\frac{y_{0}}{\|y_{0}\|}\right\Vert \\
 & = & \left\Vert \omega_{1}\Phi
 (x)+\frac{y_{0}}{\|y_{0}\|}\right\Vert -|\,\|y_{0}\|-1|\\
 & \geq & 2-2\varepsilon.
\end{eqnarray*}
Altogether
\begin{eqnarray*}
\max_{|\omega|=1}\|\Phi +\omega\Psi\| & \geq & \|\Phi
 +\overline{\omega_{1}}\omega_{2}\Psi\|\\
 & \geq & \|\Phi (x)+\overline{\omega_{1}}\omega_{2}\Psi(x)\|\\
 & = & \|\omega_{1}\Phi (x)+\omega_{2}\Psi(x)\|\\
 & \geq & \|\omega_{1}\Phi (x)+y_{0}\|-\|\omega_{2}\Psi(x)-y_{0}\|\\
 & \geq & 2-3\varepsilon,
\end{eqnarray*}
which proves the assertion because $\varepsilon>0$ was chosen
arbitrarily.
\end{proof}

\begin{cor}
\label{cor:daugavet property weakly compact first corollary-1}Let
$X,Y$ be Banach spaces and assume $Y$ has the alternative Daugavet
property. Consider $\Phi ,\Psi\in\ell_{\infty}(B_{X},Y)$ such that $\Phi $
is a quotient map and $\|\Psi\|=1$. If $\calS_{\Psi}'<\calS_{\Phi }'$
and $\Psi$ is weakly compact, then
\[
\max_{|\omega|=1}\|\Phi +\omega\Psi\|=2.
\]
\end{cor}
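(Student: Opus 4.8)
The plan is to deduce this corollary from Theorem~\ref{thm:alternative daugavet property weakly compact main theorem} in exactly the same way that Corollary~\ref{cor:daugavet property weakly compact first corollary} was deduced from Theorem~\ref{thm:daugavet property weakly compact main theorem}. The only thing to check is that the hypotheses of the theorem are met with the choice $\Upsilon=\Phi$.

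First I would observe that since $\Phi$ is a quotient map and $Y$ has the alternative Daugavet property, Lemma~\ref{lem:characterization of when T is a quotient map and Y has the alternative daugavet property} tells us that $Y$ has the alternative $\Phi$-Daugavet property with respect to $Y^{*}\Phi\cdot Y$. Next, taking $\Upsilon=\Phi$ in Theorem~\ref{thm:alternative daugavet property weakly compact main theorem}, the hypothesis $\calS_{\Psi}'<\calS_{\Upsilon}'$ becomes exactly $\calS_{\Psi}'<\calS_{\Phi}'$, which is assumed, and $\Psi$ is weakly compact by assumption. Note also that $\|\Phi\|=\|\Upsilon\|=1$ holds since a quotient map has norm one (its image of the open unit ball is the open unit ball), and $\|\Psi\|=1$ is assumed. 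Hence all hypotheses of the theorem are satisfied, and it yields $\max_{|\omega|=1}\|\Phi+\omega\Psi\|=2=\|\Phi\|+\|\Psi\|$, which is the desired conclusion.

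There is essentially no obstacle here; the corollary is a direct specialization. The only minor point worth stating explicitly in the write-up is the reduction via Lemma~\ref{lem:characterization of when T is a quotient map and Y has the alternative daugavet property}, which packages the ``quotient map plus alternative Daugavet property of $Y$'' assumption into the ``alternative $\Phi$-Daugavet property with respect to $Y^{*}\Phi\cdot Y$'' needed by the theorem. Accordingly, the proof is a single short paragraph.

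\begin{proof}
By Lemma~\ref{lem:characterization of when T is a quotient map and Y has the alternative daugavet property}, since $\Phi$ is a quotient map and $Y$ has the alternative Daugavet property, $Y$ has the alternative $\Phi$-Daugavet property with respect to $Y^{*}\Phi\cdot Y$. Applying Theorem~\ref{thm:alternative daugavet property weakly compact main theorem} with $\Upsilon=\Phi$, and using that $\calS_{\Psi}'<\calS_{\Phi}'$ and that $\Psi$ is weakly compact, we obtain $\max_{|\omega|=1}\|\Phi+\omega\Psi\|=2$.
\end{proof}
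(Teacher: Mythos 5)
Your proof is correct and follows exactly the paper's own argument: invoke Lemma~\ref{lem:characterization of when T is a quotient map and Y has the alternative daugavet property} to obtain the alternative $\Phi$-Daugavet property with respect to $Y^{*}\Phi\cdot Y$, then apply Theorem~\ref{thm:alternative daugavet property weakly compact main theorem} with $\Upsilon=\Phi$. No issues.
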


\begin{proof}
$Y$ has the alternative $\Phi $-Daugavet property with respect to
$Y^{*}\Phi \cdot Y$
by Lemma~\ref{lem:characterization of when T is a quotient map and Y has the alternative daugavet property}.
Therefore $\Psi$ satisfies the alternative $\Phi $-Daugavet equation
according to Theorem~\ref{thm:alternative daugavet property weakly
  compact main theorem}.
\end{proof}

\begin{rem}
In the above corollary, the assumption $\calS_{\Psi}'<\calS_{\Phi }'$
cannot be dropped. For instance, this follows with the help of the
functions constructed 
in Remark~\ref{rem:strong slice continuity cannot be removed ex 2}.
\end{rem}

\begin{thm}
\label{thm:alternative daugavet property weakly compact second main
  theorem}
Let
$X,Y$ be Banach spaces and let $\mathcal{Z}$ be a subspace of
$\ell_{\infty}(B_{X})$.
Assume $\Phi \in\ell_{\infty}(B_{X},Y)$ with $\|\Phi \|=1$. Then the following
are equivalent:
\begin{enumerate}
\item[(1)]
For every $x'\in\mathcal{Z}$ and every $y\in Y$,
$x'\otimes y$ satisfies $\mathrm{(}\Phi $-$\mathrm{ADE)}$.
\item[(2)]
For every $x'\in S_{\mathcal{Z}}$, every $y\in S_{Y}$,
and every $\varepsilon>0$, there exist $\omega_{1},\omega_{2}\in\mathbb{T}$
and $y\in B_{X}$ such that
\[
\re \omega_{1}x'(x)\geq1-\varepsilon\quad\mbox{and}\quad\left\Vert
  \omega_{2}\Phi (x)+y\right\Vert \geq2-\varepsilon.
\]
\item[(3)]
For every $x'\in S_{\mathcal{Z}}$, every $y\in S_{Y}$,
and every $\varepsilon>0$, there exist $\omega\in\mathbb{T}$ and
$x\in B_{X}$ such that
\[
\left|x'(x)\right|\geq1-\varepsilon\quad\mbox{and}\quad\left\Vert
  \omega \Phi (x)+y\right\Vert \geq2-\varepsilon.
\]
\item[(4)] 
Every weakly compact $\Psi\in\ell_\infty(B_X,Y)$ such that $y^*\circ
\Psi\in \mathcal{Z}$ for all $y^*\in Y^*$ satisfies
$\mathrm{(}\Phi $-$\mathrm{ADE)}$.
\end{enumerate}
\end{thm}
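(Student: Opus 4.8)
The plan is to mirror the proof of Theorem~\ref{thm:daugavet property weakly compact second main theorem}, substituting the "alternative" counterparts of the tools used there: Theorem~\ref{thm:generalization alternative daugavet center one map} in place of Theorem~\ref{thm:generalization daugavet center one map}, and Theorem~\ref{thm:alternative daugavet property weakly compact main theorem} in place of Theorem~\ref{thm:daugavet property weakly compact main theorem}. The scheme will be: (1)$\Leftrightarrow$(2)$\Leftrightarrow$(3) directly from the one-map characterisation, then (1)$\Rightarrow$(4) via the weakly compact main theorem, and (4)$\Rightarrow$(1) because rank-one maps are weakly compact.

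First I would establish (1)$\Leftrightarrow$(2)$\Leftrightarrow$(3). The cases $x'=0$ or $y=0$ being trivial, Remark~\ref{rem:remark ade}(3) lets me assume in~(1) that $x'\in S_{\mathcal{Z}}$ and $y\in S_{Y}$: first rescale $x'$ by $1/\|x'\|$, then rescale by $\|y\|$ to normalise $y$, all while keeping $\Phi$ fixed. For such $x'$ and $y$, since $\|\Phi\|=1$, the assertion that $x'\otimes y$ satisfies $(\Phi$-$\mathrm{ADE})$ reads $\max_{|\omega|=1}\|\Phi+\omega x'\otimes y\|=2$, which is exactly condition~(1) of Theorem~\ref{thm:generalization alternative daugavet center one map}; its conditions~(2) and~(3) are, verbatim, conditions~(2) and~(3) of the present theorem (with $y/\|y\|=y$). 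Hence the three conditions are equivalent.

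Next, (1)$\Rightarrow$(4). Let $\Psi$ be weakly compact with $y^{*}\circ\Psi\in\mathcal{Z}$ for all $y^{*}\in Y^{*}$; the case $\Psi=0$ is trivial, and using Remark~\ref{rem:remark ade}(3) together with the fact that $\mathcal{Z}$ is a subspace (so that $y^{*}\circ(\alpha\Psi)=\alpha(y^{*}\circ\Psi)\in\mathcal{Z}$ and $\alpha\Psi$ is still weakly compact) I may assume $\|\Psi\|=1$. Every element of $Y^{*}\Psi\cdot Y$ has the form $(y^{*}\Psi)\otimes y$ with $y^{*}\Psi\in\mathcal{Z}$, so by~(1) each such map satisfies $(\Phi$-$\mathrm{ADE})$; that is, $Y$ has the alternative $\Phi$-Daugavet property with respect to $Y^{*}\Psi\cdot Y$. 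Applying Theorem~\ref{thm:alternative daugavet property weakly compact main theorem} with $\Upsilon=\Psi$ — and observing that $\calS_{\Psi}'<\calS_{\Psi}'$ holds trivially — yields $\max_{|\omega|=1}\|\Phi+\omega\Psi\|=2=\|\Phi\|+\|\Psi\|$, which is $(\Phi$-$\mathrm{ADE})$. For (4)$\Rightarrow$(1): given $x'\in\mathcal{Z}$ and $y\in Y$, the map $x'\otimes y$ has range inside the one-dimensional space $\mathbb{K}y$ and bounded image, so it is weakly compact, and $y^{*}\circ(x'\otimes y)=y^{*}(y)\,x'\in\mathcal{Z}$ for every $y^{*}\in Y^{*}$; thus~(4) applies and $x'\otimes y$ satisfies $(\Phi$-$\mathrm{ADE})$.

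I do not expect a genuine obstacle: once the alternative-Daugavet analogues developed in Sections~\ref{chap:Preliminary-Generalities} and~\ref{chap:Weak-Slice-Continuity} are available, the argument is essentially bookkeeping. The only points needing a little care are the reductions to norm-one $x'$, $y$ and $\Psi$ through Remark~\ref{rem:remark ade}(3), and the observation that the hypothesis $y^{*}\circ\Psi\in\mathcal{Z}$ survives scalar multiplication, which is where linearity of $\mathcal{Z}$ is used.
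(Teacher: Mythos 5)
Your proof is correct and follows essentially the same route as the paper: (1)$\Leftrightarrow$(2)$\Leftrightarrow$(3) via Theorem~\ref{thm:generalization alternative daugavet center one map}, (1)$\Rightarrow$(4) via Theorem~\ref{thm:alternative daugavet property weakly compact main theorem} with $\Upsilon=\Psi$ and the trivial relation $\calS_{\Psi}'<\calS_{\Psi}'$, and (4)$\Rightarrow$(1) because finite-rank maps are weakly compact. The extra care you take with the normalisations and the linearity of $\mathcal{Z}$ is sound but only fills in details the paper leaves implicit.
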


\begin{proof}
The equivalence of (1), (2) and (3) is a consequence of
Theorem~\ref{thm:generalization alternative daugavet center one map}.
The implication (1) $\Rightarrow$ (4) follows from
Theorem~\ref{thm:alternative daugavet property weakly compact main theorem}
since trivially $\calS_{\Psi}'<\calS_{\Psi}'$ for any
$\Psi$ as in~(4).
The direction (4) $\Rightarrow$ (1) is true because finite-rank maps
are weakly compact.
\end{proof}

The following corollary is analogous to Corollary~\ref{cor3.16}. 

\begin{cor}
Let $X_{1},\dots,X_{n},Y$ be Banach spaces and consider a continuous
multilinear map $B_{0}:X_{1}\times\cdots\times X_{n}\to Y$ satisfying
$B_{0}(U_{X_{1}\times\cdots\times X_{n}})=U_{Y}$. Consider the subsets
$R$, $C$ and $WC$ of $L(Y,Y)$ of rank one, compact and weakly
compact operators. Denote $R\circ B_{0}=\{T\circ B_{0}:  T\in R\}$,
$C\circ B_{0}=\{T\circ B_{0}:  T\in C\}$ and $WC\circ
B_{0}=\{T\circ B_{0}:  T\in WC\}$.
Then the following are equivalent:
\begin{enumerate}
\item[(1)]
$Y$ has the alternative Daugavet property.
\item[(2)]
$Y$ has the alternative $B_{0}$-Daugavet property with
respect to $R\circ B_{0}$.
\item[(3)]
$Y$ has the alternative $B_{0}$-Daugavet property with
respect to $C\circ B_{0}$.
\item[(4)]
$Y$ has the alternative $B_{0}$-Daugavet property with
respect to $WC\circ B_{0}$.
\end{enumerate}
\end{cor}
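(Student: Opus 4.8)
The plan is to closely follow the proof of Corollary~\ref{cor3.16}, replacing the Daugavet-property tools by their alternative analogues; the hypotheses imposed on $B_{0}$ here are the same as there. Write $X=X_{1}\times\cdots\times X_{n}$, equipped with one of the standard product norms, and regard $B_{0}$ as an element of $\ell_{\infty}(B_{X},Y)$: it is bounded on $B_{X}$ since a continuous multilinear map is bounded on the unit ball, it is continuous by hypothesis, and $B_{0}(U_{X})=U_{Y}$ by hypothesis, so $B_{0}$ is a quotient map in the sense of Definition~\ref{def:quotient map} (and in particular $\|B_{0}\|=1$, using that $U_{X}$ is dense in $B_{X}$). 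Moreover a rank-one operator $T\in R$ has the form $T=y^{*}\otimes y$, so $(T\circ B_{0})(x)=y^{*}(B_{0}(x))\,y$, that is, $T\circ B_{0}=(y^{*}B_{0})\otimes y$; hence $R\circ B_{0}=Y^{*}B_{0}\cdot Y$.

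First I would prove (1)$\Leftrightarrow$(2): since $B_{0}$ is a quotient map and $R\circ B_{0}=Y^{*}B_{0}\cdot Y$, this is precisely Lemma~\ref{lem:characterization of when T is a quotient map and Y has the alternative daugavet property}. To deal with (2) and (4) I would invoke Theorem~\ref{thm:alternative daugavet property weakly compact second main theorem} with $\Phi=B_{0}$ and the linear subspace $\mathcal{Z}=\{y^{*}\circ B_{0}: y^{*}\in Y^{*}\}$ of $\ell_{\infty}(B_{X})$. Statement~(1) of that theorem — that $x'\otimes y$ satisfies ($B_{0}$-ADE) for all $x'\in\mathcal{Z}$ and $y\in Y$ — coincides with the corollary's statement~(2), because $\{x'\otimes y: x'\in\mathcal{Z},\,y\in Y\}=R\circ B_{0}$. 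Statement~(4) of that theorem — that every weakly compact $\Psi\in\ell_{\infty}(B_{X},Y)$ with $y^{*}\circ\Psi\in\mathcal{Z}$ for all $y^{*}$ satisfies ($B_{0}$-ADE) — implies the corollary's statement~(4), since for $T\in WC$ the map $\Psi=T\circ B_{0}$ is weakly compact (its range is $T(B_{0}(B_{X}))$, with $B_{0}(B_{X})$ bounded) and satisfies $y^{*}\circ\Psi=(T^{*}y^{*})\circ B_{0}\in\mathcal{Z}$. Combined with the equivalence of statements~(1) and~(4) of the theorem, this yields (2)$\Rightarrow$(4). Finally (4)$\Rightarrow$(3)$\Rightarrow$(2) follows from $R\subset C\subset WC$, hence $R\circ B_{0}\subset C\circ B_{0}\subset WC\circ B_{0}$.

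The one point requiring care — and the likely obstacle — is the comparison of the corollary's statement~(4) with statement~(4) of Theorem~\ref{thm:alternative daugavet property weakly compact second main theorem}: one obtains only the inclusion of $WC\circ B_{0}$ into the family of weakly compact $\Psi$ with $y^{*}\circ\Psi\in\mathcal{Z}$ for all $y^{*}$, since a general such $\Psi$ need not factor as $T\circ B_{0}$. As in Corollary~\ref{cor3.16} this is harmless: the corollary's (2) is the theorem's (1), which is equivalent to the theorem's (4), which forces the corollary's (4), which in turn implies the corollary's (2) via $R\subset WC$; together with (1)$\Leftrightarrow$(2) and (4)$\Rightarrow$(3)$\Rightarrow$(2), all four assertions are seen to be equivalent.
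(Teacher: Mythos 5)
Your proposal is correct and follows essentially the same route as the paper: (1)$\Leftrightarrow$(2) via the alternative-Daugavet quotient-map lemma after identifying $R\circ B_{0}=Y^{*}B_{0}\cdot Y$, (2)$\Leftrightarrow$(4) via Theorem~\ref{thm:alternative daugavet property weakly compact second main theorem} with $\mathcal{Z}=\{y^{*}\circ B_{0}: y^{*}\in Y^{*}\}$, and (4)$\Rightarrow$(3)$\Rightarrow$(2) from $R\subset C\subset WC$. Your extra care in closing the loop between the corollary's statement~(4) and the larger class of weakly compact maps appearing in the theorem is a welcome elaboration of a point the paper leaves implicit.
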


\begin{proof}
The equivalence of (1) and (2) is due to Lemma~\ref{lem:characterization of when T is a quotient map and Y has the alternative daugavet property}.
(2) and (4) are equivalent by letting $\mathcal{Z}=\{y^{*}\circ
B_{0}:  y^{*}\in Y^{*}\}$
in Theorem~\ref{thm:alternative daugavet property weakly compact
  second main theorem}.
The implications (4) $\Rightarrow$ (3) $\Rightarrow$ (2) follow
from the inclusions $R\subset C\subset WC$.
\end{proof}


\section{Local $\Phi$-Daugavet type properties and applications}

 After the explanation of our main results given in the previous part
 of the paper, we are ready to present more technical versions of the
 tools obtained there. All of them can be  proved using the same
 arguments and are useful for  applications. 
 Essentially, we introduce  the notion of norm determining set $\Gamma
 \subset B_X$ for a class of functions and some new elements that
 allow to define the notion of $\Phi$-Daugavet property with respect
 to particular sets of scalar functions and vectors in $Y$, with a
 norm that can be defined as the supremum of the evaluation of the
 functions involved just for some subset of vectors in $S_X$. 

In this section all Banach spaces are supposed to be $\R$-vector
  spaces for simplicity of notation. 

  Let $X$ and $Y$ be Banach spaces, and let $V \subset
\ell_\infty(B_X,Y)$.
We say that a subset $\Gamma \subset B_{X}$ is \textit{norm determining}
for $V$ if
$$
\| \Psi\| = \|\Psi\|_\Gamma := \sup_{x \in \Gamma} \| \Psi(x)\|
$$
for all $\Psi \in V$.

 Let us start by formulating a version of Theorem \ref{thm:generalization
 daugavet center one map} considering norm determining subsets for the
 functions involved. Its proof follows the same lines as the proof
 of that theorem, so we omit it.

\begin{proposition} \label{geom0}
Let $X$ and $Y$ be  Banach spaces and let $\Gamma \subset B_X$. Let
$\Phi \in \ell_\infty(B_X,Y)$ be a norm one 
map, and consider a norm one function $x' \in \ell_\infty(B_X)$. Let
$y \in S_Y$. The following assertions are
equivalent.
\begin{itemize}
\item[(1)]
$ \| \Phi+ x' \otimes y\|_\Gamma=2$.
\item[(2)]
For every $\varepsilon>0$  there is  some
$\omega \in \mathbb T$ and an element  $x \in S(\omega x',\varepsilon)
\cap \Gamma$  such that
$$
\| \omega \Phi(x) + y\| \ge 2- 2 \varepsilon.
$$
\end{itemize}
\end{proposition}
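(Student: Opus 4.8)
The plan is to mimic the proof of Theorem~\ref{thm:generalization daugavet center one map} verbatim, replacing the supremum over all of $B_X$ by the supremum over $\Gamma$ and keeping track of the fact that the witnessing points must land in $\Gamma$. Throughout we have $\|\Phi\|=1$, $\|x'\|=1$, $\|y\|=1$, so $\|\Phi+x'\otimes y\|_\Gamma\le 2$ automatically, and the content is the reverse inequality, respectively the extraction of the geometric witnesses.

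First I would prove $(1)\Rightarrow(2)$. Assume $\|\Phi+x'\otimes y\|_\Gamma=2$ and fix $\varepsilon>0$. By definition of $\|\cdot\|_\Gamma$ there is $x\in\Gamma$ with $\|\Phi(x)+x'(x)y\|\ge 2-\frac{\varepsilon}{2}$. Exactly as in the cited theorem, $\|\Phi(x)+x'(x)y\|\le\|\Phi(x)\|+|x'(x)|\le 1+|x'(x)|$, so $|x'(x)|\ge 1-\frac{\varepsilon}{2}$; setting $\omega=|x'(x)|/x'(x)\in\mathbb T$ gives $\re\omega x'(x)=|x'(x)|\ge 1-\varepsilon$, i.e.\ $x\in S(\omega x',\varepsilon)\cap\Gamma$. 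Then the triangle-inequality chain from the original proof,
\[
2-\tfrac{\varepsilon}{2}\le\|\Phi(x)+x'(x)y\|=\|\omega\Phi(x)+\omega x'(x)y\|\le\|\omega\Phi(x)+y\|+|\omega x'(x)-1|=\|\omega\Phi(x)+y\|+\bigl|\,|x'(x)|-1\,\bigr|\le\|\omega\Phi(x)+y\|+\tfrac{\varepsilon}{2},
\]
yields $\|\omega\Phi(x)+y\|\ge 2-\varepsilon\ge 2-2\varepsilon$, which is $(2)$.

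For $(2)\Rightarrow(1)$, fix $\varepsilon>0$ and take $\omega\in\mathbb T$ and $x\in S(\omega x',\varepsilon)\cap\Gamma$ with $\|\omega\Phi(x)+y\|\ge 2-2\varepsilon$. Reproducing the $(2)\Rightarrow(1)$ estimate of Theorem~\ref{thm:generalization daugavet center one map},
\[
2-2\varepsilon\le\|\omega\Phi(x)+y\|=\|\Phi(x)+\overline\omega y\|\le\|\Phi(x)+x'(x)y\|+\|\overline\omega y-x'(x)y\|=\|\Phi(x)+x'(x)y\|+|1-\omega x'(x)|,
\]
and $|1-\omega x'(x)|\le\sqrt{2\varepsilon}$ by Lemma~\ref{lem:x in S(x',epsilon) implies |1-x'x| leq 2epsilon stefan} applied to $\omega x'$ (valid since $\re\omega x'(x)\ge 1-\varepsilon$). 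Hence $\|\Phi(x)+x'(x)y\|\ge 2-2\varepsilon-\sqrt{2\varepsilon}$, and since $x\in\Gamma$ this forces $\|\Phi+x'\otimes y\|_\Gamma\ge 2-2\varepsilon-\sqrt{2\varepsilon}$. Letting $\varepsilon\downarrow 0$ and using $\|\Phi+x'\otimes y\|_\Gamma\le 2$ gives equality.

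There is no real obstacle here: the only point to be slightly careful about is that in both directions the chosen $x$ must simultaneously lie in $\Gamma$ and in the (genuine, not weak) slice $S(\omega x',\varepsilon)$, and this is exactly what the argument above tracks; the constant $2-2\varepsilon$ rather than $2-\varepsilon$ in $(2)$ is harmless since $\varepsilon$ is arbitrary. Everything else is the triangle inequality plus Lemma~\ref{lem:x in S(x',epsilon) implies |1-x'x| leq 2epsilon stefan}, just as the authors indicate when they say the proof follows the same lines.
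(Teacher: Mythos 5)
Your proof is correct and is exactly the adaptation the authors have in mind: the paper omits the argument, stating only that it ``follows the same lines'' as Theorem~\ref{thm:generalization daugavet center one map}, and your proposal carries that proof over verbatim while correctly tracking that the witnessing point lies in $\Gamma$ and using Lemma~\ref{lem:x in S(x',epsilon) implies |1-x'x| leq 2epsilon stefan} for the reverse direction. The only cosmetic difference is that you keep the honest bound $\sqrt{2\varepsilon}$ where the original proof writes $2\varepsilon$; both vanish as $\varepsilon\downarrow 0$, so nothing changes.
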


\begin{remark}
Notice that the condition in the  result above  implies that for  a
norm one scalar
function $x' \in \ell_\infty(B_X,Y)$,
 $$
 2 \le \|x' \otimes y + \Phi\|_\Gamma \le \|x'\|_\Gamma \|y\| + \|\Phi\| \le
 \|x'\|_\Gamma +1,
 $$
 and so $\|x'\|_\Gamma = \|x'\| =1$. Thus $\Gamma$ is norm determining
 for $x'$; the same argument gives that it is so for $\Phi$. 
\end{remark}

Let us define  now some sort of ``local version" of the notion of
$\Phi$-Daugavet property. 

\begin{definition} \label{definu}
Let $X$ and $Y$ be  Banach spaces and let $\Phi:B_X \to Y$ be a norm one
function. Let $\Gamma \subset B_X$ be a norm determining  set for $\Phi$ and
consider subsets $\mathcal W \subset \ell_\infty(B_X)$ and $\Delta
\subset S_Y$. We say that  $Y$ has 
the $\Phi$-Daugavet property with
respect to $(\Gamma,\mathcal W, \Delta)$ if for every $x' \in \mathcal
W$ and $y \in \Delta$
$$
\sup_{x \in \Gamma} \|\Phi(x)+x'(x)y\|=2.
$$
\end{definition}

The reader can notice that this definition is related to the one of
Daugavet centre given in
Definition~1.2 of \cite{boseka} and that of the almost \DP\ from
\cite{houston2011}. 

Let us provide a concrete example of a function $\Phi$ and sets $\Gamma$,
$\mathcal W$ and $\Delta$ for which every Banach space has the
$\Phi$-Daugavet property with respect to $(\Gamma,\mathcal W,
\Delta)$. 

\begin{example} \label{exsim}
Let $X$ be a real Banach space and take $Y=X$. Consider the sets $\Gamma = B_X$,
$$
\mathcal W= \{x' \in \ell_{\infty}(B_X): |x'(x)|=1 \
\text{and}\ x'(x)=x'(-x) 
\ \text{for all} \ x \in S_X \},
$$
and $\Delta= S_X$. Let $\Phi:B_X \to X$ be a  norm one function such
that $\Phi(S_X)=S_X$ and $\Phi(-x)=-\Phi(x)$ for all~$x$. Take $\varepsilon
>0$. Fix a norm one function 
$x' \in \mathcal W$. If $y \in S_X$, take $x_0 \in S_X$ such
that $\Phi(x_0)= y$. If $x'(x_0)=1$,  then
$$
\sup_{x \in \Gamma} \| \Phi(x) + x'(x) y \|  \ge \| \Phi(x_0) + x'(x_0)
y \| \ge 2 \|y\|=2.
$$
In case $x'(x_0)=-1=x'(-x_0)$, then $\Phi(-x_0)=- \Phi(x_0)= -y$, and thus
$$
\sup_{x \in \Gamma} \| \Phi(x) + x'(x) y \| \ge \| \Phi(-x_0) + x'(-x_0)
y \| \ge \|{-}y-y\| =2.
$$
Therefore, $X$ has the $\Phi$-Daugavet property with respect to
$(\Gamma,\mathcal W, \Delta)$.

The space $\ell^\infty$ and the function $\Phi(x)=x^3$ show an example
of this situation, although $\ell^\infty$ does not have the Daugavet
property.
\end{example}

The proof of the following result is a direct application of
Proposition~\ref{geom0}.

\begin{corollary} \label{geom1}
Let $X$ and $Y$ be  Banach spaces and consider $\Phi$, $\Gamma$, $\mathcal W$
and $\Delta$ as in Definition \ref{definu}.  The following statements
are equivalent.
\begin{itemize}
\item[(1)]
$Y$ has the $\Phi$-Daugavet property with respect to $(\Gamma,\mathcal
W, \Delta)$.
\item[(2)]
For every $y \in \Delta$, for every $x' \in \mathcal W$ of norm one 
and for every
$\varepsilon >0$ there are $\omega \in \mathbb T$ and an element $x \in
S(\omega x',\varepsilon) \cap \Gamma$
such that
$$
\|\omega \Phi(x)+y\| \ge 2- 2 \varepsilon.
$$
\end{itemize}
\end{corollary}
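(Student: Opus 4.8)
The plan is to deduce Corollary~\ref{geom1} directly from Proposition~\ref{geom0} by unpacking Definition~\ref{definu}. The statement ``$Y$ has the $\Phi$-Daugavet property with respect to $(\Gamma,\mathcal W,\Delta)$'' means precisely that $\sup_{x\in\Gamma}\|\Phi(x)+x'(x)y\|=2$ for all $x'\in\mathcal W$ and $y\in\Delta$. Using the notation $\|\,\cdot\,\|_\Gamma$ for the supremum over $\Gamma$, this reads $\|\Phi+x'\otimes y\|_\Gamma=2$ for all such $x'$ and $y$. Since the only role of $\mathcal W$ and $\Delta$ here is to index over which pairs $(x',y)$ we quantify, the corollary is nothing more than Proposition~\ref{geom0} applied, for each fixed $y\in\Delta$, to each $x'\in\mathcal W$.

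First I would fix $y\in\Delta$ and $x'\in\mathcal W$ of norm one; note $y\in S_Y$ by hypothesis, so both objects meet the standing assumptions of Proposition~\ref{geom0}. Applying that proposition with this $\Phi$, this $\Gamma$, this $x'$, and this $y$, condition~(1) there --- namely $\|\Phi+x'\otimes y\|_\Gamma=2$ --- is equivalent to condition~(2) there --- namely: for every $\varepsilon>0$ there are $\omega\in\mathbb T$ and $x\in S(\omega x',\varepsilon)\cap\Gamma$ with $\|\omega\Phi(x)+y\|\ge 2-2\varepsilon$. Quantifying over all $x'\in\mathcal W$ of norm one and all $y\in\Delta$ turns the ``(1)'' side into statement~(1) of the corollary and the ``(2)'' side into statement~(2) of the corollary, and the equivalence is preserved under this quantification since it holds pairwise. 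One minor bookkeeping point: Definition~\ref{definu} quantifies over all $x'\in\mathcal W$ while statement~(2) of the corollary restricts to norm one $x'$; this is harmless because if some $x'\in\mathcal W$ fails to have norm one, the remark following Proposition~\ref{geom0} shows $\sup_{x\in\Gamma}\|\Phi(x)+x'(x)y\|$ cannot equal $2$ unless $\|x'\|_\Gamma=\|x'\|=1$, so in the presence of statement~(1) every relevant $x'$ is automatically of norm one, and conversely statement~(2) only asserts something for norm one $x'$.

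There is no real obstacle: the work has all been done in Proposition~\ref{geom0}, and the corollary is a direct translation. The only thing to be careful about is matching the normalizations --- that $y$ ranges over $S_Y$ (built into $\Delta\subset S_Y$) and that one restricts to norm one $x'$ so that Proposition~\ref{geom0} applies verbatim --- but as just noted this restriction costs nothing. I would therefore keep the proof to a couple of sentences: fix $y\in\Delta$ and a norm one $x'\in\mathcal W$, observe that ``$\sup_{x\in\Gamma}\|\Phi(x)+x'(x)y\|=2$'' is exactly ``$\|\Phi+x'\otimes y\|_\Gamma=2$'', invoke the equivalence (1)$\Leftrightarrow$(2) of Proposition~\ref{geom0}, and take the conjunction over all such $x'$ and $y$.
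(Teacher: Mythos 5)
Your proposal is correct and coincides with the paper's argument: the paper simply states that the corollary is a direct application of Proposition~\ref{geom0}, which is exactly the pairwise application and quantification over $x'\in\mathcal W$ and $y\in\Delta$ that you carry out. Your extra remark on the norm-one normalization of $x'$ is a reasonable piece of bookkeeping that the paper leaves implicit.
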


\begin{remark} \label{compo}
Let us show that, under the assumption that the function $\Phi$ maps
$B_X$ onto $B_Y$,
the Daugavet property for $Y$ implies the
$\Phi$-Daugavet property with respect to $\Gamma = B_X$, $\mathcal W=
\{x':X \to \mathbb R : x'= y^* \circ \Phi, \, y^* \in S_{Y^*} \}$ and
$\Delta= S_Y$. This case is  canonical, and in a sense also trivial,
since the result is a consequence of some simple
computations. However, there are more examples that show that not all
the cases can be obtained in this way, i.e., there are families of
functions $\mathcal W$ whose elements are not compositions of a given
$\Phi$ and the functionals of $S_{Y^*}$ for which $\Phi$ satisfies
the Daugavet equation.

(1) Let us first show the statement announced above.
Let $Y$ be a Banach space with the Daugavet property and let $\Phi:B_X
\to Y$ satisfy $\Phi(B_X)=B_Y$.
 Let us show that then $Y$ has the
$\Phi$-Daugavet property with respect to $(B_X, \, \mathcal W,
\,S_Y)$, where $\mathcal W = \{x':X \to \mathbb R : x'= y^* \circ \Phi$, 
$y^* \in S_{Y^*} \}$.

To see this, suppose that $\Phi:B_X \to Y$ satisfies
$\Phi(B_X)=B_Y$. Then we claim that for each $\varepsilon >0$, $y^*
\in S_{Y^*}$ and $y \in S_Y$ there is $x \in S(y^* \circ \Phi,
\varepsilon)$ such that
$$
\| \Phi(x) + y \| \ge 2 - 2 \varepsilon.
$$
Indeed, let $\varepsilon >0$, $y \in S_Y$ and $y^* \in
S_{Y^*}$. Then by the Daugavet property for $Y$ there is an element $z
\in S(y^*,\varepsilon)$ such that $\|z+y\| \ge 2 - 2
\varepsilon$. Since $\Phi$ maps $B_X$ onto $B_Y$, we
find $x \in B_X$ such that $\Phi(x)=z \in S(y^*, \varepsilon)$, and so
$\langle \Phi(x), y^* \rangle = y^* \circ \Phi(x) > 1 - \varepsilon$
and $\| \Phi(x) + y \| \ge 2 - 2 \varepsilon$.
An application of Corollary~\ref{geom1} gives the result.

(2) There are also other families of functions $\mathcal W$ for which
    the Daugavet equation is satisfied with a function $\Phi$, but
    they cannot be defined by composition as in (1). For example, take
    $X=Y=C(K)$, 
    where $K$ is a perfect compact Hausdorff space,
    and define $\mathcal W$ as the set of continuous
linear functionals on this space. Consider the function
    $x\mapsto \Phi(x)=x^3$. Clearly, a linear 
    functional cannot be written as a composition of $\Phi$ and some other
    linear functional. However, for each norm one element $y \in
    S_{C(K)}$ we find an element $x \in S_{C(K)}$ such that
    $x^3=y$. This, together with the Daugavet property of $C(K)$,
    implies (2) in Corollary~\ref{geom1}. 
     To see this, just
    take into account that by the Daugavet property of $C(K)$, for each
    $\varepsilon >0$, each $y \in S_{C(K)}$ and each $y^* \in
    S_{C(K)^*}$ there is $x \in S(y^*,\varepsilon/2)$ such that
    $$
    \|x+y\|
    > 2 - 2 ( \varepsilon / 2)= 2- \varepsilon > 2- 2 \varepsilon.
    $$
    Take $z \in S_{C(K)}$ such that
    $z^3=x$, and so
    $\| z^3 + y \| > 2- 2 \varepsilon$.
    Let us show that  $z \in S(y^*,\varepsilon)$ too, that is, (2) in
    Corollary~\ref{geom1} 
    holds.
    Consider the measurable sets defined as $A^+:=\{w \in K: z(w) \ge 0
    \}$ and $A^-:=\{w \in K: z(w) < 0 \}$. Also take the decomposition
    of the measure $\mu$ that defines the functional $y^*$ as a
    difference of positive disjointly supported measures  $\mu= \mu^+ -
    \mu^-$. 
    Then, using that $|z^3| \le |z|$, we get
\bea
    1 - \varepsilon/2 &\le & 
\int_K z^3 \, d \mu \\
&=& 
\int_{A^+} |z^3|\, d \mu^+ + \int_{A^-} |z^3|\, d \mu^-
     - \int_{A^+} |z^3|\, d \mu^-    - \int_{A^-} |z^3| \,d \mu^+\\
&\le&  
\int_{A^+} |z| \,d \mu^+ + \int_{A^-} |z| \,d \mu^- \\
&\le&
\mu^+(A^+) + \mu^-(A^-) \le 1.
\eea
Hence
     $$
      \mu^+(A^-) + \mu^-(A^+) \le \varepsilon/2.
     $$
      Consequently,
\bea
1 &\ge& \int_K z \,d \mu \\
&=& 
\int_{A^+} |z|\, d \mu^+ + \int_{A^-} |z|\, d \mu^-
     - \int_{A^+} |z| \,d \mu^-    - \int_{A^-} |z| \,d \mu^+ \\
&\ge& 
(1- \varepsilon/2) - (\mu^-(A^+) + \mu^+(A^-)) \\
&\ge & 
1 - 2 (\varepsilon/2) = 1- \varepsilon.
\eea
    Then $z \in S(y^*,\varepsilon)$, and we get the result.

 (3) Surjectivity of $\Phi$ is sometimes not needed if the
     sets   $\Gamma$, $\mathcal W$ and $\Delta$ are adequately chosen.
     Take now $X=Y=C(K)$, $\Phi (x)=|x|^{1/4}$ and $\mathcal W$  the
     set of probability measures $\mathcal P(K) \subset
     C(K)^*$. Take also $\Gamma = B_{C(K)^+}$ and $\Delta =
     S_{C(K)^+}$.  Then the $\Phi$-Daugavet property with respect to
     $(\Gamma,\mathcal W, \Delta)$ is satisfied, as a consequence of
     Corollary~\ref{geom1}.  To see this, note that if $y \in
     S_{C(K)^+}$ and $\mu \in \mathcal P(K)$, then for $\omega =1$ we
     obtain by the Daugavet property of $C(K)$, given $\eps>0$,  
a (positive) function
     $x$ of norm one in $S_{C(K)}$ such that
     $\int_K x \,d \mu \ge  1 - \varepsilon$ and $\| x + y \| \ge 2 - 2
     \varepsilon$. Then since $1 \ge x^{1/4} \ge x$ we obtain
     $$
     \| x^{1/4} + y \| \ge \| x + y\| \ge 2- 2 \varepsilon,
     $$
     i.e., the $\Phi$-Daugavet property with respect to
     $(\Gamma,\mathcal W, \Delta)$ is satisfied. Again, the elements
     of $\mathcal P(K) $ cannot be factored through $\Phi$.
\end{remark}

The following result gives the main tool for extending the Daugavet
equation to other functions not belonging to the set of products  of
scalar functions of $\mathcal W$ and elements of the unit sphere of
$Y$. In particular, well-known arguments provide the condition of
the following  theorem on the inclusion of the image of a slice in a
small ball for
the big class of the strong Radon-Nikod\'ym operators, which contains the
weakly compact ones (see  for example the first part of
\cite{ams2000}, or Theorem~1.1 in \cite{pol} for a version directly
related with the present paper).

\begin{theorem} \label{T1}
Let $\Psi: B_X \to Y$ be a norm one function.
If the Banach space $Y$ has the $\Phi$-Daugavet property with respect
to $(\Gamma, \mathcal W, \Delta)$ for $\mathcal W \subset
\ell_\infty(B_X)$, and for 
all $\varepsilon >0$ there are $x' \in \mathcal W$, $\delta >0$ and $y
\in \Delta$ such that for all $\omega \in \mathbb T$,
$\Psi(S(\omega x',\delta) \cap \Gamma) \subset B_\varepsilon
(\overline{\omega} y)$, 
then
$$
\|\Phi + \Psi\|_\Gamma =2.
$$
\end{theorem}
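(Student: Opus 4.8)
The plan is to transcribe, in the ``norm determining'' language, the proof of Proposition~\ref{prop:norm of T plus y star R ot y equals 2} (and of the weakly compact theorems of \S\ref{chap:Strong-Slice-Continuity}): the hypothesis $\Psi(S(\omega x',\delta)\cap\Gamma)\subset B_\varepsilon(\overline\omega y)$ plays exactly the role that the strong slice continuity relation $\calS_\Psi<\calS_\Phi$ plays there. Since $\Gamma\subset B_X$ and $\|\Phi\|=\|\Psi\|=1$, we have $\|\Phi+\Psi\|_\Gamma\le\|\Phi\|+\|\Psi\|=2$ for free, so everything reduces to producing, for each $\varepsilon>0$, a single point $x\in\Gamma$ with $\|\Phi(x)+\Psi(x)\|\ge2-3\varepsilon$.

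First I would fix $\varepsilon\in(0,1)$ and feed it into the hypothesis to obtain $x'\in\mathcal W$, $\delta>0$ and $y\in\Delta$ with $\Psi(S(\omega x',\delta)\cap\Gamma)\subset B_\varepsilon(\overline\omega y)$ for all $\omega\in\mathbb T$. Here $x'$ is norm one: writing the slice $S(\omega x',\cdot)$ presupposes $\|x'\|\le1$ (cf.\ the introduction), while the $\Phi$-Daugavet property with respect to $(\Gamma,\mathcal W,\Delta)$ forces $\|x'\|_\Gamma\ge2-\|\Phi\|_\Gamma\ge1$, so $\|x'\|=1$ and Corollary~\ref{geom1} is applicable to the pair $(x',y)$. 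Set $\eta:=\min\{\delta,\varepsilon\}$. Since $\sup_{x\in\Gamma}\|\Phi(x)+x'(x)y\|=2$, Corollary~\ref{geom1} yields $\omega\in\mathbb T$ and $x\in S(\omega x',\eta)\cap\Gamma$ with $\|\omega\Phi(x)+y\|\ge2-2\eta\ge2-2\varepsilon$.

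The key point is that this very $x$ lies in $S(\omega x',\delta)\cap\Gamma$, so the hypothesis forces $\|\Psi(x)-\overline\omega y\|\le\varepsilon$, i.e.\ $\|\omega\Psi(x)-y\|\le\varepsilon$. Hence
$$
\|\Phi(x)+\Psi(x)\|=\|\omega\Phi(x)+\omega\Psi(x)\|\ge\|\omega\Phi(x)+y\|-\|\omega\Psi(x)-y\|\ge(2-2\varepsilon)-\varepsilon=2-3\varepsilon,
$$
and since $x\in\Gamma$ this gives $\|\Phi+\Psi\|_\Gamma\ge2-3\varepsilon$. Letting $\varepsilon\downarrow0$ completes the argument.

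I do not expect a genuine obstacle; the result is a bookkeeping exercise on top of Corollary~\ref{geom1}. The two points to get right are (i) the normalization of $x'$, so that Corollary~\ref{geom1} may legitimately be invoked, and (ii) choosing the slice width as $\min\{\delta,\varepsilon\}$ rather than $\delta$ or $\varepsilon$ separately, so that the point $x$ supplied by the corollary simultaneously realizes the Daugavet-type lower bound $\|\omega\Phi(x)+y\|\ge2-2\varepsilon$ and sits inside the small ball $B_\varepsilon(\overline\omega y)$ dictated by the hypothesis.
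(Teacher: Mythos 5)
Your proof is correct and is essentially the paper's own argument: obtain $x'$, $\delta$, $y$ from the hypothesis, apply Corollary~\ref{geom1} to the pair $(x',y)$ to get a point $x$ in the relevant slice intersected with $\Gamma$, and combine the two estimates by the triangle inequality to get $2-3\varepsilon$. Your choice of slice width $\min\{\delta,\varepsilon\}$ and the remark that $\|x'\|=1$ merely make explicit two small points the paper glosses over; there is no substantive difference.
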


\begin{proof}
Fix $\varepsilon >0$. By hypothesis there are $x'_0 \in \mathcal W$ and
$ y \in \Delta$ such that for every $\omega \in \mathbb T$, $\|\Psi(x) -
\overline{\omega} y\| < \varepsilon$ for all $x \in S(\omega x'_0,
\delta) \cap \Gamma$.

  By Corollary~\ref{geom1}, for  $x'_0$ and $y$ there is $\omega_0 \in
  \mathbb T$ and $x_0 \in S(\omega_0 x'_0,\delta) \cap \Gamma$ such
  that $\|\omega_0 \Phi(x_0)+ y\| \ge 2 - 2 \varepsilon$.
   Then,
\bea
 \| \Phi + \Psi\|_{\Gamma} &\ge& \| \Phi(x_0) + \Psi(x_0) \|\\
&\ge& \| \Phi(x_0) + \overline{\omega_0} y \| - 
\|\Psi(x_0) -   \overline{\omega_0} y \| \\
 &=& 
\|\omega_0 \Phi(x_0) + y\| - \|\Psi(x_0) - \overline{\omega_0}   y\| \\
&\ge& 2- 2 \varepsilon - \varepsilon
= 2- 3 \varepsilon.
\eea
 Since this happens for each $\varepsilon >0$, we obtain that $\|
  \Phi+ \Psi\|_\Gamma = 2$.
 \end{proof}

The proof of the following corollary is just an application of
Theorem~\ref{T1} for  $\mathcal W   :=\{x'=y^* \circ \Psi :  y^*\in
Y^*$,  
$\|y^* \circ \Psi\|=1\}$, $\Gamma = B_X$ and $\Delta= S_X$, together with
the argument in the proof of Theorem~\ref{thm:daugavet property weakly
  compact main theorem} regarding weakly compact sets that gives the
condition for applying Theorem~\ref{T1}. The same comments regarding
Radon-Nikod\'ym functions given in Remark~\ref{RN} apply in the
present case. 

\begin{corollary}
Let $\Phi:B_X \to Y$ be a norm one function such that $\Phi(B_X)=B_Y$ and
let $\Psi:B_X \to Y$ be a norm one weakly compact function. 
Suppose $Y$ has the $\Phi$-Daugavet property.
Then $\|\Phi + \Psi\|=2$.
\end{corollary}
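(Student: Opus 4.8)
The plan is to deduce this from Theorem~\ref{T1}. I would apply that theorem with $\Gamma=B_X$, with $\Delta=S_Y$, and with
\[
\mathcal W=\{\,y^{*}\circ\Psi : y^{*}\in Y^{*},\ \|y^{*}\circ\Psi\|=1\,\}\subset\ell_{\infty}(B_X);
\]
since $\Gamma=B_X$ we have $\|\cdot\|_{\Gamma}=\|\cdot\|$, so the conclusion $\|\Phi+\Psi\|_{\Gamma}=2$ of Theorem~\ref{T1} is exactly $\|\Phi+\Psi\|=2$. Hence the task reduces to verifying the two hypotheses of Theorem~\ref{T1}: first, that $Y$ has the $\Phi$-Daugavet property with respect to $(\Gamma,\mathcal W,\Delta)$; second, that for every $\varepsilon>0$ there exist $x'\in\mathcal W$, $\delta>0$ and $y\in\Delta$ such that $\Psi(S(\omega x',\delta)\cap\Gamma)\subset B_{\varepsilon}(\overline{\omega}y)$ for all $\omega\in\mathbb{T}$.

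The first hypothesis is where the assumption on $Y$ is used, together with $\Phi(B_X)=B_Y$. By Corollary~\ref{geom1}, what must be produced is, for each norm one $x'\in\mathcal W$, each $y\in S_Y$ and each $\varepsilon>0$, some $\omega\in\mathbb{T}$ and $x\in S(\omega x',\varepsilon)\cap B_X$ with $\|\omega\Phi(x)+y\|\ge 2-2\varepsilon$; the surjectivity of $\Phi$ onto $B_Y$ lets one transport the Daugavet-type behaviour of $Y$ to these slices along the lines of Remark~\ref{compo}(1), which settles this point.

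The second hypothesis is the heart of the matter, and I expect it to be the main obstacle; it is handled by the weak-compactness argument from the proof of Theorem~\ref{thm:daugavet property weakly compact main theorem}. Put $K=\overline{\mathrm{co}}(\mathbb{T}\Psi(B_X))$; by Krein's theorem $K$ is weakly compact, hence the closed convex hull of its strongly exposed points. Given $\varepsilon>0$, pick a strongly exposed point $y_0\in K$ with $\|y_0\|>1-\varepsilon$, together with a normalised exposing functional $y_0^{*}\in Y^{*}$ and a $\delta\le\varepsilon$ such that the slice $\{y\in K:\re y_0^{*}(y)\ge 1-\delta\}$ contains $y_0$ and has diameter less than $\varepsilon$; since $K$ is balanced, $\sup_{y\in K}|y_0^{*}(y)|=1$. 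Then $\psi:=y_0^{*}\circ\Psi$ has $\|\psi\|=\sup_{x\in B_X}|y_0^{*}(\Psi(x))|=\sup_{y\in K}|y_0^{*}(y)|=1$, so $\psi\in\mathcal W$. Moreover, if $x\in S(\omega\psi,\delta)\cap B_X$ then $\omega\Psi(x)\in K$ and $\re y_0^{*}(\omega\Psi(x))=\re\omega\psi(x)\ge 1-\delta$, so $\|\omega\Psi(x)-y_0\|<\varepsilon$; hence, writing $y:=y_0/\|y_0\|\in S_Y=\Delta$, we get $\|\Psi(x)-\overline{\omega}y\|<2\varepsilon$. Thus $\psi$, $\delta$ and $y$ witness the second hypothesis (after running the argument with $\varepsilon/2$ in place of $\varepsilon$), and Theorem~\ref{T1} gives $\|\Phi+\Psi\|=2$.

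Finally, as observed after Theorem~\ref{thm:daugavet property weakly compact main theorem} and in Remark~\ref{RN}, the only feature of $\Psi$ used above is that the closure of $\Psi(B_X)$ is the closed convex hull of its strongly exposed points, so the weak compactness of $\Psi$ can be relaxed to $\Psi$ being a Radon-Nikod\'ym function.
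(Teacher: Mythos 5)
Your overall architecture coincides with the paper's: apply Theorem~\ref{T1} with $\Gamma=B_X$, $\Delta=S_Y$ and $\mathcal W=\{y^*\circ\Psi : \|y^*\circ\Psi\|=1\}$, and supply the localisation condition $\Psi(S(\omega x',\delta)\cap\Gamma)\subset B_\varepsilon(\overline{\omega}y)$ by the strongly exposed point argument from the proof of Theorem~\ref{thm:daugavet property weakly compact main theorem}; that second verification is correct as you wrote it. The problem is your treatment of the \emph{first} hypothesis of Theorem~\ref{T1}. Remark~\ref{compo}(1) transports the Daugavet behaviour of $Y$ only to slices defined by functionals of the form $y^*\circ\Phi$, i.e.\ those that factor through $\Phi$; the elements of your $\mathcal W$ are $y^*\circ\Psi$ and in general do not factor through $\Phi$. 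Surjectivity of $\Phi$ onto $B_Y$ lets you realise any prescribed $z\in B_Y$ as $\Phi(x)$, but gives you no control whatsoever over $y^*(\Psi(x))$ at that preimage, so you cannot place $x$ in the slice $S(\omega(y^*\circ\Psi),\varepsilon)$. This is not a gap that can be filled under your reading of the hypotheses: take $X=Y=C[0,1]$, $\Phi=\Id$ (so $\Phi(B_X)=B_Y$, and $Y$ has the $\Phi$-Daugavet property in the rank-one-linear-operator sense because $C[0,1]$ has the Daugavet property), and
\[
\Psi(f)=\Bigl(1-\tfrac12\|f+\mathbf 1\|\Bigr)\mathbf 1 ,
\]
a norm-one map with norm-compact range. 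Writing $t=\|f+\mathbf 1\|$ and $c=1-\tfrac12 t$, one gets $\|f+c\mathbf 1\|\le\min\bigl(\tfrac32 t,\;2-\tfrac12 t\bigr)\le\tfrac32$, so $\|\Phi+\Psi\|\le\tfrac32<2$. The failure occurs exactly at your first hypothesis: here $S(\Psi_{y^*},\varepsilon)$ is a small ball around $-\mathbf 1$, on which $\|\Phi(f)+\Psi_{y^*}(f)y\|$ stays far from $2$ for most $y\in S_Y$.

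The upshot is that ``$Y$ has the $\Phi$-Daugavet property'' in this corollary cannot mean the global property from Section~2 (all rank-one linear operators); it has to be read, as the sentence preceding the corollary in the paper indicates, as the local $\Phi$-Daugavet property with respect to $(B_X,\mathcal W,S_Y)$ in the sense of Definition~\ref{definu}, for precisely the set $\mathcal W=\{y^*\circ\Psi:\|y^*\circ\Psi\|=1\}$. Under that reading the first hypothesis of Theorem~\ref{T1} is an assumption rather than something to be derived, and your verification of the second hypothesis (together with your closing remark on Radon--Nikod\'ym functions, which matches Remark~\ref{RN}) completes the argument. As written, though, your derivation of the first hypothesis from surjectivity is the step that fails.
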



We finish the paper by showing some special new tools for obtaining
applications in the case of $C(K)$-spaces and $L^1(\mu)$-spaces. 

\subsection{A general test for the $\Phi$-Daugavet property: the case of
 functions on $C(K)$-spaces}

The requirement $\Psi(S(\omega x',\delta) \cap \Gamma) \subset
B_\varepsilon (\overline{\omega} y)$ 
in  Theorem~\ref{T1} seems to be a difficult property to
check directly . The next result provides a simpler test that can be used in
some cases.  We will use
this new tool to analyse the Daugavet equation for functions on
$C(K)$-spaces.

\begin{proposition} \label{test}
Let $X$ be a Banach space.
Let $z \in S_{X}$, $K >0$  and let $\Phi, \Psi:B_X \to X$ be norm one
functions. Take a subset $B \subset B_X$. The following statements are
equivalent.
\begin{itemize}
\item[(1)]
There is a w$^*$-compact convex set $V \subset {X^*}$
such that for all finite sequences $x_1,\dots ,x_n \in B$
 and positive scalars $\alpha_1,\dots ,\alpha_n$ such that $\sum_{i=1}^n
 \alpha_i=1$ we have
$$
\sum_{i=1}^n \alpha_i \|\Psi(x_i) - z\| \le K \sup_{x^* \in V} \Bigl( 1-
\Bigl\langle \sum_{i=1}^n \alpha_i\Phi(x_i),x^* \Bigr\rangle \Bigr).
$$
\item[(2)]
For each $\varepsilon >0$ there exists $x^*_0 \in V$
such that
$$
\|\Psi(x) - z\| \le K (1- \langle \Phi(x),x^*_0 \rangle)
$$
for all $x \in B_X$.
\end{itemize}
These equivalent properties imply that for each $\varepsilon >0$ there
exists $x^*_0 \in V$ such that $\Psi(S(x^*_0 \circ \Phi,\varepsilon) \cap
B)  \subset B_{K \varepsilon}(z)$.
\end{proposition}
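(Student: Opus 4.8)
The equivalence $(1)\Leftrightarrow(2)$ is a convexity statement and the trailing inclusion is a direct consequence of $(2)$. Throughout I write, for $x\in B$,
$$
g_x(x^*):=K\bigl(1-\langle \Phi(x),x^*\rangle\bigr)-\|\Psi(x)-z\|,
$$
an affine, w$^*$-continuous real function of $x^*\in V$. In these terms condition $(1)$ says precisely that $\sup_{x^*\in V}\sum_{i}\alpha_i g_{x_i}(x^*)\ge 0$ for every $x_1,\dots,x_n\in B$ and every collection of positive weights $(\alpha_i)$ with $\sum_i\alpha_i=1$, while $(2)$ says that there is a single $x^*_0\in V$ with $g_x(x^*_0)\ge 0$ for all $x\in B$. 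The implication $(2)\Rightarrow(1)$ is then immediate: averaging the pointwise inequalities $\|\Psi(x_i)-z\|\le K(1-\langle\Phi(x_i),x^*_0\rangle)$ with the weights $\alpha_i$, pulling the convex combination inside the pairing, and bounding the value at $x^*_0$ by the supremum over $V$ gives exactly the inequality in $(1)$.

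For $(1)\Rightarrow(2)$, which is the crux, I would argue by contradiction and compactness. If no common $x^*_0$ works, the w$^*$-open sets $O_x=\{x^*\in V:g_x(x^*)<0\}$, $x\in B$, cover $V$; by w$^*$-compactness of $V$ finitely many suffice, say $O_{x_1},\dots,O_{x_n}$, so that $\min_{1\le i\le n} g_{x_i}(x^*)<0$ for every $x^*\in V$. I would then push $V$ forward by the affine w$^*$-continuous map $F=(g_{x_1},\dots,g_{x_n})\colon V\to\R^n$. Its image $C:=F(V)$ is a \emph{compact convex} subset of $\R^n$ (convexity from affineness of the coordinates together with convexity of $V$, compactness from w$^*$-compactness of $V$), and the covering property says exactly that $C$ misses the closed orthant $[0,\infty)^n$. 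Separating the compact convex set $C$ from the closed convex cone $[0,\infty)^n$ yields a nonzero $\beta\in\R^n$; since the orthant is a cone through the origin, $\beta$ must have all coordinates nonnegative and satisfy $\langle\beta,v\rangle<0$ for all $v\in C$. Discarding the null coordinates and normalising $\beta$ to positive weights $(\alpha_i)$ with $\sum_i\alpha_i=1$ gives $\sum_i\alpha_i g_{x_i}(x^*)<0$ for every $x^*\in V$, hence $\sup_{x^*\in V}\sum_i\alpha_i g_{x_i}(x^*)<0$, contradicting $(1)$. The main obstacle is exactly this passage to finite dimensions and the correct use of separation: one must verify that $F(V)$ is genuinely convex and compact, and that the sign and normalisation constraints turn the separating functional into an honest convex combination, so that hypothesis $(1)$ can be invoked to close the contradiction.

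Finally, the trailing inclusion follows at once from $(2)$. Fix the $x^*_0\in V$ provided by $(2)$. For $x\in S(x^*_0\circ\Phi,\varepsilon)\cap B$ the definition of the slice gives $\langle\Phi(x),x^*_0\rangle=\re\,x^*_0(\Phi(x))\ge 1-\varepsilon$, whence
$$
\|\Psi(x)-z\|\le K\bigl(1-\langle\Phi(x),x^*_0\rangle\bigr)\le K\varepsilon,
$$
that is, $\Psi(x)\in B_{K\varepsilon}(z)$, so $\Psi\bigl(S(x^*_0\circ\Phi,\varepsilon)\cap B\bigr)\subset B_{K\varepsilon}(z)$. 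Since the convex-combination data in $(1)$ live on $B$, conditions $(2)$ and this inclusion are naturally read with $x$ ranging over $B$, which is precisely what the final assertion requires.
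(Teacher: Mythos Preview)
Your argument is correct. The paper's proof invokes Ky Fan's lemma as a black box: it sets up the concave family of affine (hence convex) w$^*$-continuous functions
\[
\Upsilon(x^*)=\sum_{i=1}^n \alpha_i\|\Psi(x_i)-z\|-K\Bigl(1-\Bigl\langle\sum_{i=1}^n\alpha_i\Phi(x_i),x^*\Bigr\rangle\Bigr)
\]
on the w$^*$-compact convex set $V$, observes that (1) says each such $\Upsilon$ has a zero-sublevel point in $V$, and concludes from Ky Fan that a single $x_0^*\in V$ works for all of them. What you have done is essentially reprove this instance of Ky Fan's lemma from scratch: your finite-subcover step, the affine push-forward $F\colon V\to\R^n$, and the separation of the compact convex image from the nonnegative orthant together constitute a standard proof of that minimax statement. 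So the two approaches coincide at the level of ideas; yours is more self-contained, the paper's is shorter once the lemma is on the shelf. Your observation that the quantifier in (2) and the final inclusion should really range over $B$ rather than $B_X$ is also well taken and matches what the paper's own argument actually delivers.
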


\begin{proof}
We shall obtain this result as a consequence of Ky Fan's lemma (see
\cite[p.~40]{Pie}), so it is in essence a consequence of the
Hahn-Banach theorem.  

We only sketch the proof. Consider the
concave set of convex functions $\Upsilon:V \to \mathbb R$ defined by
$$
\Upsilon(x^*):=\sum_{i=1}^n \alpha_i \|\Psi(x_i) - z\| - K \Bigl( 1-
\Bigl\langle 
\sum_{i=1}^n \alpha_i\Phi(x_i),x^* \Bigr\rangle\Bigr),
$$
where $\alpha_i > 0$, $\sum_{i=1}^n \alpha_i =1$ and $x_1,\dots ,x_n \in B$.
The inequality in (1) provides for such a $\Upsilon$ an element
$x^*_\Upsilon \in V$ such that $\Upsilon(x^*_\Upsilon) \le 0$. Ky Fan's Lemma
gives an element $x^*_0 \in V$ such that $\Upsilon(x^*_0) \le 0$ for all
the functions $\Upsilon$ in the family. This proves (1) $\Rightarrow$ (2),
and the converse is obvious.

On the other hand, if
$x \in S(x^*_0 \circ \Phi,\varepsilon) \cap B$, then
$$
\|R(x) - z \| \le K (1- \langle \Phi(x), x^*_0 \rangle ) \le K \varepsilon.
$$
This proves the final statement.
\end{proof}

\begin{example}
Let us show an application of the criterion given in
Proposition~\ref{test}. Let $X=C(K)$ and $V= B_{C(K)^*}$.
Take a positive norm one function $f$ in $C(K)$.
Define the class of functions $C$ by
$$
C=\{  g \in B_{C(K)}: g^2 \le f \le |g| \}.
$$
Let us see that the requirements of Proposition~\ref{test} are
satisfied for $B=C$ and $\Phi$ and $\Psi$ defined by $ \Phi(g)=g^2$
and $\Psi(g)=|g|$. Note that for all positive
functions $h \in B_{C(K)}$, $\mathbf{1}- h \le \mathbf{1}- h^2$. Then
for all $g_1,\dots, 
g_n \in C$ and positive $\alpha_1,\dots ,\alpha_n$ such that
$\sum_{i=1}^n \alpha_i=1$, we obtain
\bea
\sum_{i=1}^n \alpha_i \| |g_i| -\mathbf{1} \| &\le&
\sum_{i=1}^n \alpha_i \| \mathbf{1}- f \| = \| \mathbf{1}- f\|
\le
 \sum_{i=1}^n \alpha_i \|\mathbf{1}- g_i^2 \| \\
&\le& \sup_{x^* \in B_{C(K)^*}} \Bigl(1 - \Bigl\langle \sum_{i=1}^n \alpha_i
g_i^2, x^* \Bigr\rangle\Bigr).
\eea
Consequently, an application of the proposition shows that for each
$\varepsilon >0$ there exists $x^*_0 \in C(K)^* $ such that $\Psi(S(x^*_0
\circ \Phi,\varepsilon) \cap C)  \subset B_{K \varepsilon}(\mathbf{1})$.
\end{example}

Note that for applying Proposition~\ref{test} in a
nontrivial way, it must be assumed that
$S(x^* \circ \Phi,\varepsilon)
\cap B \ne \emptyset$. For example, in the
next corollary the requirement is satisfied, since $B=B_X$.
Note also that the requirement on $\Phi$ of being surjective from
$B_X$ to $B_X$ ensures that the slices $S(x^* \circ \Phi,\varepsilon)$
are not empty themselves.

\begin{corollary}
Let $\Phi,\Psi:B_X \to X$ be norm one functions.
If there exist $z \in S_X$ and $K>0$ such that for all $x_1,\dots ,x_n
\in B_X$ and $\alpha_1,\dots ,\alpha_n\ge0$ such that
$\sum_{i=1}^n \alpha_i =1$ there is an element $x \in B_X$ such that
the inequality
$$
\sum_{i=1}^n \alpha_i \|\Psi(x_i) - z\| \le
K \Bigl\|  x- \sum_{i=1}^n \alpha_i\Phi(x_i) \Bigr\|
$$
holds, then for each $\varepsilon>0$ there exist $\delta >0$ and $x^*_0
\in S_{X^*}$ such that $\Psi(S(x^*_0 \circ \Phi,\delta)) \subset
B_{K\varepsilon}(z)$.
\end{corollary}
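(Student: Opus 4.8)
The plan is to exhibit this corollary as a special case of Proposition~\ref{test}, taking there the subset $B:=B_X$ and the w$^*$-compact convex set $V:=B_{X^*}$ (w$^*$-compactness being Banach--Alaoglu), with the same data $z$, $K$, $\Phi$, $\Psi$. Once hypothesis~(1) of Proposition~\ref{test} is verified for these choices, its closing assertion produces, for each $\varepsilon>0$, a functional $x^*_0\in B_{X^*}$ with $\Psi\bigl(S(x^*_0\circ\Phi,\varepsilon)\cap B_X\bigr)\subset B_{K\varepsilon}(z)$; since $S(x^*_0\circ\Phi,\varepsilon)\subset B_X$ anyway, setting $\delta:=\varepsilon$ gives the conclusion of the corollary (the $x^*_0$ obtained lies a priori in $B_{X^*}$; if $x^*_0=0$ the slice is empty for $\delta<1$ and there is nothing to prove, so one may as well take $x^*_0$ normalised). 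Thus the whole task reduces to checking condition~(1) of Proposition~\ref{test} from the hypothesis of the corollary.

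To that end, fix $x_1,\dots,x_n\in B_X$ and scalars $\alpha_1,\dots,\alpha_n>0$ with $\sum_i\alpha_i=1$, and write $w:=\sum_i\alpha_i\Phi(x_i)$. The hypothesis of the corollary hands us an $x\in B_X$ with $\sum_i\alpha_i\|\Psi(x_i)-z\|\le K\|x-w\|$, so it suffices to establish the purely Banach-space inequality $\|x-w\|\le\sup_{x^*\in B_{X^*}}\bigl(1-\langle w,x^*\rangle\bigr)$, valid for any $x\in B_X$. Indeed, choosing $x^*\in S_{X^*}$ with $\langle x-w,x^*\rangle=\|x-w\|$ by Hahn--Banach, and using $\langle x,x^*\rangle\le\|x\|\,\|x^*\|\le1$, one gets $\|x-w\|=\langle x,x^*\rangle-\langle w,x^*\rangle\le 1-\langle w,x^*\rangle\le\sup_{x^*\in B_{X^*}}\bigl(1-\langle w,x^*\rangle\bigr)$. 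Combining this with the inequality from the hypothesis yields $\sum_i\alpha_i\|\Psi(x_i)-z\|\le K\sup_{x^*\in V}\bigl(1-\langle w,x^*\rangle\bigr)$, which is exactly condition~(1) of Proposition~\ref{test}.

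The analytic substance of the argument---the passage from ``for every convex combination some $x$ works'' to a single uniform majorant of the form $\|\Psi(\cdot)-z\|\le K\bigl(1-\langle\Phi(\cdot),x^*_0\rangle\bigr)$---is already packaged inside Proposition~\ref{test} via Ky Fan's lemma, so nothing new is required there. The only mildly non-routine point is the one-line estimate above: observing that the quantity $\|x-w\|$ that arises naturally in the hypothesis can be replaced by $\sup_{x^*\in B_{X^*}}\bigl(1-\langle w,x^*\rangle\bigr)$, which is precisely the format demanded by Proposition~\ref{test}(1). Beyond this observation and the usual bookkeeping with $\varepsilon$, $\delta$ and the normalisation of $x^*_0$, I do not anticipate a genuine obstacle.
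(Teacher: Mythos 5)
Your proposal is correct and follows essentially the same route as the paper: the hypothesis produces the element $x\in B_X$, the estimate $\|x-\sum_i\alpha_i\Phi(x_i)\|\le\sup_{x^*\in B_{X^*}}\bigl(1-\langle\sum_i\alpha_i\Phi(x_i),x^*\rangle\bigr)$ (using $\langle x,x^*\rangle\le 1$) verifies condition (1) of Proposition~\ref{test} with $V=B_{X^*}$ and $B=B_X$, and the closing assertion of that proposition yields the conclusion. Your additional remarks on normalising $x^*_0$ and on the $\delta=\varepsilon$ bookkeeping are harmless refinements of details the paper leaves implicit.
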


\begin{proof}
Fix some  $x_1,\dots ,x_n \in X$ and $\alpha_1,\dots ,\alpha_n$ and
consider the element $x \in B_X$ given in the statement.
Using the  inequality we obtain
\bea
\sum_{i=1}^n \alpha_i \|\Psi(x_i) - z\|
&\le& K \sup_{x^* \in B_{X^*}}
\Bigl\langle x- \sum_{i=1}^n \alpha_i \Phi(x_i), x^* \Bigr\rangle  \\
&\le& K \sup_{x^* \in B_{X^*}} \Bigl( 1 - \Bigl\langle \sum_{i=1}^n
\alpha_i\Phi(x_i), x^* \Bigr\rangle\Bigr).
\eea
An application of Proposition~\ref{test} gives the result.
\end{proof}

\begin{example}
Take $X=C(K)$ for a perfect $K$, $\Phi(x)=x^2$ and $\Psi(x)=( \int_K x^2
\, d \mu ) y$ for a probability 
measure on $K$ and a fixed function $y \in
S_{C(K)}$. Then taking $z=y$ we get
\bea
\sum_{i=1}^n \alpha_i \Bigl\|\Bigl(\int_K x^2_i \,d \mu\Bigr)y - z\Bigr\|
&\le&
\sum_{i=1}^n \alpha_i \Bigl(1-\int_K x^2_i \,d \mu\Bigr)\|z\| \\
&=& \int_K d \mu - \sum_{i=1}^n \alpha_i \int_K x^2_i \,d \mu \\
&\le&  \Bigl\| \mathbf{1} - \sum_{i=1}^n \alpha_i x_i^2 \Bigr\|
\eea
for each finite set of functions $x_1,\dots ,x_n \in B_{C(K)}$ and $0
\le \alpha_1,\dots ,\alpha_n$ such that $\sum_{i=1}^n \alpha_i =1$.

Consequently, the result holds and for each $\varepsilon >0$ there is
a slice $S(x^*_0 \circ \Phi,\delta)$ such that $\Psi(S(x^*_0 \circ
\Phi,\delta)) \subset B_\varepsilon(z)$.
However, observe that the slices $S(x^*_0 \circ \Phi,\delta)$ can be
empty in this case, and so the Daugavet equation cannot be assured in
general by applying
 Remark~\ref{compo}(1). In fact, the
 equation does not hold if one takes for example $y= - \mathbf{1}$; in this
 case,
 $$
 \sup_{x \in B_{C(K)}} \Bigl\|x^2+ \Bigl(\int_K x^2 \, d\mu\Bigr)(-\mathbf{1})
 \Bigr\| \le 1.
 $$
 However, if we take $y=\mathbf{1}$, we obtain
 $\sup_{x \in B_{C(K)}} \|x^2+ (\int_K x^2 \, d \mu) \mathbf{1} \| =
 2$, and the  Daugavet equation holds.

  Note that
 Remark~\ref{compo}(1)  provides  the Daugavet equation for the
 ``order~$3$ version" of this result, since $\Phi(x)=x^3$ satisfies
 $\Phi(B_{C(K)})=B_{C(K)}$. Therefore, due to the Daugavet property of
 $C(K)$, for every $\mu \in S_{C(K)^*}$ and
 $y \in S_{C(K)}$ we have
 $$
 \sup_{x \in B_{C(K)}} 
\Bigl\| x^3 + \Bigl(\int_K x^3 \,d \mu\Bigr)y\Bigr\|=2.
 $$
\end{example}


\subsection{The case of $L^1(\mu)$-spaces for non-atomic measures $\mu$}

In this subsection we analyse several functions $\Phi$ that  are
natural candidates for being functions $\Phi$ on (the unit ball of)
$L^1$ in the results exposed in the previous sections.

Some cases that are in a sense canonical for applying our results are
the following. The first one given by the function $\Phi_0(f):= |f|$,
$f \in L^1(\mu)$.
The second case is the function $\Phi_\ast:=B_{L^1[0,1]} \to
B_{L^1[0,1]}$ given by the expression
$\Phi_\ast(f)= |f| \ast |f|$, where $\ast$ denotes the convolution in
$L^1[0,1]$; the third one is given by the formula $\Phi_2(f):= (\int_\Omega
|f| \,d \mu) \cdot f$. Adapting the proof of Theorem~2.6 and
Proposition~2.7 in \cite{polL1} 
that is based in some classical arguments for the Daugavet property
in $L^1(\mu)$, we obtain the following results, which can be applied
to these examples.

\begin{lemma} \label{la1}
Let $(\Omega,\Sigma,\mu)$ be a non-atomic measure space. Let
$\mathcal W$ be a set of norm one scalar functions in
$\ell_\infty (B_{L^1(\mu)})$. Let $\Phi:B_{L^1(\mu)} \to L^1(\mu)$ be
a norm one function 
such that $\|\Phi(z)\|=1$ for each $z \in S_{L^1(\mu)}$ and
satisfying also that for each $\delta, \varepsilon >0$ and $x' \in
\mathcal W$
we can find a norm one simple function $z$ such that $\mu(\supp
\Phi(z)) < \delta$ and
$|x'(\Phi(z))| > 1 - \varepsilon$.
Then
$$
\|\Phi + x'\otimes y\| = 2
$$
for all $x'\in \mathcal{W}$, $y\in S_{L^1(\mu)}$. 
\end{lemma}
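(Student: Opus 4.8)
The goal is to show $\|\Phi + x'\otimes y\| = 2$ for every $x'\in\mathcal W$ and $y\in S_{L^1(\mu)}$. By Theorem~\ref{thm:generalization daugavet center one map}, it suffices to produce, for each $\varepsilon>0$, an element $x\in B_{L^1(\mu)}$ and an $\omega\in\mathbb T$ with $\re\omega x'(x)\ge 1-\varepsilon$ and $\|\omega\Phi(x)+y\|\ge 2-\varepsilon$. Since $\|y\|=1$, the classical $L^1$-argument supplies, for any prescribed $\delta>0$, a norm one simple function $y_0$ supported on a set of measure less than $\delta$ with $\|y_0 + y\|$ close to $2$ (take a small piece where $y$ is essentially constant in sign/phase, renormalise). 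The hypothesis on $\Phi$ then gives a norm one simple function $z$ with $\mu(\supp\Phi(z))<\delta$ and $|x'(\Phi(z))|>1-\varepsilon$; choosing $\omega = \overline{x'(\Phi(z))}/|x'(\Phi(z))|$ arranges $\re\omega\,x'(\Phi(z))>1-\varepsilon$. Wait — but $x'$ here is a function on $B_X$, not on $Y$; the cleaner route is to pick $x$ so that $\Phi(x)$ behaves well and use $x'\otimes y$ directly.

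Let me restate the core step. First I would fix $\varepsilon>0$ and apply the standing hypothesis on $\Phi$ with a small $\delta$ (to be chosen) to obtain a norm one simple function $z$ with $S:=\supp\Phi(z)$ of measure $<\delta$ and $|x'(\Phi(z))|>1-\varepsilon$ — here I read $x'$ as evaluated at the point $z\in B_X$ after the composition understood in the lemma's phrasing; more precisely the hypothesis is that $x'(z)$ can be made close to $1$ in modulus while $\Phi(z)$ is concentrated. Since $y\in S_{L^1(\mu)}$ and $\mu$ is non-atomic, I can find a measurable set $A$ disjoint from $S$ with $\int_A |y|\,d\mu$ as close to $1$ as I like (because $\mu(S)<\delta$ and $\|y\|_1=1$, so $\int_{\Omega\setminus S}|y|\,d\mu > 1-\delta$, and the part of $y$ living on $S$ contributes at most $\delta$). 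On $A$, $\Phi(z)$ vanishes a.e., so $\|\Phi(z) + \lambda\, y\|_1 = \int_S |\Phi(z)+\lambda y|\,d\mu + |\lambda|\int_A |y|\,d\mu + \cdots$; bounding $\int_S|\Phi(z)+\lambda y| \ge \int_S|\Phi(z)| - \int_S|\lambda y| \ge 1 - \delta$ (for $|\lambda|\le 1$) gives $\|\Phi(z)+\lambda y\|_1 \ge 2 - 2\delta$ after collecting the disjoint pieces. Setting $\lambda = \overline\omega$ with $\omega$ chosen so that $\omega x'(z)$ is close to $1$ (i.e. $\omega = \overline{x'(z)}/|x'(z)|$, using $|x'(z)|>1-\varepsilon$) yields simultaneously $\re\omega x'(z)\ge 1-\varepsilon$ and $\|\omega\Phi(z)+y\|_1 = \|\Phi(z)+\overline\omega y\|_1 \ge 2-2\delta$.

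Then I would take $\delta\le\varepsilon/2$, so that the pair $(z,\omega)$ witnesses condition (2) of Theorem~\ref{thm:generalization daugavet center one map} (with $\|\Phi\|=1$): $\re\omega x'(z)\ge 1-\varepsilon$ and $\|\omega\Phi(z)+y/\|y\|\| = \|\omega\Phi(z)+y\| \ge 2-\varepsilon \ge \|\Phi\|+1-\varepsilon$. Since $\varepsilon>0$ was arbitrary, the theorem gives $\|\Phi + x'\otimes y\| = \|\Phi\| + \|y\| = 2$, and since $x'\in\mathcal W$ and $y\in S_{L^1(\mu)}$ were arbitrary, the proof is complete.

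**Main obstacle.** The only genuinely delicate point is the interplay between the support of $\Phi(z)$ and the support of $y$: one must exploit non-atomicity of $\mu$ to guarantee that $y$ has almost all of its $L^1$-mass off the small set $\supp\Phi(z)$, which is automatic once $\mu(\supp\Phi(z))$ is small because $\|y\|_1$ is finite and absolutely continuous with respect to $\mu$ — no further choice of $y$ is needed, only the smallness of $\delta$. The rest is the standard disjoint-support additivity of the $L^1$-norm together with the phase adjustment $\omega$, exactly as in the classical proof that $L^1(\mu)$ has the Daugavet property; this is why the lemma is advertised as an adaptation of Theorem~2.6 and Proposition~2.7 of \cite{polL1}.
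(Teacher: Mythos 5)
Your proof is correct and follows essentially the same route as the paper: choose $\delta$ so that $y$ has small mass on sets of measure less than $\delta$, invoke the hypothesis to obtain $z$ with $\Phi(z)$ supported on such a set and $|x'(z)|$ close to $1$, rotate by $\omega$, and combine the disjoint-support additivity of the $L^1$-norm with the slice criterion (Proposition~\ref{geom0}, equivalently Theorem~\ref{thm:generalization daugavet center one map}). One small correction: $\int_S|y|\,d\mu\le\delta$ does \emph{not} follow from $\mu(S)<\delta$ alone (take $y$ concentrated on a tiny subset of $S$); as your closing paragraph itself indicates, $\delta$ must be chosen via the absolute continuity of $A\mapsto\int_A|y|\,d\mu$ so that $\mu(A)<\delta$ forces $\int_A|y|\,d\mu<\varepsilon$, which is exactly how the paper's proof begins. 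Your decision to read the hypothesis as controlling $|x'(z)|$ rather than the literal $|x'(\Phi(z))|$ matches what the paper's own proof does.
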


\begin{proof}
We use Proposition~\ref{geom0}. Let $\varepsilon >0$, $x' \in \mathcal
W$ and $y \in S_{L^1(\mu)}$. Let us show that we can find $\omega$ and
an element $x \in S(\omega x',\varepsilon)$ such that
$$
\| \omega\Phi(x) +y \| > 2-2 \varepsilon.
$$
First note that there exists  $\delta >0$ such that $\int_A |y| \,d\mu <
\varepsilon$ for each $A \in \Sigma$ such that $\mu(A) < \delta$.
By the requirement on $\Phi$ for these $\delta >0$ and $\varepsilon
>0$ and  choosing an $\omega \in \mathbb T$ such
that $\omega x'(z)=|x'(z)|$, we have  that $ z \in S(\omega x',\varepsilon)$.
Thus we obtain
\bea
\|y+ \omega \Phi(z)\| &=&
\int_{\Omega \setminus \supp \Phi(z)} |y| \,d \mu  +
\int_{\supp \Phi(z)} |y + \omega \Phi(z)| \,d \mu \\
&\ge& \|y\| - \int_{\supp \Phi(z)} |y| \,d \mu +
\|\Phi(z)\| -  \int_{\supp \Phi(z)} |y| \,d \mu \\
&>& 2 - 2 \varepsilon.
\eea
Proposition~\ref{geom0} gives the result.
\end{proof}

\begin{lemma} \label{la1-2}
Let $(\Omega,\Sigma,\mu)$ be a  non-atomic measure space. Let
$\mathcal W$ be a set of norm one scalar functions from $L^1(\mu)$
that are weakly sequentially continuous. Let $\Phi:B_{L^1(\mu)} \to
L^1(\mu)$ be a norm one map that maps $S_{L^1(\mu)}$ onto $S_{L^1(\mu)}$. 
Then
$$
\|\Phi + x'\otimes y\| = 2
$$
for all $x'\in \mathcal{W}$, $y\in S_{L^1(\mu)}$. 
\end{lemma}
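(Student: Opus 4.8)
The plan is to mimic the proof of Lemma~\ref{la1}, but to replace the requirement ``there is a norm one simple function $z$ with small-support image and $|x'(\Phi(z))|>1-\eps$'' by an approximation argument that exploits weak sequential continuity of $x'$. First I would fix $\eps>0$, $x'\in\mathcal W$ and $y\in S_{L^1(\mu)}$, and choose $\delta>0$ so that $\int_A|y|\,d\mu<\eps$ whenever $\mu(A)<\delta$. The goal, as before, is to produce $\omega\in\T$ and $x\in S(\omega x',\eps)$ with $\|\omega\Phi(x)+y\|>2-2\eps$; by Proposition~\ref{geom0} this yields $\|\Phi+x'\otimes y\|=2$.

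The key new step is to manufacture, for a given $\delta>0$, a norm one function $z$ with $\mu(\supp\Phi(z))<\delta$ and $|x'(\Phi(z))|$ close to $1$. Since $\mu$ is non-atomic, I would pick a norm one element $z_0$ with $\mu(\supp z_0)<\delta'$ for a suitable $\delta'$ (because in a non-atomic $L^1(\mu)$ the unit sphere contains functions supported on sets of arbitrarily small measure), and more carefully a \emph{sequence} $z_n$ of norm one functions with $\mu(\supp z_n)\to 0$; using surjectivity of $\Phi$ on $S_{L^1(\mu)}$ one can instead pick $u_n\in S_{L^1(\mu)}$ with $\Phi(u_n)=w_n$ where $w_n$ are chosen with shrinking supports. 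Since $\|w_n\|=1$ and $\mu(\supp w_n)\to 0$, the sequence $w_n$ converges weakly to $0$ in $L^1(\mu)$ (functions with vanishing-measure supports and bounded norm are uniformly integrable only trivially — rather, one uses that against any $g\in L^\infty$, $\int w_n g\to 0$, and weak convergence in $L^1$ is tested against $L^\infty=(L^1)^*$). Wait — that would give $x'(w_n)\to x'(0)$, which is the wrong direction. The correct device is: take $w_n$ with shrinking supports but arrange that $w_n$ does \emph{not} converge weakly to $0$; concretely, split a fixed unit vector. Actually the cleanest route is to take a fixed $v\in S_{L^1(\mu)}$, write $v$ as an $L^1$-limit of normalized restrictions $v\cdot\mathbf 1_{A_n}/\|v\cdot\mathbf 1_{A_n}\|$ is impossible since those have support not shrinking. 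Instead I would argue: for each $n$ pick $h_n\in S_{L^1(\mu)}$, $\mu(\supp h_n)<1/n$; passing to a subsequence, $h_n$ converges weakly to some $h\in L^1(\mu)$ (bounded sequences in $L^1$ need not have weakly convergent subsequences, but one can instead work in the bidual or use the Dunford–Pettis-type reduction — here the hypothesis that $x'$ is weakly sequentially continuous is meant precisely so that $x'(h_n)\to x'(h)$ whenever $h_n\to h$ weakly). Since such $x'$ is continuous for the weak topology on sequences and $\|x'\|=1$, we can find a sequence $z_n$ with $\mu(\supp\Phi(z_n))\to 0$ along which $|x'(\Phi(z_n))|$ approaches $\sup|x'|=1$: start from any $g_n$ with $|x'(g_n)|\to 1$, $g_n\in B_{L^1(\mu)}$, and perturb/truncate $g_n$ onto a small-measure set without losing much of the $x'$-value, using weak sequential continuity to control the error.

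The hard part will be exactly this truncation-to-small-support argument while keeping $|x'(\Phi(z))|$ near $1$: weak sequential continuity of $x'$ gives control only along convergent sequences, so I must set things up so that the truncated approximants form a sequence converging weakly to the (norm-near-optimal) original — this is where non-atomicity is essential, since it lets one slice the mass of a near-optimizer into pieces each supported on a set of measure $<\delta$, one of which must still carry a large share of the functional's value (a pigeonhole/averaging over a partition into small-measure pieces, combined with weak continuity to pass to the limit of partial sums). Once such a $z$ is in hand, the rest is routine and identical to Lemma~\ref{la1}: set $\omega$ so that $\omega x'(\text{the relevant argument})=|x'(\cdot)|\ge 1-\eps$, hence the chosen $x$ lies in $S(\omega x',\eps)$, and estimate
\[
\|y+\omega\Phi(z)\|=\int_{\Omega\setminus\supp\Phi(z)}|y|\,d\mu+\int_{\supp\Phi(z)}|y+\omega\Phi(z)|\,d\mu\ge\|y\|-\int_{\supp\Phi(z)}|y|\,d\mu+\|\Phi(z)\|-\int_{\supp\Phi(z)}|y|\,d\mu>2-2\eps,
\]
using $\mu(\supp\Phi(z))<\delta$ and the choice of $\delta$. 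Proposition~\ref{geom0} then yields $\|\Phi+x'\otimes y\|=2$, as required.
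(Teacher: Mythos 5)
Your overall skeleton is the right one and matches the paper's: reduce to Lemma~\ref{la1} by producing, for given $\delta,\varepsilon>0$, a norm-one simple function $w$ with $\mu(\supp w)<\delta$ and $|x'(w)|>1-\varepsilon$, pull it back through the surjection $\Phi\colon S_{L^1(\mu)}\to S_{L^1(\mu)}$ to get $z$ with $\Phi(z)=w$, and then run the disjoint-support estimate. But the existence of such a $w$ is the entire content of this lemma beyond Lemma~\ref{la1} (the paper disposes of it by citing Lemma~2.5 of \cite{polL1}), and you explicitly flag it as ``the hard part'' without proving it. So there is a genuine gap, and the routes you sketch toward closing it do not work. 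First, the assertion that norm-one functions $w_n$ with $\mu(\supp w_n)\to 0$ converge weakly to $0$ is false; worse, such a sequence has \emph{no} weakly convergent subsequence at all, since it fails uniform integrability (take $E=\supp w_N$) and Dunford--Pettis then rules out relative weak compactness. This also kills your fallback of ``passing to a subsequence, $h_n$ converges weakly to some $h$.'' Second, the pigeonhole/averaging idea --- partition the support of a near-optimizer and argue that one small piece still carries a large share of the value --- is a statement about \emph{linear} functionals; for a merely weakly sequentially continuous nonlinear $x'$, the value on a renormalized restriction is unrelated to the value on the whole function.

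The device that actually works (and is what \cite{polL1} does) keeps $\delta$ \emph{fixed} rather than letting the supports shrink to measure zero: take a norm-one simple function $y=\sum_i a_i\chi_{A_i}$ with $|x'(y)|>1-\varepsilon$, and use non-atomicity to choose Rademacher-type subsets $E_n$ with $\mu(E_n\cap A)\approx\theta\,\mu(A)$ for all measurable $A$, where $\theta<\delta/\mu(\supp y)$, so that $\chi_{E_n}\to\theta\mathbf 1$ weak$^*$ in $L^\infty$. The functions $y_n=y\chi_{E_n}/\|y\chi_{E_n}\|_1$ are norm-one simple functions, each supported on a set of measure $<\delta$, \emph{uniformly bounded}, and hence uniformly integrable; they converge weakly to $y$, so weak sequential continuity gives $|x'(y_n)|>1-\varepsilon$ for large $n$. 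Taking $w=y_n$ and $z\in S_{L^1(\mu)}$ with $\Phi(z)=w$ then feeds Lemma~\ref{la1} exactly as you intend, and the concluding estimate you wrote is correct. Without this construction (or an explicit appeal to \cite[Lemma~2.5]{polL1}), the proposal does not constitute a proof.
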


\begin{proof}
Let $x' \in \mathcal W$ and let $\delta, \varepsilon >0$. Since it is
weakly sequentially continuous, by Lemma 2.5 in \cite{polL1}, we can
find a norm one simple function $x$ such that $\mu(\supp x) < \delta$
and
$|x'(\Phi(x))| > 1 - \varepsilon$. The surjectivity of $\Phi$ provides an
element $z \in S_{L^1(\mu)}$ such that $\Phi(z)=x$. This $z$ satisfies
the requirement for $\Phi$ in Lemma~\ref{la1}; hence   the result holds.
\end{proof}

 In order to adapt  the results on weak sequential continuity that are
 shown to be useful in the case of the polynomial Daugavet property
 for $L^1(\mu)$ (see \cite{polL1}), there are two requirements on
 $\Phi$ that are useful and are included in the following
 definition.

In the next proposition,
we call a function $\Phi: B_{L^1(\mu)} \to L^1(\mu)$  \textit{admissible}
if the following requirements are satisfied.

(i) $\Phi$ must send functions of small support to functions of small
support, i.e.,  for each
$\delta >0$ there is a $\delta' >0$ such that for a function $f \in
L^1(\mu)$ with support satisfying $\mu(\supp f)<\delta'$, we have that
$\mu(\supp_{\Phi(f)}) <  \delta$.

(ii)  For all $f \in S_{L^1(\mu)}$, $\|\Phi(f)\|=1$.

Note that the mappings $\Phi_0, \Phi_*$ and $\Phi_2$ mentioned at the
beginning of this subsection are admissible.

\begin{proposition} \label{la2}
Let $(\Omega,\Sigma,\mu)$ be a non-atomic measure space. Let
$\Phi:B_{L^1(\mu)} \to L^1(\mu)$ be a norm one admissible function. Let
$\mathcal W \subset \ell_\infty(B_{L^1(\mu)})$  be a set of norm one
scalar functions from 
$B_{L^1(\mu)}$ to $\mathbb K$ such that $x' \circ \Phi$ is norm one and
weakly sequentially continuous for each $x' \in \mathcal W$. 
Then
$$
\|\Phi + x'\otimes y\| = 2
$$
for all $x'\in \mathcal{W}$, $y\in S_{L^1(\mu)}$. 
\end{proposition}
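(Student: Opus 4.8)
The plan is to reduce Proposition~\ref{la2} to Lemma~\ref{la1}, whose
hypotheses are precisely tailored for this situation. The key point
is that the ``admissibility'' of $\Phi$ together with the weak
sequential continuity of the compositions $x'\circ\Phi$ will let us
produce, for every $\delta,\varepsilon>0$ and every $x'\in\mathcal W$,
a norm one simple function $z$ with $\mu(\supp\Phi(z))<\delta$ and
$|x'(\Phi(z))|>1-\varepsilon$; once this is established, Lemma~\ref{la1}
applies verbatim and yields the conclusion $\|\Phi+x'\otimes y\|=2$
for all $x'\in\mathcal W$ and $y\in S_{L^1(\mu)}$.

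First I would fix $x'\in\mathcal W$, $\delta>0$ and $\varepsilon>0$. By
admissibility condition~(i), choose $\delta'>0$ so that any
$f\in L^1(\mu)$ with $\mu(\supp f)<\delta'$ satisfies
$\mu(\supp\Phi(f))<\delta$. Now consider the scalar function
$p:=x'\circ\Phi$, which by hypothesis is norm one and weakly
sequentially continuous on $B_{L^1(\mu)}$. Since $\|p\|=1$ and since,
by admissibility condition~(ii), $\Phi$ maps $S_{L^1(\mu)}$ into
$S_{L^1(\mu)}$ (so that $p$ attains values of modulus close to~$1$ on
the sphere), I would invoke Lemma~2.5 of \cite{polL1} exactly as in the
proof of Lemma~\ref{la1-2}: applied to the weakly sequentially
continuous norm one functional $p$ and to the parameter $\delta'$, it
furnishes a norm one simple function $z$ with $\mu(\supp z)<\delta'$
and $|p(z)|=|x'(\Phi(z))|>1-\varepsilon$. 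By the choice of $\delta'$ we
then get $\mu(\supp\Phi(z))<\delta$.

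Thus $\Phi$ satisfies the hypothesis of Lemma~\ref{la1} with respect to
the given set $\mathcal W$: for each $\delta,\varepsilon>0$ and each
$x'\in\mathcal W$ there is a norm one simple function $z$ with
$\mu(\supp\Phi(z))<\delta$ and $|x'(\Phi(z))|>1-\varepsilon$. Applying
Lemma~\ref{la1} concludes the proof. The main obstacle, and the place
where care is needed, is the precise invocation of the auxiliary result
from \cite{polL1}: one must check that weak sequential continuity of
$x'\circ\Phi$ (rather than of $x'$ itself, as in Lemma~\ref{la1-2}) is
the right hypothesis to produce a small-support simple function at which
$|x'\circ\Phi|$ is close to~$1$; admissibility condition~(ii) is what
guarantees that the supremum of $|x'\circ\Phi|$ over $B_{L^1(\mu)}$ is
actually approached along the sphere, so that the lemma from
\cite{polL1} can be applied to it. Everything else is a direct
translation of the arguments already used for Lemmas~\ref{la1}
and~\ref{la1-2}.
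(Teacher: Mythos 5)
Your proposal is correct and follows essentially the same route as the paper: reduce to Lemma~\ref{la1} by using admissibility condition~(i) to pass from a $\delta'$-small support of $z$ to a $\delta$-small support of $\Phi(z)$, and apply Lemma~2.5 of \cite{polL1} to the weakly sequentially continuous norm one function $x'\circ\Phi$ to produce the required simple function. Your additional remark on the role of admissibility condition~(ii) is a reasonable gloss but does not change the argument.
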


\begin{proof}
We use Lemma~\ref{la1}. Let $\varepsilon, \delta >0$ and $p \in \mathcal W$.
Note that since $\Phi$ is admissible,  there is a
$\delta' >0 $ such that if
$f\in L^1(\mu)$ and
$\mu(\supp f)<\delta'$, we have that $\mu(\supp {\Phi(f)}) <
\delta$.

 Since $x' \circ \Phi$ is weakly sequentially continuous, by Lemma 2.5
 in \cite{polL1}, we can find a norm one simple function $z$ such that
 $\mu(\supp z) < \delta'$ and
$|x'(\Phi(z))| > 1 - \varepsilon$.
Finally, notice that  we also have that $\mu(\supp \Phi(z)) < \delta$,
by the admissibility of $\Phi$.
Lemma~\ref{la1} gives the result.
\end{proof}


\end{document}